\newtheorem{theorem}{Theorem}[section]
\newtheorem{lemma}[theorem]{Lemma}
\newtheorem{corollary}[theorem]{Corollary}
\newenvironment{proof}[1][Proof]{\begin{trivlist}
		\item[\hskip \labelsep {\bfseries #1}]}{\end{trivlist}}
\newenvironment{remark}[1][Remark]{\begin{trivlist}
		\item[\hskip \labelsep {\bfseries #1}]}{\end{trivlist}}
\newcommand{\bea}{\begin{eqnarray*}}
	\newcommand{\eea}{\end{eqnarray*}}
\newcommand{\bean}{\begin{eqnarray}}
\newcommand{\eean}{\end{eqnarray}}
\newcommand{\V}{{\rm Var}}
\newcommand{\sg}{\Sigma}
\newcommand{\what}{\widehat}
\newcommand{\Eaa}{\bbE_{\theta_0,\eta_0}} 
\newcommand{\lra}{\longrightarrow}
\newcommand{\calF}{\mathcal{F}}
\newcommand{\calG}{\mathcal{G}}
\newcommand{\calH}{\mathcal{H}}
\newcommand{\calL}{\mathcal{L}}
\newcommand{\calM}{\mathcal{M}}
\newcommand{\calP}{\mathcal{P}}
\newcommand{\calS}{\mathcal{S}}
\newcommand{\bbP}{\mathbb{P}} 
\newcommand{\bbR}{\mathbb{R}}
\newcommand{\bbE}{\mathbb{E}}
\newcommand{\bbG}{\mathbb{G}}
\newcommand*\widebar[1]{%
	\hbox{%
		\vbox{%
			\hrule height 0.5pt 
			\kern0.5ex
			\hbox{%
				\kern-0.1em
				\ensuremath{#1}%
				\kern-0.1em
			}%
		}%
	}%
} 
\begin{document}

	\title{Bayesian High-dimensional Semi-parametric Inference beyond sub-Gaussian Errors}
	\author[1]{Kyoungjae Lee}
	\author[2]{Minwoo Chae}
	\author[3]{Lizhen Lin}
	\affil[1]{Department of Statistics, Inha University}
	\affil[2]{Department of Industrial and Management Engineering, Pohang University of Science and Technology}
	\affil[3]{Department of Applied and Computational Mathematics and Statistics, The University of Notre Dame}

\maketitle

\begin{abstract}
We consider a sparse linear regression model with unknown symmetric error under the high-dimensional setting.
The true error distribution is assumed to belong to the locally $\beta$-H\"{o}lder class with an exponentially decreasing tail, which does not need to be sub-Gaussian. 
We obtain posterior convergence rates of the regression coefficient and the error density, which are nearly optimal and adaptive to the unknown sparsity level. 
Furthermore, we derive the semi-parametric Bernstein-von Mises (BvM) theorem to characterize asymptotic shape of the marginal posterior for regression coefficients.
Under the sub-Gaussianity assumption on the true score function, strong model selection consistency for regression coefficients are also obtained, which eventually asserts the frequentist's validity of credible sets.
\end{abstract}

Key words: High-dimensional semi-parametric model; posterior convergence rate; Bernstein-von Mises theorem; strong model selection consistency

\section{Introduction}
We consider the linear regression model
\bean\label{model}
Y  &=& X \theta + \epsilon , 
\eean
where $Y =(Y_1,\ldots,Y_n)^T \in \bbR^n$ is a vector of response variables, $X= (x_{ij})\in \bbR^{n\times p}$ is the $n\times p$ matrix of covariates whose $i$-th row is $x_i^T = (x_{i1},\ldots, x_{ip})$, $\theta \in \bbR^p$ is the $p$-dimensional regression coefficient and  $\epsilon =(\epsilon_1,\ldots, \epsilon_n) \in \bbR^n$ is the vector of random errors with $\epsilon_i \overset{i.i.d.}{\sim} \eta$ for $i=1,\ldots,n$.
Statistical inference with the model \eqref{model} in high-dimensional settings has received increasing attention in recent years.
For the estimability of  $\theta$ under large $p$, certain sparsity condition is often imposed which assumes most components of $\theta$ are nearly zero.
Under the sparsity assumption, regularization methods have been at the center of statistical research due to their computational tractability, ease of interpretation, elegant theory and good performance in practice.
Some pioneering references include \cite{tibshirani1996regression}, \cite{fan2001variable}, \cite{tibshirani2005sparsity}, \cite{zou2005regularization}, \cite{zou2006adaptive}, \cite{candes2007dantzig} and \cite{zhang2014confidence}.
We also refer to  the monograph \cite{buhlmann2011statistics} for reviews with abundant examples.

In a Bayesian framework, the sparsity can be expressed through a prior on $\theta$ for which
there are two well-known classes: spike-and-slab and continuous shrinkage priors.
The former has been considered as the gold standard for sparse priors supported by rich theory, see \cite{castillo2015bayesian}, \cite{rovckova2016spike}, \cite{martin2017empirical} and \cite{chae2019bayesian}.
Continuous shrinkage priors have been developed as computationally efficient alternatives of spike-and-slab prior, see \cite{polson2010shrink}, \cite{carvalho2010horseshoe}, \cite{armagan2013generalized}, \cite{armagan2013posterior} and \cite{bhattacharya2015dirichlet}.

With regard to the high-dimensional regression model \eqref{model}, there are three fundamental problems attracting statistical interest: i) recovery of $\theta$; ii) selection of nonzero coefficients; and iii) quantifying the uncertainty of inference.
Note that even for Bayesian methods, it is  common  to analyse the performance of those methods from a frequentist's perspective by assuming a true data-generating distribution.
Under the assumption that errors are i.i.d.\ from the standard Gaussian, \cite{castillo2015bayesian} investigated the posterior convergence rate, strong model selection consistency and Bernstein-von Mises (BvM) theorem.
Slightly different sets of conditions and priors also lead to similar results, see \cite{shin2015scalable}, \cite{song2017nearly}, \cite{yang2016computational}, \cite{martin2017empirical}, \cite{yang2017posterior}.
Although some of their results, {\it e.g.} the recovery of $\theta$, tend to be robust to the misspecification of error distribution, Gaussian models have certain limitations; for example, they are vulnerable to outliers.
Some theoretical justification for this can be found in \cite{castillo2015bayesian} and \cite{buhlmann2011statistics}.

Another problem of a misspecified Gaussian model arises in model selection.
It should be noted that the sub-Gaussianity of the score function is a very important condition for consistent model selection, see \cite{kim2016consistent} and \cite{chae2019bayesian}.
Although it is not clear whether this is a necessary condition, empirical results given in \cite{rossell2017tractable} show that a Gaussian model might lead to inconsistency in model selection when true error distributions are heavy-tailed.
There are a few works concerning Bayesian variable selection beyond the Gaussian assumption, which however  often suffered from lack of theory in high-dimensional setting.
See \cite{rossell2017tractable} and references therein for recent advances on Bayesian variable selection without Gaussianity.

Uncertainty quantification, in particular its theoretical justification, is perhaps the most difficult task.
In Bayesian methods, the uncertainty of parameters based on posteriors is typically expressed through a credible set, which has frequentist's validity in a smooth parametric model by the BvM theorem, see \emph{e.g.} \cite{van1998asymptotic}.
Although the BvM theorem cannot be fully extended to high- or infinite-dimensional models, in some models with carefully chosen priors, credible sets can provide valid confidence satisfying certain frequentist's criteria of optimality, often called as non- or semi-parametric BvM theorem, see \cite{castillo2013nonparametric}, \cite{castillo2014bernstein}, \cite{castillo2015bernstein}, \cite{panov2015finite} and \cite{chae2019semiparametric}.
If the model is misspecified, however, the credible set loses the frequentist's validity even in a very simple parametric model \citep{kleijn2012bernstein}.
Some adjusting techniques are known \citep{yang2016posterior}, but they are not  applicable more generally.

In this paper, we study frequentist's property of Bayesian methods for model \eqref{model} by investigating large sample behavior of the posterior distributions.
We assume a symmetric error density $\eta$ rather than assuming a Gaussian error density.
The symmetric assumption might be slightly restrictive in practice, but a good compromise for the theoretical analysis.
In fact, a zero mean or median condition might be more realistic, but without symmetric assumption, uncertainty quantification is challenging in a semi-parametric Bayesian framework. 
%

Asymptotic properties of the posterior distribution in a high-dimensional semi-parametric regression model has been extensively studied in \cite{chae2019bayesian} under a rather strong assumption on $\eta$.
In particular, they assumed that $\eta$ is a mixture of Gaussians with a compactly supported mixing distribution, still falling into a sub-Gaussian framework.
In this paper, we use the result of \cite{shen2013adaptive} to eliminate this strong assumption.
Specifically, the true error density will be assumed to be in a locally $\beta$-H\"{o}lder class with an exponentially decreasing tail.
This is a much weaker assumption than that given in \cite{chae2019bayesian}. 
In particular, the true error density need to be neither a mixture of Gaussians nor sub-Gaussian. 
For the prior, a spike-and-slab and a symmetrized Dirichlet process (DP) mixture priors are imposed on $\theta$ and $\eta$, respectively.
Asymptotic results given in this paper provide reasonable sufficient conditions for the frequentist's validity on i) recovery of $\theta$, ii) variable selection, and iii) uncertainty quantification.

It would be worthwhile to mention some technical contributions of this paper.
First of all, our results allow error densities whose tails are thicker than sub-Gaussian for which well-known concentration bounds such as the Hoeffding's inequality make the proof simpler.
Although the results are limited to exponentially decaying tails, it is highly expected that recent advances on heavy tail distributions \citep{canale2017posterior} are also applicable.
Secondly, we provide simpler proof for posterior convergence rates compared to  that of \cite{chae2019bayesian}.
To derive the posterior convergence rates, they used the misspecified LAN (local asymptotic normality) and some bounded conditions for empirical process, which  turn out to be not necessary using our techniques.

The rest of the paper is organized as follows.
In section \ref{sec:prelim}, we define the model and prior with some preliminary materials.
In section \ref{sec:main}, main results on posterior convergence rates, asymptotic shape and selection  property are presented.
Concluding remarks follow in section \ref{sec:disc}, and technical proofs are given in the Supplementary Material.

\section{Preliminaries}\label{sec:prelim}
\subsection{Notations}\label{subsec:notation}
For any positive sequences $a_n$ and $b_n$, $a_n = o(b_n)$ implies that $a_n/b_n \lra 0$ as $n\to\infty$. 
We denote $a_n \lesssim b_n$, or equivalently $a_n = O(b_n)$, if $a_n \le C b_n$ for all sufficiently large $n$ and some constant $C>0$, which is an absolute constant or at least does not depend on $n$ and $p$.
For any $x \in \bbR$,  $\lfloor x \rfloor$ is the largest integer which is smaller than or equal to $x$.
For any constants $a$ and $b$, we denote $a \vee b$ as the maximum of $a$ and $b$. 
We denote the indicator function for some set $A$ as $I_A(\cdot)$ and $I(\cdot\in A)$.
For any $\theta\in \bbR^p$, the support of $\theta$ is denoted by $S_\theta$, which is the nonzero index of $\theta$, i.e. $S_\theta = \{1\le i \le p: \theta_i \neq 0 \}$. We denote the cardinality of $S_\theta$ as $s_\theta = |S_\theta|$. For any index set $S \subseteq \{1,\ldots,p \}$ and $n\times p$ matrix $X$, let $\theta_S = (\theta_i)_{i\in S} \in \bbR^{|S|}$, $\widetilde{\theta}_S = (\theta_i I(i\in S) )_{1\le i \le p} \in \bbR^p$ and $X_S = (X_j)_{j\in S} \in \bbR^{n\times |S|}$, where $X_j$ is the $j$-th column of $X$.
For any $y\in \bbR$ and density $\eta$, we denote $\ell_\eta(y) = \log \eta(y)$, $\dot{\ell}_\eta(y) = \partial \ell_\eta(y)/\partial y$, $\ddot{\ell}_\eta(y) = \partial^2 \ell_\eta(y)/(\partial y)^2$ and $\dddot{\ell}_\eta(y) = \partial^3 \ell_\eta(y)/(\partial y)^3$, whenever they exist.
Similarly, for any $x \in \bbR^p$ and $\theta \in \bbR^p$, let $\ell_{\theta,\eta}(x,y) = \ell_\eta(y- x^T\theta)$, $\dot{\ell}_{\theta,\eta}(x,y) = \dot{\ell}_\eta(y-x^T\theta)x$ and $\ddot{\ell}_{\theta,\eta}(x,y) = \ddot{\ell}_\eta(y-x^T\theta)x x^T$.
Let $\bbE_{\theta_0,\eta_0}$ be the expectation under $\bbP_{\theta_0, \eta_0}$ and $\bbP_{\theta,\eta}$ be the probability measure corresponding to the model \eqref{model}.
We denote $\bbE_{\eta_0} = \bbE_{0, \eta_0}$   for simplicity of exposition.
For given a sequence of random variables $Y_n$,  $Y_n = o_{P_0}(1)$ means that $Y_n$ converges to zero in $\bbP_{\theta_0,\eta_0}$-probability as $n\to\infty$.
For given a real function $f:\bbR^p \times \bbR \mapsto \bbR$ and the data $D_n = ((Y_i,x_i))_{i=1}^n$ from the model \eqref{model},
we define $L_n(\theta,\eta) = \sum_{i=1}^n \ell_{\theta,\eta}(x_i,Y_i)$, $R_n(\theta,\eta) = \prod_{i=1}^n \eta(Y_i-x_i^T\theta)/\eta_0(Y_i-x_i^T\theta_0)$,
\bea
\bbP_n f &=& \frac{1}{n} \sum_{i=1}^n f(x_i, Y_i),   \\
\bbG_n f &=& \frac{1}{\sqrt{n}} \sum_{i=1}^n \left\{ f(x_i, Y_i) - \bbE_{\theta_0,\eta_0} \left[f(x_i,Y_i)\right]  \right\} \,\text{ and} \\
V_{n,\eta} &=& \frac{1}{n} \sum_{i=1}^n\bbE_{\theta_0,\eta_0}\left[ \dot{\ell}_{\theta_0,\eta} \dot{\ell}^T_{\theta_0,\eta_0}(x_i,Y_i) \right] .
\eea
Note that $V_{n,\eta} = \nu_\eta \sg$, where $\nu_\eta =  \bbE_{\eta_0}(\dot{\ell}_\eta \dot{\ell}_{\eta_0} )$, $\sg = n^{-1} X^T X$.
Let $N_{n,\eta,S}$ be the $|S|$-dimensional normal distribution with mean $V_{n,\eta,S}^{-1} G_{n,\eta,S}$ and variance $V_{n,\eta,S}^{-1}$, where $G_{n,\eta,S}$ is the $|S|$-dimensional projection of $\bbG_n \dot{\ell}_{\theta_0,\eta}$, $V_{n,\eta,S}= \nu_\eta \sg_S$ and $\sg_S = n^{-1}X_S^T X_S$.
For simplicity, we denote $G_{n,\eta_0,S}, V_{n,\eta_0,S}$ and $N_{n,\eta_0,S}$ as $G_{n,S}, V_{n,S}$ and $N_{n,S}$, respectively.
For given positive real numbers $a$ and $b$, we denote $IG(a,b)$ as an inverse gamma distribution whose shape and scale parameters are $a$ and $b$, respectively. 
For given positive integer $p$, $\mu_0 \in \bbR^p$ and $p\times p$ positive definite matrix $\sg_0$, we denote $N_p(\mu_0,\sg_0)$ as a $p$-dimensional normal distribution with mean $\mu_0$ and covariance matrix $\sg_0$.

For any $\theta\in \bbR^p$, denote the vector $\ell_q$-norm as $\|\theta\|_q := \left(\sum_{j=1}^p |\theta_i|^q  \right)^{1/q}.$  
For any pair of densities $\eta_1$ and $\eta_2$ with respect to a probability measure $\mu$, define the total variation and Hellinger distance as $d_V(\eta_1,\eta_2) := \int|\eta_1-\eta_2| d\mu$ and $d_H^2(\eta_1, \eta_2) := \int (\sqrt{\eta_1} - \sqrt{\eta_2})^2 d\mu$, respectively. 
For any pairs of vectors $\theta^1,\theta^2 \in \bbR^p$  and densities $\eta_1,\eta_2$, we define the mean Hellinger distance as 
\bea
d_n^2\left( (\theta^1,\eta_1) , (\theta^2,\eta_2) \right) &=& \frac{1}{n} \sum_{i=1}^n d_H^2(p_{\theta^1,\eta_1,i}, p_{\theta^2,\eta_2,i} ),
\eea
where $p_{\theta,\eta,i}(y) = \eta(y- x_i^T \theta)$.

\subsection{Prior}

As mentioned earlier, we consider the following model
\bea
Y_i &=& x_i^T \theta + \epsilon_i , \\
\epsilon_i &\overset{i.i.d.}{\sim}& \eta , \quad i=1,\ldots,n,
\eea
where $\theta\in\bbR^p$ and $\eta$ is a symmetric density.
We impose prior distributions on $\theta$ and $\eta$ to conduct  Bayesian inference. 
Let $\Theta = \bbR^p$ and $\calH$ be the class of symmetric and continuously differentiable densities equipped with the Hellinger metric. We use a product prior $\Pi = \Pi_\Theta \times \Pi_\calH$ for $(\theta,\eta)$, where $\Pi_\Theta$ and $\Pi_\calH$ are Borel probability measures on $\Theta$ and $\calH$, respectively.

For the prior $\Pi_\Theta$ on the coefficient vector $\theta$, we select (i) the number of nonzero components $s$ from a prior $\pi_p(s)$ on $\{0,\ldots,p\}$, (ii) a random set $S \subseteq \{1,\ldots,p \}$ whose cardinality is $s=|S|$ from the uniform prior, and (iii) the nonzero values $\theta_S$ from a prior $g_S$ on $\bbR^{|S|}$ in turn.
Specifically, we consider the following prior distribution on $(S,\theta)$:
\bea
(S,\theta) &\mapsto& \pi_p(|S|) \, \frac{1}{\binom{p}{|S|}} \, g_S(\theta_S) \, \delta_0 (\theta_{S^c}),
\eea
where $\delta_0$ is the Dirac measure at 0.
This type of prior has been studied by \cite{george2000calibration}, \cite{scott2010bayes}, \cite{castillo2012needles} and \cite{castillo2015bayesian}.
For the prior $\pi_p$ and $g_S$, we assume that
\bean
A_1 p^{-A_3} \pi_p(s-1)  &\le& \pi_p (s) \,\,\le\,\, A_2 p^{-A_4} \pi_p(s-1), \quad s=1, \ldots , p \label{prior_p} \\
g_S(\theta_S) &=& \left(\frac{\lambda}{2}\right)^{|S|} \exp ( - \lambda \|\theta_S\|_1) , \quad \frac{\sqrt{n}}{p} \le \lambda \le \sqrt{n \log p}, \label{prior_gS}
\eean
for some positive constants $A_1,A_2,A_3$ and $A_4$. 
Note that the prior $g_S$ is the product of the Laplace distribution $g(\theta) = \lambda \exp(-\lambda|\theta|)/2$, i.e., $g_S(\theta_S)= \prod_{i\in S}g(\theta_i)$.

For the prior $\Pi_\calH$ on the error density $\eta$, we consider the location mixture of a symmetrized DP, 
\bea
\eta(x) &=& \int \phi_\sigma (x-z) d \widebar{F}(z), \\
F &\sim& DP(\alpha) , \\
\sigma^2 &\sim& G 
\eea
where $\phi_\sigma (x)  := (\sqrt{2\pi} \sigma)^{-1} \exp \{-x^2/(2\sigma^2) \}, \widebar{F} := (F+ F^-)/2, dF^-(z) := dF(-z)$ and $DP(\alpha)$ is the Dirichlet process with a finite positive measure $\alpha$. 
For the base measure $\alpha$ and the prior on $\sigma^2$, we further assume that
\bean
\widebar{\alpha} &\in& {\cal{M}}[ -C'n , C'n  ] , \label{al_n_n}\\
\widebar{\alpha}( [-x, x]^c ) &\le& \exp(-C'' x^{a_1}) \text{ for all sufficiently large } x>0, \label{al_tail}\\
G(\sigma^2 \le x) &\le& \exp(- C'' x^{-a_2} ) \text{ for all sufficiently small } x>0 , \label{G1}\\
G(\sigma^2 \ge x) &\le& x^{-a_3} \text{ for all sufficiently large } x>0 ,\label{G2}\\
G(s < \sigma^{-2} < s(1+t)) &\ge& a_6 s^{a_4} t^{a_5} \exp(-C'' s^{\kappa/2}) \text{ for any $s>0$ and $t\in(0,1)$,}\quad\, \label{G3}
\eean
for some positive constants $a_1,\ldots, a_6, C', C''$ and $\kappa$, where $\widebar{\alpha}=  \alpha /\alpha([-C'n,C'n])$ and $\calM[a,b]$ is the set of  probability measures on $(a,b)$.
We assume that $\widebar{\alpha}$ has a positive density function on $(-C' n, C'n)$.

We need additional assumptions to achieve a distributional approximation and model selection consistency.
Specifically, we assume that 
\bean
\widebar{\alpha} &\in& \calM[-C'(\log n)^{\frac{2}{\tau}}, C'(\log n)^{\frac{2}{\tau}} ] , \label{al1} \\
G &\in& \calM[0, C' \log n], \label{G4}
\eean
and  $\widebar{\alpha}=  \alpha /\alpha([-C'(\log n)^{\frac{2}{\tau}}, C'(\log n)^{\frac{2}{\tau}} ])$ has a positive density function on $(-C'(\log n)^{\frac{2}{\tau}}, C'(\log n)^{\frac{2}{\tau}} )$, 
where $\tau>0$ will be used to define true parameter class (condition \hyperref[D2]{(D2)}) in section \ref{subsec:true den}.

\begin{remark}
	The above prior conditions are mild which include popular prior choices.
	If we choose $\widebar{\alpha}$ as a truncated normal distribution on interval $[-n,n]$, conditions \eqref{al_n_n} and \eqref{al_tail} are satisfied with $a_1=2$.
	If we consider $\sigma^{m_0}\sim IG(a_0,b_0)$ for some positive constants $a_0$, $b_0$ and $m_0$,  conditions \eqref{G1}-\eqref{G3} are satisfied with $a_2= m_0/2$ and $\kappa=m_0$.
	For conditions \eqref{al1} and \eqref{G4}, it suffices to consider the truncated normal and inverse-gamma distribution on $(-C'(\log n)^{\frac{2}{\tau}}, C'(\log n)^{\frac{2}{\tau}} )$ and $(0, C'\log n)$, respectively, for some large constant $C'>0$.
	As $n$ grows to infinity, the above supports in \eqref{al1} and \eqref{G4} are getting close to the whole supports, $\bbR$ and $\bbR^+ = (0, \infty)$.
\end{remark}

\subsection{True Parameter Class}\label{subsec:true den}

We focus on the ``large $p$ and small $n$'' setting, i.e. $p\ge n$, throughout the paper.
We assume $\theta_0 \in \bbR^p$ to be a $s_0$-sparse vector, which means the number of nonzero elements of $\theta_0$ is equal to $s_0$. The support of $\theta_0$ is denoted by $S_0 = S_{\theta_0}$.  
Further conditions on the sparsity $s_0$ and the magnitude of $\theta_0$ will be introduce in the main theorems in section \ref{sec:main}.
We introduce here conditions \hyperref[D1]{(D1)}-\hyperref[D5]{(D5)} for the true error density $\eta_0$:  
\begin{itemize}
	\item[(D1)]\label{D1} (Locally $\beta$-H\"{o}lder class) for given positive constants $\beta$, $\tau_0$ and a real-valued function $L$, $\eta_0 \in {\cal{C}}^{\beta,L,\tau_0}(\bbR)$ where ${\cal{C}}^{\beta,L,\tau_0}(\bbR)$ is the class of every density $\eta$ whose $k$th order derivative $\eta^{(k)}$ exists up to $k \le \lfloor \beta \rfloor$ and for $k_1 = \lfloor \beta \rfloor$, 
	\bea
	\left| \eta^{(k_1)}(x+y) - \eta^{(k_1)}(x)  \right| &\le& L(x) \exp (\tau_0 y^2) |y|^{\beta - \lfloor \beta \rfloor} ,\quad \forall x,y \in \bbR.
	\eea
	\item[(D2)]\label{D2} (Light tail) There exist positive constants $a,b$ and $\tau$ such that
	\bea
	\eta_0(x) &\le& \exp(- b |x|^\tau) , \quad |x| >a.
	\eea
	\item[(D3)]\label{D3} There exists a constant $\upsilon > 0$ such that $\bbE_{\eta_0} \left( |\eta_0^{(k)} |/\eta_0 \right)^{(2\beta+\upsilon)/k}< \infty$ and $\bbE_{\eta_0} \left( L/\eta_0 \right)^{(2\beta+\upsilon)/\beta} < \infty$ for $1\le k\le \lfloor \beta \rfloor$.
	\item[(D4)]\label{D4} (Symmetry) $\eta_0(x) = \eta_0(-x)$ and $\eta_0(x)>0$ for all $x \in \bbR$.
	\item[(D5)]\label{D5} there exist positive constants $\gamma_1,\gamma_2,\gamma_3,b', C_{\eta_0}$ and $\tau' <\tau$ such that for any $y\in\bbR$, 
	\bean
	| \dot{\ell}_{\eta_0}(y)| &\le&  C_{\eta_0}(|y|^{\gamma_1} + 1), \label{eta0_1} \\
	| \ddot{\ell}_{\eta_0}(y)| &\le&  C_{\eta_0}(|y|^{\gamma_2} + 1), \label{eta0_2} \\
	| \dddot{\ell}_{\eta_0}(y)| &\le&  C_{\eta_0}(|y|^{\gamma_3} + 1), \label{eta0_3} 
	\eean
	and, for any small $|x|$,
	\bean
	\frac{\eta_0(y+x)}{\eta_0(y)} &\le& C_{\eta_0} e^{b' |y|^{\tau'}}. \label{eta0_4}
	\eean
\end{itemize}

Now we describe the above conditions in more details.
Condition \hyperref[D1]{(D1)}, locally $\beta$-H\"{o}lder class, has been extensively studied in \cite{kruijer2010adaptive}, \cite{shen2013adaptive}, \cite{canale2017posterior} and \cite{bochkina2017adaptive}.
This class is much more general than the H\"{o}lder class because it only requires the local smoothness by adopting $L(x)$ instead of a constant $L>0$.
Furthermore,  due to condition \hyperref[D2]{(D2)}, it is essentially weaker than the condition in \cite{kruijer2010adaptive}, which assumes $\log \eta_0 \in {\cal{C}}^{\beta,L,\tau_0}(\bbR)$ \citep{shen2013adaptive}.

Condition \hyperref[D2]{(D2)} ensures that the true density has an exponentially light tail.
It is mainly required to prove the prior thickness condition for the density part and use the Hanson-Wright inequality for the strong model selection consistency.
The technical details for the former issue can be found in \cite{shen2013adaptive} (Lemma 2, Theorem 3 and Proposition 1).
Recently, \cite{bochkina2017adaptive} adopted much weaker tail condition, 
$\int_x^\infty y^2 \eta_0(y) dy  \le C(1+x)^{-\tau}$ for some constants $C>0$ and $\tau>0$, which includes some polynomially decreasing tail densities.
However, they considered only the densities on $\bbR^+$, and it is unclear whether their techniques are applicable to the densities on $\bbR$.

Condition \hyperref[D3]{(D3)} is needed for the prior thickness condition for the density \citep{shen2013adaptive} and implicitly controls the tail behavior of $\eta_0$.
It has been commonly used in literature including \cite{kruijer2010adaptive}, \cite{shen2013adaptive} and \cite{bochkina2017adaptive}.

Condition \hyperref[D4]{(D4)} is not needed for proving the optimal convergence results (Theorems \ref{thm:dimupper} and \ref{thm:convrate_mH}).
However, the symmetric assumption will play an important role in proving the BvM theorem (Theorem \ref{thm:BvM}).
Based on current techniques in this paper, this assumption is also needed to prove Corollary \ref{cor:convrate_H} and Corollary \ref{cor:convrate_coef}, although we suspect that this can be weakened.



Condition \hyperref[D5]{(D5)} is required only for the BvM theorem and selection consistency results.
This condition is closely related to the tail of $\eta_0$ and satisfied for a wide range of densities. 
For example, if $\eta_0(y) \propto \exp(-a|y|^b)$ for some constants $a,b >0$ and every large enough $|y|$, condition \hyperref[D5]{(D5)} is met.
In fact, it holds unless $\eta_0$ has an extremely thin tail.

\subsection{Design Matrix}
We consider a fixed design matrix $X\in \bbR^{n\times p}$ and assume that every element of the design matrix is bounded by $\sqrt{\log p}$ up to some constant, i.e. $\sup_{i,j}|x_{ij}| \le M \sqrt{\log p}$ for some constant $M>0$. 
The upper bound for entries of the design matrix is introduced due to technical reasons, and some recent works \citep{narisetty2019skinny, song2017nearly} on high-dimensional Bayesian inference also used similar conditions.
In this paper, we require this condition mainly to (i) derive the posterior convergence rate for $\|X(\theta-\theta_0)\|_2$ using Corollary 3.2 of \cite{chae2019bayesian} and (ii) obtain upper bounds for $\dot{\ell}_{\theta,\eta}(x,y)$ (or its derivatives) based on $\dot{\ell}_{\eta}(y)$ (or its derivatives).

In high-dimensional linear regression model \eqref{model}, certain {\it regularity conditions} have been imposed on the design matrix $X$ for the estimability of $\theta$. 
In this paper, we define the {\it uniform compatibility number} by 
\bea
\phi^2(s) &=& \inf\left\{ s_\theta \cdot\frac{ \theta^T \sg \theta}{\|\theta\|_1^2} :\,\, \theta\in\bbR^p,\,\, 0< s_\theta \le s \right\}
\eea
and the {\it restricted eigenvalue} by
\bea
\psi^2(s) &=& \inf \left\{  \frac{\theta^T \sg \theta}{\|\theta\|_2^2} :\,\, \theta\in\bbR^p,\,\, 0< s_\theta \le s \right\}
\eea
for any $1\le s \le p$, where $\sg = n^{-1}X^T X$, $s_\theta=|S_\theta|$ and $S_\theta$ is the support of $\theta$. 
These quantities have been commonly used in literature \citep{van2009conditions,bickel2009simultaneous,castillo2015bayesian}, and the bounded below conditions have been introduced for  consistent estimation.
Note that the infimum, which is used to define compatibility number (or restricted eigenvalue), is often taken over all $S\subseteq\{1,\ldots,p\}$ and $\theta \in\bbR^p$ such that $\|\theta_{S^c}\|_1  \le c\|\theta_S\|_1$ for some constant $c>0$.
However, our definitions for $\phi^2(s)$ and $\psi^2(s)$ focus on sparse vectors $\theta$ such that $0<s_\theta \le s$.
For example, \cite{castillo2015bayesian} uses similar definitions.  
Bounded below assumption on $\psi(s)$ is required for the convergence rate under $\ell_2$ norm and BvM result, while the same assumption on $\phi(s)$ is required for the convergence rate under $\ell_1$ norm.
If the restricted eigenvalue $\psi^2(s)$ is bounded away from zero, it implies that $\sg_S$ is positive definite for any $|S|= s$. 
Note that $\psi(s) \le \phi(s)$ because $\|\theta\|_1^2 \le s_\theta \|\theta\|_2^2$ by the Cauchy-Schwartz inequality.   
Thus, the restricted eigenvalue conditions is stronger than the uniform compatibility number condition. 

Because $p \ge n$, if we consider, for example,  a random design matrix $X= (x_{ij})$, where $x_{ij}$'s are random samples from the standard normal, $\sup_{i,j}|x_{ij}| \le M \sqrt{\log p}$ and the restricted eigenvalue condition are met with high probability tending to 1 as $p\to\infty$.
Furthermore, by Lemma 6.1 in \cite{narisetty2014bayesian}, these conditions are also satisfied with high probability tending to 1 if the rows of $X$ are independent isometric sub-Gaussian random vectors.

\section{Main Results}\label{sec:main}
\subsection{Posterior Convergence Rates}\label{subsec:conv}
The first theorem is about the model dimension which states that the posterior distribution puts most of its mass on moderately small dimensional models.
We denote the posterior distribution based on $D_n$ as $\Pi(\cdot \mid D_n)$.

\begin{theorem}\label{thm:dimupper}
	Assume that conditions \eqref{prior_p}-\eqref{G3} hold, $\lambda\|\theta_0\|_1=O(s_0\log p)$  and $\log p \le n^2$. Then, for any $\eta_0$ satisfying \hyperref[D1]{(D1)}-\hyperref[D4]{(D4)}, there exists a constant $K_{\rm dim}>1$ not depending on $n$ and $p$ such that
	\bea
	\Eaa \Pi \left( s_\theta > K_{\rm dim} \big\{ s_0 \vee  n^{\kappa^*/(2\beta +\kappa^*)} (\log n)^{2t -1}  \big\}   \mid D_n \right) &=& o(1)
	\eea
	where $\kappa^*:= (\kappa \vee 1)$ and $t > \{ \kappa^*(1+ \tau^{-1}+ \beta^{-1}) + 1 \}/(2+ \kappa^* \beta^{-1})$.
\end{theorem}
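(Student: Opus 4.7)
The plan is to execute a classical Ghosal--van der Vaart ``test plus prior-mass'' scheme, specialized to dimension control, in the same spirit as Theorem 12 of \cite{castillo2015bayesian} and Theorem 3.1 of \cite{chae2019bayesian}, but with the sub-Gaussian density machinery of the latter replaced by the DPM results of \cite{shen2013adaptive}. Write $\epsilon_n^2 := n^{-2\beta/(2\beta+\kappa^*)}(\log n)^{2t} + s_0(\log p)/n$, so the claimed dimension bound is essentially $K_{\rm dim}\cdot n\epsilon_n^2/\log n$. The first step is an evidence lower bound: on an event of $\bbP_{\theta_0,\eta_0}$-probability $1-o(1)$, one shows $\int R_n(\theta,\eta)\,d\Pi \ge \exp(-C_1 n\epsilon_n^2)$. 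The density side uses Proposition 2 / Theorem 3 of \cite{shen2013adaptive} under \hyperref[D1]{(D1)}--\hyperref[D3]{(D3)} to lower-bound the prior mass on a KL ball by $\exp(-c n\epsilon_n^2)$; the coefficient side follows from the spike-and-slab structure: the uniform choice of $S_0$ contributes $\binom{p}{s_0}^{-1}\pi_p(s_0)\ge \exp(-c s_0\log p)$, the Laplace slabs on $\theta_{S_0}$ give density $(\lambda/2)^{s_0}\exp(-\lambda\|\theta_0\|_1)$ at $\theta_0$, and by the hypothesis $\lambda\|\theta_0\|_1=O(s_0\log p)$ the combined local mass is at least $\exp(-c' s_0\log p)$.

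Next I would set up a product sieve $\mathcal{F}_n=\mathcal{F}_{n,\theta}\times\mathcal{F}_{n,\eta}$. For the coefficients, take $\mathcal{F}_{n,\theta}=\{\theta:s_\theta\le K_{\rm dim} s_n,\ \|\theta\|_\infty\le M_n\}$ with $M_n$ polynomial in $n$; for the densities, take the DPM sieve of \cite{shen2013adaptive} (bounded number of atoms, bounded atom range, bounded $\sigma^{-2}$). The prior tail hypotheses \eqref{al_tail}, \eqref{G1}--\eqref{G2} and the Laplace tail yield $\Pi(\mathcal{F}_n^c)\le\exp(-(C_1+4)n\epsilon_n^2)$. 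On the sieve, exponentially powerful Hellinger-type tests can be built as in Corollary 3.2 of \cite{chae2019bayesian}: the log-entropy of $\{\theta:s_\theta=k\}\cap\mathcal{F}_n$ in the mean Hellinger distance $d_n$ is of order $k\log p$ plus the density-side entropy, which is polylogarithmic in $n$ by \cite{shen2013adaptive} and hence negligible compared to $K_{\rm dim}s_n\log p$ for $t$ chosen as in the statement.

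The dimension control itself rests on the exponential prior penalty \eqref{prior_p}: iterating gives $\pi_p(k)\le(A_2 p^{-A_4})^{k-s_0}\pi_p(s_0)$. Fixing $k>K_{\rm dim}s_n$ and decomposing $\{s_\theta=k\}\cap\mathcal{F}_n$ into a minimal $d_n$-net of the required size, the test-plus-numerator bound yields, after integrating out the Laplace slab against the localization constraint $\|\theta\|_\infty\le M_n$, a contribution of order $\exp\{c_2 k\log p - A_4 k\log p\}$; taking $K_{\rm dim}$ large so that the constants $c_2$ absorbed from entropy and slab integration are dominated by $A_4$, and summing over $k>K_{\rm dim}s_n$, one gets total contribution $\exp(-c_3 K_{\rm dim}s_n\log p)$. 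Dividing by the evidence lower bound and using $s_n\log p \gtrsim n\epsilon_n^2$ closes the argument.

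The main obstacle I expect is the interplay between the two parts of the prior on very high-dimensional slices. On $\{s_\theta=k\}$ with $k\gg s_0$ the restricted eigenvalue $\psi^2(k)$ is not controlled, so Hellinger-type bounds cannot be converted into $\ell_2$ bounds on $\theta-\theta_0$; one must work throughout with the mean Hellinger distance $d_n$ and absorb all design-dependent factors through $\sup_{i,j}|x_{ij}|\le M\sqrt{\log p}$ combined with $\|\theta\|_\infty\le M_n$ on the sieve. A related subtlety is calibrating the exponent $t$ so that the polylogarithmic entropy contribution from the symmetrized DPM sieve is genuinely negligible compared to $n\epsilon_n^2$; this is exactly where the condition $t>\{\kappa^*(1+\tau^{-1}+\beta^{-1})+1\}/(2+\kappa^*\beta^{-1})$ enters, by matching the extra $(\log n)$ powers in the density covering numbers of \cite{shen2013adaptive} against the target rate.
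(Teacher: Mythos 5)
Your evidence lower bound step matches the paper's Lemma \ref{lemma:dem_lb} in spirit: restrict to a KL-type neighborhood using \cite{shen2013adaptive} for the density side, and use $\binom{p}{s_0}^{-1}\pi_p(s_0)$ together with the Laplace density at $\theta_0$ (plus the hypothesis $\lambda\|\theta_0\|_1=O(s_0\log p)$) for the coefficient side. So far so good.

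Where the plan goes wrong is the dimension-control step, which in your sketch hinges on building tests on a sieve and matching a net-entropy term $c_2 k\log p$ against the prior penalty $-A_4 k\log p$. Two problems. First, the proposed decomposition is incoherent: you define $\mathcal{F}_{n,\theta}=\{\theta:s_\theta\le K_{\rm dim}s_n,\ \|\theta\|_\infty\le M_n\}$, and then for $k>K_{\rm dim}s_n$ look at ``$\{s_\theta=k\}\cap\mathcal{F}_n$,'' which is empty by construction. The event you need to control, $\{s_\theta>K_{\rm dim}s_n\}$, lives entirely off your sieve, so a test built on the sieve says nothing about it. Second, even ignoring that, if the bound genuinely has the form $\exp\{(c_2-A_4)k\log p\}$ with $c_2$ a per-$k$ entropy constant, you would need $A_4>c_2$, which is a restriction on the prior that the theorem statement does not impose; increasing $K_{\rm dim}$ (which only shifts the lower summation limit $k>K_{\rm dim}s_n$) cannot make $(c_2-A_4)<0$.

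The paper's proof of Theorem \ref{thm:dimupper} is test-free and strictly simpler. With $B=\{s_\theta\ge R\}$ and $E_n$ the evidence event of Lemma \ref{lemma:dem_lb}, the key fact is $\bbE_{\theta_0,\eta_0}R_n(\theta,\eta)=1$ for every $(\theta,\eta)$, so by Fubini $\bbE_{\theta_0,\eta_0}\int_B R_n\,d\Pi = \Pi(B)$. On $E_n$ the denominator is lower-bounded, hence
\begin{equation*}
\Eaa\Pi(B\mid D_n) \;\le\; \exp\bigl[C_{\rm lower}\{-\log\pi_p(s_0)+s_0\log p+\lambda\|\theta_0\|_1+n\tilde\epsilon_n^2\}\bigr]\cdot\Pi(B)\;+\;o(1),
\end{equation*}
and $\Pi(B)\le\exp\{-(R-s_0)\tfrac{A_4}{2}\log p\}$ by iterating \eqref{prior_p}. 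Since the evidence penalty on the right scales like $C_{\rm lower}(A_3+2+C_\lambda)\,s_n\log p$, taking $R=K_{\rm dim}s_n$ with $K_{\rm dim}>1+2A_4^{-1}C_{\rm lower}(A_3+2+C_\lambda)$ makes the exponent negative and proportional to $s_n\log p$, yielding $o(1)$. No sieve, no entropy, no tests are needed for dimension control; all of that machinery is deferred to Theorem \ref{thm:convrate_mH}. You should replace your test-based step with this Fubini/prior-mass argument.
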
 

Since we use a Laplace prior for nonzero coefficients, the condition $\lambda\|\theta_0\|_1 = O(s_0\log p)$ might seem to a bit restrictive.
Note that this condition can be avoided in Gaussian models by utlizing explicit form of the log-likelihood, see \cite{castillo2015bayesian}, \cite{van2016conditions} and \cite{gao2015general}.
To use the same technique in our semi-parametric model, quadratic approximation of the log-likelihood should be preceded, for which empirical process techniques can be applied.
However,  quadratic approximation is highly difficult when models have many nonzero coefficients.
Therefore, the proof of Theorem \ref{thm:dimupper} heavily relies on the prior, requiring an additional condition $\lambda\|\theta_0\|_1 = O(s_0\log p)$.
An empirical Bayes approach proposed in \cite{martin2017empirical} might be an alternative way to relax this condition.
However, the choice of the least squared estimators as the center of the prior may yield another problems when errors have heavier tails than the sub-Gaussian tail.
Since we believe the condition $\lambda\|\theta_0\|_1 = O(s_0\log p)$ is not too restrictive under the large $\lambda$ regime, we leave the problem of relaxing this condition as future work.


For a given $t > \{ \kappa^*(1+ \tau^{-1}+ \beta^{-1}) + 1 \}/(2+ \kappa^* \beta^{-1})$, let $s_n := 2 K_{\rm dim} \{ s_0 \vee n^{\kappa^*/(2\beta + \kappa^*)} (\log n)^{2t -1}  \}$.
Theorem \ref{thm:dimupper} effectively reduces the meaningful parameter space when $s_n$ is not too big and makes the theoretical development easier.
Theorem \ref{thm:convrate_mH} describes a result on posterior convergence rate under the mean Hellinger distance. 
The obtained rate has the term $s_n$ defined above, where $s_0$ and $n^{\kappa^*/(2\beta + \kappa^*)} (\log n)^{2t -1}$ come from the coefficient and density estimation, respectively.

\begin{theorem}\label{thm:convrate_mH}
	Assume that conditions \eqref{prior_p}-\eqref{G3} hold, $\lambda\|\theta_0\|_1 = O(s_0 \log p)$ and $s_n  \log p =o(n)$. Then, for any $\eta_0$ satisfying \hyperref[D1]{(D1)}-\hyperref[D4]{(D4)},
	\bea
	\bbE_{\theta_0,\eta_0} \Pi \left( d_n \big( (\theta,\eta), (\theta_0,\eta_0) \big) > K_{\rm Hel} \sqrt{\frac{s_n \log p }{n} }  \,\,\bigg|\,\, D_n\right) &=& o(1), 
	\eea
	for some constant $K_{\rm Hel}>0$  not depending on $n$ and $p$.
\end{theorem}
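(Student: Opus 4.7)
The plan is to apply the Ghosal--Ghosh--van der Vaart framework for posterior convergence, exploiting Theorem \ref{thm:dimupper} to restrict to models with $s_\theta \le s_n$. With $\epsilon_n = \sqrt{s_n\log p /n}$, it suffices, modulo an $o(1)$ term from Theorem \ref{thm:dimupper}, to bound the posterior mass of $\{d_n((\theta,\eta),(\theta_0,\eta_0)) > K_{\rm Hel}\epsilon_n,\; s_\theta \le s_n\}$. This reduces the effective parameter space, which makes all standard arguments (prior thickness, entropy, tests) take the usual form once the dimension is tamed.

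For the \emph{prior thickness} condition, I would show that a KL-type neighborhood $B_n(\theta_0,\eta_0;\epsilon_n)$ receives prior mass at least $\exp(-c n\epsilon_n^2)$. Decompose $(\theta,\eta) \mapsto \eta$ and $(\theta,\eta)\mapsto \theta$ independently under the product prior. On the $\eta$-side, the symmetrized DP-mixture prior together with \hyperref[D1]{(D1)}--\hyperref[D3]{(D3)} gives, via Theorem 3 of \cite{shen2013adaptive}, a lower bound of $\exp(-c n\epsilon_{n,\eta}^2)$ for $\epsilon_{n,\eta}\asymp n^{-\beta/(2\beta+\kappa^*)}(\log n)^{t}$, which contributes $n^{\kappa^*/(2\beta+\kappa^*)}(\log n)^{2t}\lesssim n\epsilon_n^2$. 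On the $\theta$-side, conditioning on $S=S_0$ yields at least $\pi_p(s_0)\binom{p}{s_0}^{-1}(\lambda/2)^{s_0}\int_{\|\theta_S-\theta_{0,S}\|_\infty \le \delta_n} e^{-\lambda\|\theta_S\|_1}d\theta_S$; using \eqref{prior_p}, $\lambda \le \sqrt{n\log p}$, and the hypothesis $\lambda\|\theta_0\|_1 = O(s_0\log p)$, this gives $\exp(-c s_0 \log p)$, absorbed into $\exp(-cn\epsilon_n^2)$. The KL divergence itself is controlled by
\begin{equation*}
K(p_{\theta_0,\eta_0,i},p_{\theta,\eta,i}) \;\lesssim\; K(\eta_0,\eta) + (x_i^T(\theta-\theta_0))^2\, \bbE_{\eta_0}\dot{\ell}_\eta^2,
\end{equation*}
together with $|x_i^T(\theta-\theta_0)| \lesssim \sqrt{\log p}\,\|\theta-\theta_0\|_1$; this matches the required rate provided $\delta_n\asymp \epsilon_n/(\sqrt{s_0\log p})$ and $\bbE_{\eta_0}\dot{\ell}_\eta^2$ is bounded on the local neighborhood of $\eta_0$ (standard consequence of \hyperref[D3]{(D3)}).

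For the \emph{sieve}, take $\mathcal{F}_n=\{(\theta,\eta): s_\theta\le s_n,\ \|\theta\|_\infty \le M_n,\ \eta \in \mathcal{E}_n\}$, where $\mathcal{E}_n$ is the Shen--Tokdar--Ghosal sieve of symmetrized Gaussian mixtures with truncated mixing support, truncated $\sigma^2$-range, and a bounded number $N_n$ of component atoms. The entropy decomposes additively: the $\theta$-part contributes $\log\binom{p}{s_n}+s_n\log(M_n/\epsilon_n)\lesssim s_n\log p = n\epsilon_n^2$, and the $\eta$-part contributes $N_n\log(1/\epsilon_n)\lesssim n\epsilon_{n,\eta}^2 \le n\epsilon_n^2$ by Lemma A.3 of \cite{shen2013adaptive}. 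Prior mass of $\mathcal{F}_n^c$ is shown to be $o(\exp(-(c+2)n\epsilon_n^2))$ via the tail bounds \eqref{al_n_n}--\eqref{G2} on $(\alpha,G)$ and the Laplace tail on $\|\theta\|_\infty$. Tests with the required exponential error bound exist because $d_n$ dominates the Hellinger distance on each component (standard convex-hull construction, e.g.\ Lemma 2 of Ghosal--van der Vaart 2007). Combining the four ingredients through the standard master theorem yields the conclusion.

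The main technical obstacle will be the interplay between the shift $x_i^T(\theta-\theta_0)$ and the smoothness of $\eta$ when controlling both KL divergence and mean Hellinger distance uniformly over the sieve; in particular, the factor $\sqrt{\log p}$ from $\|x_i\|_\infty$ must be absorbed into $\delta_n$, forcing $\delta_n$ to be considerably smaller than $\epsilon_n$, and requires that the neighborhood for $\eta$ simultaneously controls $\bbE_{\eta_0}\dot{\ell}_\eta^2$. A secondary point is that the analogue of the dimension-reduction Theorem \ref{thm:dimupper} is what ultimately allows the $s_n\log p$ entropy bound to be dominated by $n\epsilon_n^2$ with the same $\epsilon_n$; without it the $\log\binom{p}{s_\theta}$ contribution would be unbounded, so the overall argument is genuinely a two-stage one.
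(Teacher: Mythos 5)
Your overall architecture matches the paper's: (i) restrict to $s_\theta\le s_n$ via Theorem~\ref{thm:dimupper}, (ii) use a Shen--Tokdar--Ghosal sieve for $\eta$ and a polynomially sized $\ell_1$-ball for $\theta$, (iii) entropy bounds $\Rightarrow$ tests via Ghosal--van der Vaart (2007, Lemmas~2 and~9), and (iv) a prior-thickness / denominator lower bound step. So the route is the same two-stage Ghosal--Ghosh--van der Vaart argument the paper uses.

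There is, however, one genuine gap in your prior-thickness step. You assert that $\bbE_{\eta_0}\dot{\ell}_\eta^2$ is bounded on a local neighborhood of $\eta_0$ as a ``standard consequence of (D3)'', and use this to take $\delta_n\asymp\epsilon_n/\sqrt{s_0\log p}$ for the $\theta$-ball radius. This claim is false: (D3) controls moments of $\dot\eta_0/\eta_0$, i.e.\ of the \emph{true} density, not of the approximating DP-mixture scores. For a Gaussian-location mixture, $|\dot{\ell}_\eta(y)|\lesssim\sigma^{-2}(|y|+a_n)$, and to reach the adaptive rate the relevant mixtures must have $\sigma$ of order $\tilde\epsilon_n^{1/\beta}$ (up to logs); thus $\sup_{\eta}|\dot\ell_\eta(y)|$ and hence $\bbE_{\eta_0}\dot\ell_\eta^2$ are polynomially large in $n$. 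With your $\delta_n$ the $\theta$-shift contribution to the KL divergence is roughly $\log p\cdot\delta_n^2\cdot\mathrm{poly}(n)$, which is not $\lesssim\epsilon_n^2$. The paper's fix, which you should adopt, is to shrink the $\theta$-ball much further, to $\|\theta-\theta_0\|_1\le n^{-5}$: this is harmless for the prior mass since $\log(n^{5})\lesssim\log p$ when $p\ge n$, so the Laplace prior still gives $\exp(-cs_0\log p)$; and a polynomially large pointwise score bound times a radius of order $n^{-5}$ yields the required $o(1)$ contribution. (Relatedly, your $\theta$-entropy accounting silently omits the $n^{1/a_2}\log p$ conversion factor that arises when relating $d_n$ to $\|\cdot\|_1\times d_H$, but that is harmless for the same reason --- it is absorbed into $O(\log p)$.)
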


\begin{remark} 
The symmetric condition \hyperref[D4]{(D4)} is not directly used in the proof of Theorems \ref{thm:dimupper} and \ref{thm:convrate_mH}.
Hence, they can be easily re-stated without \hyperref[D4]{(D4)}.
We did not try to re-state them because it entails a redefinition of the prior and a lot of minor changes. We need the symmetric assumption for the BvM theorem, particularly for proving that the score function has zero expectation, that is, $\mathbb{E}_{\theta_0, \eta_0} \dot\ell_{\theta_0, \eta} = 0$ for symmetric $\eta$.
This will play an important role in the proof of the misspecified LAN, see Lemma \ref{lemma:LAN} of the Supplement.
Finally, we note that the current proof of Corollaries 1 and 2 below relies on the symmetric assumption \hyperref[D4]{(D4)}, but it might be possible to prove them without it.
\end{remark}

Based on Theorem \ref{thm:convrate_mH}, the posterior convergence rate of $\eta$ and $\theta$ can be achieved as follows.
The proof of Corollary \ref{cor:convrate_coef} is straightforward by Theorem \ref{thm:convrate_mH} and similar arguments used in the proof of Corollary 3.2 of \cite{chae2016arxiv}, so we omit the proof here. 

\begin{corollary}\label{cor:convrate_H}
	Under the conditions of Theorem \ref{thm:convrate_mH}, we have
	\bea
	\bbE_{\theta_0,\eta_0} \Pi \left( d_H( \eta, \eta_0 ) > K_{\rm eta} \sqrt{\frac{s_n \log p }{n} }  \,\,\bigg|\,\, D_n\right) &=& o(1), 
	\eea
	for some constant $K_{\rm eta}>0$  not depending on $n$ and $p$, and for any $\eta_0$ satisfying \hyperref[D1]{(D1)}-\hyperref[D4]{(D4)} with $2\beta + \upsilon \ge 2$.
\end{corollary}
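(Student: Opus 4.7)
The plan is to deduce the Hellinger rate for $\eta$ from the mean Hellinger rate of Theorem~\ref{thm:convrate_mH} by paying an extra cost proportional to $\|X(\theta-\theta_0)\|_2^2/n$, which will itself be shown to concentrate at the target rate. The starting observation is that $d_H$ is invariant under translating both arguments by the same amount, so for each $i$ one has $d_H(\eta,\eta_0) = d_H\!\bigl(\eta(\cdot - x_i^T\theta), \eta_0(\cdot - x_i^T\theta)\bigr)$. Writing $\delta_i = x_i^T(\theta-\theta_0)$ and inserting the intermediate density $\eta_0(\cdot - x_i^T\theta+\delta_i) = \eta_0(\cdot - x_i^T\theta_0)$, the triangle inequality gives
\begin{align*}
d_H(\eta,\eta_0) \;\le\; d_H(p_{\theta,\eta,i}, p_{\theta_0,\eta_0,i}) + d_H(\eta_0(\cdot - \delta_i), \eta_0).
\end{align*}
Squaring with $(a+b)^2\le 2a^2+2b^2$ and averaging over $i$ produces the key inequality
\begin{align*}
d_H^2(\eta,\eta_0) \;\le\; 2\, d_n^2\!\bigl((\theta,\eta),(\theta_0,\eta_0)\bigr) + \frac{2}{n}\sum_{i=1}^n d_H^2\!\bigl(\eta_0(\cdot - \delta_i), \eta_0\bigr).
\end{align*}

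Next I would handle the shift term by a Fisher-information argument. The hypothesis $2\beta+\upsilon\ge 2$, combined with condition~\hyperref[D3]{(D3)} at $k=1$, yields by Lyapunov's inequality that the Fisher information $I(\eta_0) = \int (\eta_0')^2/\eta_0$ of the true density is finite. A standard Cauchy--Schwarz/Fubini argument applied to the fundamental-theorem-of-calculus representation of $\sqrt{\eta_0(z-\delta)} - \sqrt{\eta_0(z)}$ then gives the quadratic bound $d_H^2(\eta_0(\cdot-\delta), \eta_0) \le \tfrac{\delta^2}{4} I(\eta_0)$, so
\begin{align*}
\frac{2}{n}\sum_{i=1}^n d_H^2\!\bigl(\eta_0(\cdot - \delta_i), \eta_0\bigr) \;\le\; \frac{I(\eta_0)}{2n}\|X(\theta-\theta_0)\|_2^2.
\end{align*}

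To close the argument I would establish, by the same strategy that underlies Corollary~\ref{cor:convrate_coef} (following the proof of Corollary~3.2 of \cite{chae2016arxiv}), that $\|X(\theta-\theta_0)\|_2^2/n = O_{P_0}(s_n\log p/n)$ under the posterior. Combining this with the mean-Hellinger bound of Theorem~\ref{thm:convrate_mH} in the displayed inequality above then yields $d_H^2(\eta,\eta_0) \lesssim s_n\log p/n$ with posterior probability tending to one, completing the proof. The main obstacle is this last step: upgrading the mean Hellinger rate to a design-weighted $\ell_2$ rate for $\theta$. The natural route is to show, via a Taylor expansion of $d_H^2(p_{\theta,\eta,i}, p_{\theta_0,\eta_0,i})$ in $\delta_i$, that the per-observation Hellinger distance is bounded below by a multiple of $(x_i^T(\theta-\theta_0))^2$ plus $d_H^2(\eta,\eta_0)$; the symmetry assumption~\hyperref[D4]{(D4)} is critical here because it forces the linear cross term $\int(\sqrt{\eta} - \sqrt{\eta_0})(\sqrt{\eta_0})'$ to vanish (integral of an even against an odd function), leaving a clean positive quadratic remainder driven by $I(\eta_0)$. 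A truncation/peeling argument, restricted to the high-posterior event where $d_n$ is small, the model dimension is $\lesssim s_n$ by Theorem~\ref{thm:dimupper}, and the design satisfies the restricted-eigenvalue condition, is needed to control the indices where $|\delta_i|$ is too large for the Taylor bound to apply.
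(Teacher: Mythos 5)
Your opening decomposition
\begin{align*}
d_H^2(\eta,\eta_0) \;\le\; 2\, d_n^2\bigl((\theta,\eta),(\theta_0,\eta_0)\bigr) + \frac{2}{n}\sum_{i=1}^n d_H^2\bigl(\eta_0(\cdot - \delta_i), \eta_0\bigr)
\end{align*}
is correct, and so is the Fisher-information bound on the shift term using condition \hyperref[D3]{(D3)} and $2\beta+\upsilon\ge 2$. But the route you take from there has a genuine gap. After this step you still need $\frac{1}{n}\|X(\theta-\theta_0)\|_2^2 = O_{P_0}(s_n\log p/n)$ under the posterior, i.e.\ the design-weighted $\ell_2$ rate for $\theta$. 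In the paper that bound is Corollary \ref{cor:convrate_coef}, which is stated under the \emph{extra} hypothesis $s_n\log p/\phi(s_n)=o(\sqrt n)$, and your own fallback sketch (Taylor expansion, peeling) explicitly invokes a restricted-eigenvalue assumption. Neither of these is among "the conditions of Theorem~\ref{thm:convrate_mH}," which is all that Corollary \ref{cor:convrate_H} assumes. So as written your argument proves the claim only under stronger hypotheses than stated; and the burden of establishing the $\|X(\theta-\theta_0)\|_2$ rate from $d_n$ alone, without any compatibility/RE assumption, is precisely the hard part you flag as "the main obstacle" and do not resolve.

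The paper's proof sidesteps the $\theta$-rate entirely. Instead of splitting off an average shift cost, it observes that for each fixed $i$, taking $z=x_i^T(\theta-\theta_0)$ in $T_z(\eta_0)(\cdot):=\eta_0(\cdot+z)$ gives the exact identity $d_H(\eta,T_z(\eta_0))=d_H(p_{\theta,\eta,i},p_{\theta_0,\eta_0,i})$; hence $\inf_{z\in\bbR} d_H(\eta, T_z(\eta_0))\le d_n\bigl((\theta,\eta),(\theta_0,\eta_0)\bigr)$ with no accumulation of shift error. The remaining step is a purely one-dimensional statement about symmetric densities: $\inf_z d_H(\eta,T_z(\eta_0))\gtrsim \min\{1,\,d_H(\eta,\eta_0)\}$. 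For small $|z|$ this follows from the triangle inequality together with the same quadratic shift bound $d_H(\eta_0,T_z\eta_0)\lesssim |z|$ you derived; for large $|z|$ it follows because $\eta$ (being in the symmetrized mixture class) has CDF value $\tfrac12$ at $0$ while $T_z(\eta_0)$ has CDF value $\tfrac12+\int_0^{z}\eta_0$ there, so the total variation (hence Hellinger) distance is bounded below by $\int_0^{|z|}\eta_0$. This is where both the symmetry of $\eta$ and \hyperref[D4]{(D4)} enter, rather than in a vanishing cross term of a Taylor expansion. The upshot is that the paper's proof needs only $d_n$ plus pointwise properties of symmetric densities, whereas your route needs a separate concentration statement for $\theta$ that is not available under the stated hypotheses.
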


\begin{corollary}\label{cor:convrate_coef}
	Under the conditions of Theorem \ref{thm:convrate_mH} and $s_n  \log p/\phi(s_n) = o(\sqrt{n})$, we have
	\bea
	\bbE_{\theta_0,\eta_0} \Pi \left( \|\theta-\theta_0\|_1 > K_{\rm theta} \frac{s_n}{\phi(s_n)} \sqrt{\frac{\log p }{n} }  \,\,\bigg|\,\, D_n\right) &=& o(1),  \\
	\bbE_{\theta_0,\eta_0} \Pi \left( \|\theta-\theta_0\|_2 > K_{\rm theta} \frac{1}{\psi(s_n)} \sqrt{\frac{s_n \log p }{n} }  \,\,\bigg|\,\, D_n\right) &=& o(1),\\
	\bbE_{\theta_0,\eta_0} \Pi \left( \|X(\theta-\theta_0)\|_2 > K_{\rm theta}  \sqrt{s_n \log p }  \,\,\bigg|\,\, D_n\right) &=& o(1),
	\eea
	for some constant $K_{\rm theta}>0$  not depending on $n$ and $p$, and for any $\eta_0$ satisfying \hyperref[D1]{(D1)}-\hyperref[D4]{(D4)}.  
\end{corollary}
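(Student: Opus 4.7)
The plan is to reduce all three inequalities to a single intermediate bound on the prediction error $\|X(\theta-\theta_0)\|_2$, and then convert to the $\ell_1$ and $\ell_2$ norms via the uniform compatibility number and restricted eigenvalue. By Theorem \ref{thm:dimupper} I can work on the high-posterior-probability event on which $s_\theta \le K_{\rm dim}\{s_0\vee n^{\kappa^*/(2\beta+\kappa^*)}(\log n)^{2t-1}\}$, so that $s_{\theta-\theta_0}\le s_n$. This sparsity is what makes both quadratic-form definitions applicable to $\theta-\theta_0$.

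The principal step, which I expect to be the main technical obstacle, is to show the local comparison
\[
\frac{1}{n}\|X(\theta-\theta_0)\|_2^2 \;\lesssim\; d_n^2\bigl((\theta,\eta),(\theta_0,\eta_0)\bigr) + d_H^2(\eta,\eta_0).
\]
The key analytic input is that under \hyperref[D1]{(D1)}--\hyperref[D4]{(D4)} the translated Hellinger distance satisfies $d_H^2(\eta_0,\eta_0(\cdot-u))\ge c(\eta_0)\min(u^2,1)$; this is where the local H\"older smoothness, positivity, and symmetry of $\eta_0$ are used. Writing $u_i:=x_i^T(\theta-\theta_0)$, the triangle inequality together with $(a-b)^2\ge \tfrac12 a^2-b^2$ yields
\[
d_H^2(p_{\theta,\eta,i},p_{\theta_0,\eta_0,i}) \;\ge\; \tfrac{1}{2}\,d_H^2\bigl(\eta_0,\eta_0(\cdot-u_i)\bigr)-d_H^2(\eta,\eta_0).
\]
To upgrade the $\min$ in the lower bound to a pure quadratic, I will combine the bounded-design condition $\sup_{i,j}|x_{ij}|\le M\sqrt{\log p}$ with the exponential tail of the Laplace slab in \eqref{prior_gS}, which, together with the dimension bound of Theorem \ref{thm:dimupper}, ensures $\max_i|u_i|=o(1)$ with high posterior probability. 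This is precisely the structure of the argument for Corollary~3.2 of \cite{chae2016arxiv}, which I will follow rather than reproduce.

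Once the comparison inequality is in hand, Theorem \ref{thm:convrate_mH} and Corollary \ref{cor:convrate_H} immediately give $\|X(\theta-\theta_0)\|_2 \lesssim \sqrt{s_n\log p}$ with posterior probability tending to one, establishing the third claim. On the same event $s_{\theta-\theta_0}\le s_n$, the definition of the uniform compatibility number yields
\[
\phi^2(s_n)\,\|\theta-\theta_0\|_1^2 \;\le\; s_{\theta-\theta_0}\,(\theta-\theta_0)^T \sg (\theta-\theta_0) \;=\; \frac{s_{\theta-\theta_0}}{n}\,\|X(\theta-\theta_0)\|_2^2 \;\lesssim\; \frac{s_n^2\log p}{n},
\]
which delivers the $\ell_1$ rate. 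The same argument applied with the restricted eigenvalue gives $\psi^2(s_n)\|\theta-\theta_0\|_2^2 \le \|X(\theta-\theta_0)\|_2^2/n$, producing the $\ell_2$ rate. The side hypothesis $s_n\log p/\phi(s_n)=o(\sqrt n)$ is used only to guarantee that the resulting $\ell_1$ radius is $o(1)$, which in turn closes the loop with the reduction of the $\min$ in the Hellinger lower bound.
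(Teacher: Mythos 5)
Your overall architecture is the right one: establish a lower bound on $d_n$ in terms of $\|X(\theta-\theta_0)\|_2$, combine with Theorem~\ref{thm:convrate_mH} and Corollary~\ref{cor:convrate_H}, then translate to $\ell_1$ and $\ell_2$ via $\phi(s_n)$ and $\psi(s_n)$ on the event $s_{\theta-\theta_0}\le s_n$. The triangle inequality $d_H(p_{\theta,\eta,i},p_{\theta_0,\eta_0,i})\ge d_H(\eta_0(\cdot-u_i),\eta_0)-d_H(\eta,\eta_0)$ and the lower bound $d_H^2(\eta_0(\cdot-u),\eta_0)\ge c\min(u^2,1)$ (which does follow from \hyperref[D4]{(D4)} by testing against the half-line and from condition \hyperref[D3]{(D3)} for the quadratic upper/lower matching) are correct ingredients.

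The gap is in your upgrade from $\min(u_i^2,1)$ to $u_i^2$. You assert that the Laplace slab tail together with the dimension bound of Theorem~\ref{thm:dimupper} ensures $\max_i|u_i|=o(1)$ with high posterior probability. This is false in the small-$\lambda$ regime permitted by \eqref{prior_gS}. With $\lambda=\sqrt{n}/p$, the prior-tail-plus-denominator bound used in the proof of Theorem~\ref{thm:convrate_mH} only controls $\|\theta-\theta_0\|_1$ at the scale $p^2(p+\sqrt{n})+\|\theta_0\|_1$, so $\max_i|u_i|\le M\sqrt{\log p}\,\|\theta-\theta_0\|_1$ is astronomically large, not $o(1)$. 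There is no prior-only route to $\max_i|u_i|=o(1)$; it must come from the likelihood, i.e.\ from $d_n$ itself, which is exactly the circularity you acknowledge but do not resolve.

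The resolution is a self-bounding (equivalently, interpolation) argument that turns the circularity into a solvable inequality. From $\frac{1}{n}\sum_i\min(c_1^2u_i^2,c_2^2)\lesssim\epsilon_n^2$, split over $S:=\{i:|u_i|>c_2/c_1\}$. The "flat" part gives $|S|/n\lesssim\epsilon_n^2$, and for $i\in S$ one bounds $u_i^2\le M^2(\log p)\,\|\theta-\theta_0\|_1^2\le M^2(\log p)\frac{s_n}{\phi^2(s_n)}\cdot\frac{1}{n}\|X(\theta-\theta_0)\|_2^2$ using the compatibility number and $s_{\theta-\theta_0}\le s_n$. Hence
\[
\frac{1}{n}\|X(\theta-\theta_0)\|_2^2
\;\le\; C\epsilon_n^2 \;+\; M^2\,\frac{s_n^2(\log p)^2}{n\,\phi^2(s_n)}\cdot\frac{1}{n}\|X(\theta-\theta_0)\|_2^2,
\]
and the side condition $s_n\log p/\phi(s_n)=o(\sqrt{n})$ makes the coefficient in front of the self-bounding term $o(1)$, so the term can be absorbed into the left side. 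That is the actual role of the side hypothesis: it does not merely make the target radius small, it is the quantity that closes the bootstrap. The same inequality can be obtained by the interpolation $\theta_t=\theta_0+t(\theta-\theta_0)$ with $t$ chosen to place $\|\theta_t-\theta_0\|_1$ exactly at the target radius, using $\min(t^2u_i^2,1)\le\min(u_i^2,1)$ and the compatibility bound to derive a contradiction for $\theta$ outside the target ball. Without one of these steps the proposed comparison inequality is not established, and the argument as written is circular.
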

If we assume that $\phi(s_n)$ is bounded away from zero, the condition $s_n\log p/\phi(s_n)$ $=o(\sqrt{n})$ in Corollary \ref{cor:convrate_coef} becomes $s_n\log p = o(\sqrt{n})$.
Similar condition was made by \cite{chae2019bayesian} to convert the convergence rate of the mean Hellinger distance $d_n((\theta,\eta),(\theta_0,\eta_0) )$ to that of $\|\theta-\theta_0\|_1$.
Note that \cite{chae2019bayesian} assumed $s_n\sqrt{\log p}/\phi(s_n)=o(\sqrt{n})$ under the bounded design matrix assumption, $\sup_{i,j}|x_{ij}| \le M$.
If we assume $\sup_{i,j}|x_{ij}| \le M$, the condition in Corollary \ref{cor:convrate_coef} is also relaxed to $s_n\sqrt{\log p}/\phi(s_n)=o(\sqrt{n})$.
It is a quite natural condition to obtain a meaningful convergence rate  tending to zero under the $\ell_1$-norm.

The posterior convergence rate in Theorem \ref{thm:convrate_mH} is {\it nearly} optimal if the hyperparameter $\kappa$ is set equal to $1$. Note that $\kappa^* = 1$ in this case.
For example, if $s_0 \ge n^{1/(2\beta + 1)} (\log n)^{2t -1}$, the posterior convergence rate with respect to the mean Hellinger distance is $\sqrt{s_0\log p/n}$, leading to the same marginal convergence rate for $\theta$ in $\ell_2$-norm.
Note that the minimax rate is $\sqrt{s_0 \log (p/s_0)/n }$ \citep{ye2010rate}. 
If $s_0 < n^{1/(2\beta + 1)} (\log n)^{2t -1}$, the marginal convergence for $\eta$ rate with respect to the Hellinger distance is $n^{-\beta/(2\beta+1)}\times$ $\sqrt{ (\log n)^{2t-1}\log p }$ which is the minimax rate up to a logarithmic factor and the same as that of \cite{shen2013adaptive}.
In conclusion, the global rate for the whole parameter $(\theta, \eta)$ is determined by the slower one among the two rates for $\theta$ and $\eta$, where both of them are close to the minimax rate provided that $\log p$ is negligible relative to $n$.

\subsection{Bernstein von-Mises Theorem}

In this subsection, we study  the distributional limit of the marginal posterior distribution for $\theta$.
Assume for a moment that $p$ is moderately slowly increasing and the model is not sparse.
Since we are working with a smooth semi-parametric model, it is highly expected that asymptotic shape of the map $\theta \mapsto L_n(\theta, \eta) - L_n(\theta_0, \eta)$ is quadratic around $\theta_0$ for every $\eta$.
Since the posterior mass is concentrated around $(\theta_0, \eta_0)$, we only need to consider $\eta$'s that are sufficiently close to $\eta_0$.
The assertion leads to the semi-parametric BvM theorem which guarantees the asymptotic efficiency of Bayes estimator.

With a sparse model considered in this paper, the marginal posterior distribution cannot converge to a single normal distribution unless posterior puts most of its mass on a single model.
To be more specific, note that the marginal posterior distribution of $\theta$ is given as
\bea
d \Pi(\theta \mid D_n) &=&  \sum_{S \subseteq \{1,\ldots,p\}} w_S \, dQ_S(\theta_S) \, d\delta_0(\theta_{S^c}),
\eea
where 
\bea
w_S &\propto& \frac{\pi_p(|S|)}{\binom{p}{|S|}} \int \int \exp \left( L_n(\widetilde{\theta}_S, \eta) - L_n(\theta_0,\eta_0) \right)  d\Pi_{\calH}(\eta) g_S(\theta_S) d\theta_S 
\eea
and
\bea
Q_S(\theta_S \in B) &=& \frac{\int_B \int \exp \left( L_n(\widetilde{\theta}_S, \eta) - L_n(\theta_0,\eta_0) \right)  d\Pi_{\calH}(\eta) g_S(\theta_S) d\theta_S }{\int \int \exp \left( L_n(\widetilde{\theta}_S, \eta) - L_n(\theta_0,\eta_0) \right)  d\Pi_{\calH}(\eta) g_S(\theta_S) d\theta_S}
\eea
for every measurable set $B\subseteq \bbR^{|S|}$.
Here, $Q_S$ can be understood as the conditional posterior distribution of $\theta_S$ given $S_\theta = S$.
If $|S|$ is not too large, $Q_S$ is expected to be asymptotically normal as in the semi-parametric BvM theorem described in the previous paragraph.
As a consequence, if there is a limit distribution of the marginal posterior for $\theta$, it should be a mixture of the form
\bea
d\Pi^\infty(\theta \mid D_n) &=& \sum_{S \subseteq\{1,\ldots,p\}} w_S\, n^{-\frac{|S|}{2}} \, dN_{n,S}( h_S ) \, d \delta_0(\theta_{S^c}),
\eea 
where $h_S =\sqrt{n}(\theta_S-\theta_{0,S})$, $n^{-\frac{|S|}{2}}$ is the determinant of the Jacobian matrix, and $N_{n,S}$ is defined in section \ref{subsec:notation}.
Theorem \ref{thm:BvM} says that the semi-parametric BvM theorem holds under slightly stronger condition than those needed for the posterior convergence rate results.

\begin{theorem}[Bernstein von-Mises]\label{thm:BvM}
	Assume that the prior conditions \eqref{prior_p}, \eqref{prior_gS}, \eqref{al_tail}-\eqref{G4} hold with $a_2=3$, $\lambda\|\theta_0\|_1 = O(s_0 \log p)$ and $\lambda s_n {\log p} = o(\sqrt{n})$.
	Further assume that 
	$s_n^6 \{ (\log p)^{11}\vee s_n^{\frac{5}{12}} \left( \log p \right)^{8 + \frac{11}{12}}   \}  =o(n^{1-\zeta}) $ 
	holds for some constant $\zeta>0$ and $\psi(s_n)$ is bounded away from zero. Then, we have
	\bea
	\Eaa \Big[d_V\left( \Pi(\cdot |D_n),  \Pi^\infty(\cdot| D_n)  \right)\Big] &=& o(1)
	\eea
	for any $\eta_0$ satisfying \hyperref[D1]{(D1)}-\hyperref[D5]{(D5)}. 
\end{theorem}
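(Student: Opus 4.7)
The plan is to bound the total variation distance through the support decomposition
\[
d_V\bigl(\Pi(\cdot \mid D_n),\Pi^\infty(\cdot \mid D_n)\bigr) \;\le\; \sum_{S} w_S\, d_V\bigl(\widehat Q_S, N_{n,S}\bigr),
\]
where $\widehat Q_S$ denotes the image of $Q_S$ under the rescaling $h_S=\sqrt n(\theta_S-\theta_{0,S})$. First I would apply Theorem~\ref{thm:dimupper} to restrict the sum to $S$ with $|S|\le s_n$, since the combined posterior and limit-posterior mass outside that range is $o_{P_0}(1)$. On the restricted event it is then enough to show each conditional $\widehat Q_S$ is close to $N_{n,S}$ in total variation, with the closeness quantified uniformly in $S$ so that the $w_S$-weighted sum vanishes.

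For such an $S$, the density of $\widehat Q_S$ is proportional to
\[
g_S\bigl(\theta_{0,S}+h_S/\sqrt n\bigr)\int e^{L_n(\widetilde\theta_S,\eta)-L_n(\widetilde\theta_{0,S},\eta)}\,d\Pi_\calH^{S}(\eta \mid D_n),
\]
where $\Pi_\calH^{S}$ is the effective conditional law of $\eta$ given the support $S$. The key analytic tool is the misspecified LAN expansion (Lemma~\ref{lemma:LAN} of the Supplement),
\[
L_n(\widetilde\theta_S,\eta)-L_n(\widetilde\theta_{0,S},\eta)=h_S^T G_{n,\eta,S}-\tfrac12 h_S^T V_{n,\eta,S} h_S+r_n(\eta,h_S),
\]
valid for $\|h_S\|\le M$ and for $\eta$ in the Hellinger neighborhood of $\eta_0$ supplied by Corollary~\ref{cor:convrate_H}. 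The symmetric assumption~\hyperref[D4]{(D4)} is essential here: it forces $\bbE_{\theta_0,\eta_0}\dot\ell_{\theta_0,\eta}=0$ for every symmetric $\eta$, killing the bias term that would otherwise destroy semi-parametric efficiency.

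Given the expansion I would show, using Corollary~\ref{cor:convrate_H} combined with the growth bounds~\hyperref[D5]{(D5)}, that $V_{n,\eta,S}\to V_{n,S}$ and $G_{n,\eta,S}\to G_{n,S}$ uniformly over the effective $\eta$-neighborhood, so the $\eta$-integral factors out (up to negligible multiplicative error) into $\exp\bigl(h_S^T G_{n,S}-\tfrac12 h_S^T V_{n,S} h_S\bigr)$. The condition $\lambda s_n\log p=o(\sqrt n)$ ensures $g_S(\theta_{0,S}+h_S/\sqrt n)/g_S(\theta_{0,S})\to1$ uniformly on the LAN ball, so the Laplace prior behaves like a constant at the $1/\sqrt n$ scale. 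Outside the LAN ball, posterior tail control on $\theta$ from Corollary~\ref{cor:convrate_coef}, which applies because $\psi(s_n)$ is bounded below, matches the Gaussian tails of $N_{n,S}$.

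The hardest part is obtaining the LAN remainder $r_n(\eta,h_S)$ uniformly over the exponentially many supports $S$ with $|S|\le s_n$ and over the shrinking non-sub-Gaussian $\eta$-neighborhood. A cubic Taylor expansion of $\ell_\eta$ involves $\dddot\ell_\eta$ rather than $\dddot\ell_{\eta_0}$, and the polynomial growth bounds in~\hyperref[D5]{(D5)} together with the design bound $\sup|x_{ij}|\le M\sqrt{\log p}$ must be combined with moment-based concentration (in place of Hoeffding, no longer available) to control it. The technical rate $s_n^6\{(\log p)^{11}\vee s_n^{5/12}(\log p)^{8+11/12}\}=o(n^{1-\zeta})$ appears to be calibrated precisely to absorb these remainders after union-bounding across $|S|\le s_n$ supports and tolerating the cubic slack in $h_S$, while the choice $a_2=3$ secures the $\eta$-side contraction needed to control $\nu_\eta-\nu_{\eta_0}$ at a matching rate.
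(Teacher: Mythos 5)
Your plan reproduces the paper's route: restrict the posterior to small models via Theorem~\ref{thm:dimupper}, expand each conditional $Q_S$ by a misspecified LAN, use $\lambda s_n\log p = o(\sqrt n)$ to wash out the Laplace prior at the $1/\sqrt n$ scale, and control tails outside the LAN ball. The paper implements this through a three-step triangle inequality (first show $\breve\Pi\approx\Pi$ and $\breve\Pi^\infty\approx\Pi^\infty$ on $M_n\Theta_n^*\times\calH_n^*$ via Corollaries \ref{cor:convrate_H}--\ref{cor:convrate_coef}, Lemma \ref{lemma:post_conc_Hn} and Lemma \ref{lemma:normal}, then compare the restricted versions via Lemma 4.5 of \cite{chae2019bayesian}), rather than the one-shot decomposition you lead with, but the substance is the same. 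Two points in your sketch are inaccurate and would need repair. First, "the $\eta$-integral factors out" is not a consequence of deterministic convergence $V_{n,\eta,S}\to V_{n,S}$, $G_{n,\eta,S}\to G_{n,S}$: the paper packages this as a \emph{second} LAN statement (Lemma \ref{thm:misLAN}) whose quadratic is already centered at $\eta_0$, and its proof must control the random term $\sup_{\eta\in\calH_n^*}\|\bbG_n(\dot\ell_{\theta_0,\eta}-\dot\ell_{\theta_0,\eta_0})\|_\infty$ by a bracketing-entropy/chaining argument driven by the $L_2(P_{\eta_0})$ rate $\sup_\eta\int(\dot\ell_\eta-\dot\ell_{\eta_0})^2\,dP_{\eta_0}\lesssim\epsilon_n^{4/5-\zeta}(s_n\log p)^{c/a_2}$ from Lemmas \ref{lemma:score}--\ref{lemma:score2}; this empirical-process step is where most of the work actually lies, not "moment-based concentration." Second, the rate condition $s_n^6\{(\log p)^{11}\vee s_n^{5/12}(\log p)^{8+11/12}\}=o(n^{1-\zeta})$ is not calibrated to a union bound across the $\binom{p}{|S|}$ supports: in the key entropy estimate (the class $\calL_n$ in Lemma \ref{thm:misLAN}) the combinatorial contribution is only $\log p$ from the $p$ coordinates, and the $s_n$ powers enter instead through the $\ell_1$-ball radius $M_n s_n\sqrt{\log p/n}$ multiplying the envelope, through the score-difference rate, and through $|\nu_\eta-\nu_{\eta_0}|\cdot\|X(\theta-\theta_0)\|_2^2$. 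Attributing those powers to a support union bound misplaces the bottleneck and would obscure why $a_2=3$ and condition \hyperref[D5]{(D5)} are the levers that make the rate achievable.
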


The bounded condition on $\psi(s_n)$ ensures that the quadratic term of log-likelihood ratio does not vanish.
The condition $\lambda s_n {\log p}=o(\sqrt{n})$ is required to wash out the prior effect and is a quite mild condition if we consider the small $\lambda$ regime such as $\lambda=\sqrt{n}/p$. 
To prove the BvM theorem, \cite{castillo2015bayesian} also used similar condition, $\lambda s_n \sqrt{\log p} = o(\|X\|)$, where $\|X\|$ is the maximum $\ell_2$-norm of the columns of matrix $X$.
Note that if $\log p =o(n)$ and  $x_{ij}$'s are random samples from $N(0,1)$, it coincide with $\lambda s_n {\log p}=o(\sqrt{n})$  with high probability  tending to $1$, as $p\to\infty$.

The condition $s_n^6 \{ (\log p)^{11}\vee s_n^{\frac{5}{12}} \left( \log p \right)^{8 + \frac{11}{12}}   \}  =o(n^{1-\zeta}) $ for some constant $\zeta>0$, are sufficient conditions for 
\bea
\int \sup_{\eta\in \calH_n^*} \left(\dot{\ell}_\eta(y) - \dot{\ell}_{\eta_0}(y) \right)^2 dP_{\eta_0}(y)
\eea
converging to zero at a certain rate, where $\calH_n^*$ is a neighborhood of $\eta_0$ to which the posterior distribution contracts. 
See Lemmas 6 and 12 in Supplementary Material for details.
To satisfy this condition, a certain level of smoothness of $\eta_0$ is essential.
For example, suppose we consider the prior $\sigma^6 \sim IG(a_0, b_0)$ for some positive constants $a_0$ and $b_0$, i.e., $a_2=3$ and $\kappa=6$. 
Then, the condition $\left( s_n\log p \right)^{6 + \frac{5}{12}}(\log p)^{\frac{5}{2}} = o(n^{1-\zeta})$ is satisfied when $(s_0 \log p)^{6 + \frac{5}{12}}(\log p)^{\frac{5}{2}} = o(n^{1-\zeta})$ and $n^{\frac{77}{4\beta+12}} (\log p)^{\frac{107}{12} } = o(n^{1-\zeta})$, which hold for $\beta > 16.25$ provided that $\log p$ is negligible relative to $n$.
\cite{chae2019bayesian} assumed $(s_0 \log p)^6 = o(n^{1-\zeta})$ for some constant $\zeta>0$ to establish the semi-parametric BvM theorem.
Our condition is slightly stronger due to relaxation on the tail condition of $\eta_0$.

\subsection{Strong Model Selection Consistency}
Theorem \ref{thm:selection} states that the posterior probability of $S_\theta$ for the strict supersets of the true model $S_0$ tends to zero. 
It implies that the posterior probability is asymptotically concentrated on the union of some strict subset of $S_0$ and possibly other coordinates of $S_0^c$.

\begin{theorem}[No superset]\label{thm:selection}
	Under the conditions of Theorem \ref{thm:BvM} and $\tau \ge 2\gamma_1$, we have
	\bea
	\Eaa \Pi ( S_\theta \supsetneq S_0 \mid D_n) &=& o(1)
	\eea
	for any $\eta_0$ satisfying  \hyperref[D1]{(D1)}-\hyperref[D5]{(D5)}, provided that $A_4 > K_{\rm sel}$ for some constant $K_{\rm sel}$ depending only on $\eta_0$.
\end{theorem}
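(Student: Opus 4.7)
The plan is to exploit the spike-and-slab decomposition
\bea
d\Pi(\theta \mid D_n) = \sum_S w_S\, dQ_S(\theta_S)\, d\delta_0(\theta_{S^c}),
\eea
so that $\Pi(S_\theta \supsetneq S_0 \mid D_n) \le \sum_{S \supsetneq S_0} w_S/w_{S_0}$. By Theorem \ref{thm:dimupper}, the posterior concentrates on $\{|S_\theta| \le s_n\}$, so writing $S = S_0 \cup T$ with $T \subseteq S_0^c$ and $|T| = k \ge 1$, it suffices to show that
\bea
\Eaa \sum_{k=1}^{s_n - s_0}\ \sum_{T \subseteq S_0^c,\ |T| = k} \frac{w_{S_0 \cup T}}{w_{S_0}} \lra 0.
\eea
The prior contribution to each ratio is controlled by \eqref{prior_p} and elementary binomial estimates as $(A_2 p^{-A_4})^k \cdot \binom{p}{s_0}/\binom{p}{s_0+k} \lesssim (A_2 s_n / p^{A_4+1})^k$.

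For the integrated-likelihood contribution, I would carry out a uniform Laplace expansion of $\int\!\int \exp(L_n(\widetilde\theta_S, \eta) - L_n(\theta_0, \eta_0)) g_S(\theta_S)\, d\theta_S\, d\Pi_\calH(\eta)$. Restricting the $\eta$-integration to the Hellinger neighborhood $\calH_n^*$ on which the posterior concentrates by Theorem \ref{thm:convrate_mH} and Corollary \ref{cor:convrate_H}, and expanding $\theta_S \mapsto L_n(\widetilde\theta_S, \eta)$ quadratically about its profile maximizer as in the proof of Theorem \ref{thm:BvM}, the integrand reduces to
\bea
\exp\!\Big(L_n(\theta_0, \eta) + \tfrac12 G_{n,\eta,S}^T V_{n,\eta,S}^{-1} G_{n,\eta,S}\Big)\, (2\pi/n)^{|S|/2}\, |V_{n,\eta,S}|^{-1/2}\, g_S(\theta_{0,S})
\eea
up to $1 + o_{P_0}(1)$ factors. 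Since $\theta_{0,T} = 0$, one has $g_S(\theta_{0,S})/g_{S_0}(\theta_{0,S_0}) = (\lambda/2)^k$; splitting $G_{n,\eta,S}^T V_{n,\eta,S}^{-1} G_{n,\eta,S}$ along the $V_{n,\eta}$-orthogonal complement of $S_0$, using the bounded restricted-eigenvalue condition to control the determinant ratio, and exploiting $\lambda/\sqrt{n} \le \sqrt{\log p}$, the weight ratio becomes
\bea
\frac{w_{S_0 \cup T}}{w_{S_0}} \lesssim \big(C_1 s_n p^{-A_4-1} \sqrt{\log p}\big)^k\, \exp\!\bigl(\tfrac12 Q_T\bigr),
\eea
where $Q_T = G_{n,\eta, T\mid S_0}^T V_{n,\eta, T\mid S_0}^{-1} G_{n,\eta, T\mid S_0}$ is the conditional quadratic form of the score in the $T$-directions.

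The decisive quantitative input is the uniform deviation bound $\bbP_{\theta_0,\eta_0}\big(\max_{|T| \le s_n - s_0} Q_T > K_\ast k \log p \big) = o(1)$ for some $K_\ast$ depending only on $\eta_0$. This is where $\tau \ge 2\gamma_1$ is essential: combining $|\dot\ell_{\eta_0}(y)| \le C_{\eta_0}(|y|^{\gamma_1} + 1)$ from \hyperref[D5]{(D5)} with the tail $\eta_0(y) \lesssim \exp(-b|y|^\tau)$ from \hyperref[D2]{(D2)} yields $\tau/\gamma_1 \ge 2$, which forces $\dot\ell_{\eta_0}(\epsilon_i)$ to be sub-Gaussian. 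Coupled with $|x_{ij}| \le M\sqrt{\log p}$, the Hanson--Wright inequality controls $Q_T$ with a sub-exponential tail, and a union bound over the $\binom{p-s_0}{k} \le p^k$ choices of $T$ delivers the uniform estimate. Absorbing the factor $\exp(K_\ast k \log p / 2) = p^{K_\ast k /2}$, the expected sum of weight ratios over supersets of size $s_0 + k$ is dominated by $\bigl(C' s_n \sqrt{\log p}\, p^{K_\ast / 2 - A_4}\bigr)^k$. Choosing $K_{\rm sel}$ large enough to absorb $K_\ast/2$ and the polynomial growth of $s_n$ in $p$, the assumption $A_4 > K_{\rm sel}$ renders each term geometrically small in $p$, and the finite sum over $k$ is $o(1)$.

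The main obstacle is establishing the Laplace expansion \emph{uniformly} over all supersets of size up to $s_n$ and all $\eta \in \calH_n^*$ simultaneously. The cubic Taylor remainder in $\theta_S$ must be controlled using the polynomial bound $|\dddot\ell_{\eta_0}(y)| \lesssim |y|^{\gamma_3} + 1$ from \hyperref[D5]{(D5)}, together with mean-Hellinger contraction for $\eta$ and empirical-process bounds, along the lines of the proof of Theorem \ref{thm:BvM}; the stringent polylogarithmic conditions on $s_n$ imposed there are precisely what makes this simultaneous expansion valid. Once that machinery is in place, the geometric-series computation closes the argument, in the spirit of the semi-parametric no-superset scheme of \cite{castillo2015bayesian} adapted to the setting of \cite{chae2019bayesian}.
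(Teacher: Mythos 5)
Your proposal follows essentially the same route as the paper: restrict to $|S|\le s_n$ via Theorem \ref{thm:dimupper}, pass to the restricted posterior over $\Theta_n^*\times\calH_n^*$, apply the misspecified LAN expansion (Lemma \ref{thm:misLAN}) to replace each integrated likelihood ratio by a Gaussian integral whose ratio across nested models is governed by the projected score quadratic form $\|(H_S-H_{S_0})\dot L_{n,\eta_0}\|_2^2$, bound the deviation of this quadratic form via the Hanson--Wright inequality using the sub-Gaussianity of $\dot\ell_{\eta_0}(\epsilon_i)$ implied by $\tau\ge 2\gamma_1$, and union bound over supersets with the prior penalty $A_4>K_{\rm sel}$ absorbing the resulting $p^{K_\ast/2}$ factor. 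This is precisely the paper's argument, and your conditional quadratic form $Q_T$ is the same object as $\|(H_S-H_{S_0})\dot L_{n,\eta_0}\|_2^2/\nu_{\eta_0}$ in the paper.
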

Since we assume that $\eta_0(y) \lesssim \exp(-b|y|^{\tau})$ and $|\dot{\ell}_{\eta_0}(y)| \lesssim |y|^{\gamma_1}+C$, the condition $\tau \ge 2\gamma_1$ implies that $\dot{\ell}_{\eta_0}(y_i - x_i^T \theta_0)$ is a sub-Gaussian random variable. The sub-Gaussian assumption enables us to use the Hanson-Wright inequality \citep{hanson1971bound,wright1973bound}, which is one of  the key properties for proving Theorem \ref{thm:selection}.
Note that a normal distribution and a location-scale mixture of normal with compact mixing distribution satisfy the above condition.
One important consequence of Theorem \ref{thm:selection} is that if we assume that 
\bean\label{betamin}
\min \Big\{ |\theta_{0,j}|: \theta_{0,j}\neq 0,\,\, 1\le j \le p  \Big\} &\ge& \frac{K_{\rm theta}}{\psi(s_n)} \sqrt{ \frac{s_n\log p}{n}},
\eean
Corollary \ref{cor:convrate_coef} and Theorem \ref{thm:selection} guarantee the strong model selection consistency, which means $\Eaa\Pi(S_\theta = S_0\mid D_n) \lra 1$ as $n\to\infty$.
The above condition \eqref{betamin} is called the {\it beta-min} condition commonly assumed to obtain the model selection consistency \citep{castillo2015bayesian,song2017nearly}.
The following corollary asserts that one can achieve the selection consistency and efficiently capture the uncertainty of the nonzero coordinates under the beta-min condition.

\begin{corollary}[Selection]\label{cor:selection}
	Let $\widehat{\theta}_{S_0} = n^{-1/2} V_{n, S_0}^{-1} G_{n,S_0}+ \theta_{0,S_0}$, $\widehat{\sg}_{S_0} = n^{-1} V_{n, S_0}^{-1}$, and $\delta_{S_0^c}$ be the Dirac measure at $0 \in \bbR^{|S^c|}$. 
	Denote $\theta \sim N_{|S_0|}( \widehat{\theta}_{S_0} ,\, \widehat{\sg}_{S_0} ) \otimes \delta_{S_0^c}$ if $\theta_{S_0}\sim N_{|S_0|}( \widehat{\theta}_{S_0} ,\, \widehat{\sg}_{S_0} )$ and $\theta_{S_0^c} = 0$, independently.
	Under the conditions of Theorem \ref{thm:selection} and \eqref{betamin}, we have
	\bea
	\Eaa \left[ d_V\left( \Pi( \cdot |D_n), N_{|S_0|}( \widehat{\theta}_{S_0} ,\, \widehat{\sg}_{S_0} ) \otimes \delta_{S_0^c}  \right)  \right]  &=& o(1)
	\eea
	for any $\eta_0$ satisfying  \hyperref[D1]{(D1)}-\hyperref[D5]{(D5)}, provided that $A_4 > K_{\rm sel}$ for some constant $K_{\rm sel}$ depending only on $\eta_0$.
\end{corollary}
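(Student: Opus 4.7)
The plan is to combine the mixture BvM approximation of Theorem \ref{thm:BvM} with the strong model-selection consistency implied by Theorem \ref{thm:selection}, Corollary \ref{cor:convrate_coef}, and the beta-min condition \eqref{betamin}. Write $\Pi^*_{S_0}$ for the target measure $N_{|S_0|}(\what\theta_{S_0},\what\sg_{S_0})\otimes\delta_{S_0^c}$ and bound, by the triangle inequality,
\bea
d_V\bigl(\Pi(\cdot\mid D_n),\Pi^*_{S_0}\bigr) &\le& d_V\bigl(\Pi(\cdot\mid D_n),\Pi^\infty(\cdot\mid D_n)\bigr) + d_V\bigl(\Pi^\infty(\cdot\mid D_n),\Pi^*_{S_0}\bigr).
\eea
The first term is $o(1)$ in $\Eaa$-expectation directly from Theorem \ref{thm:BvM}, so the entire work is in the second term.

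For that second term, I would first observe that, after the change of variables $h_{S_0}=\sqrt{n}(\theta_{S_0}-\theta_{0,S_0})$, the $S=S_0$ component of $\Pi^\infty$ equals $w_{S_0}\Pi^*_{S_0}$: the law $N_{n,S_0}$ of $h_{S_0}$ has mean $V_{n,S_0}^{-1}G_{n,S_0}$ and variance $V_{n,S_0}^{-1}$, and the Jacobian $n^{-|S_0|/2}$ appearing in the definition of $\Pi^\infty$ converts these into the $\what\theta_{S_0}$ and $\what\sg_{S_0}$ of the statement, paired with the Dirac factor on $S_0^c$. Since the mixture components $w_S\mu_S$ for distinct $S$ live on coordinate subspaces with different zero patterns (induced by the respective factors $\delta_0(\theta_{S^c})$) they are mutually singular, which yields the clean identity
\bea
d_V\bigl(\Pi^\infty(\cdot\mid D_n),\Pi^*_{S_0}\bigr) &=& |w_{S_0}-1| + \sum_{S\ne S_0}w_S \,\,=\,\, 2\,\Pi^\infty(S_\theta\ne S_0\mid D_n).
\eea

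It remains to show $\Eaa \Pi^\infty(S_\theta\ne S_0\mid D_n)=o(1)$. Applying the BvM bound once more gives
\bea
\Pi^\infty(S_\theta\ne S_0\mid D_n) &\le& d_V(\Pi,\Pi^\infty) + \Pi(S_\theta\supsetneq S_0\mid D_n) + \Pi(S_0\not\subseteq S_\theta\mid D_n).
\eea
Theorem \ref{thm:selection} handles the superset term. For the remaining event, any $\theta$ with $S_0\not\subseteq S_\theta$ zeroes out some coordinate $j\in S_0$, forcing
\bea
\|\theta-\theta_0\|_2 &\ge& \min\bigl\{|\theta_{0,j}|:\theta_{0,j}\ne 0\bigr\} \,\,\ge\,\, \frac{K_{\rm theta}}{\psi(s_n)}\sqrt{\frac{s_n\log p}{n}}
\eea
by \eqref{betamin}, and Corollary \ref{cor:convrate_coef} says the posterior mass of this event is $o_{P_0}(1)$. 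Taking expectations and assembling the three $o(1)$ bounds completes the argument.

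The reasoning is essentially book-keeping once the mixture structure of $\Pi^\infty$ is exploited, and I do not foresee a substantial technical obstacle. The one delicate point is verifying that the $S=S_0$ slice of $\Pi^\infty$ coincides, after the Jacobian change of variables, with $\Pi^*_{S_0}$ precisely as defined in the statement, so that mutual singularity of the other slices collapses the total-variation gap between $\Pi^\infty$ and $\Pi^*_{S_0}$ to $2(1-w_{S_0})$.
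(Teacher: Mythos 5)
Your argument is correct, and since the paper states this corollary without giving a proof (treating it as an immediate consequence of Theorem \ref{thm:BvM}, Theorem \ref{thm:selection}, Corollary \ref{cor:convrate_coef}, and \eqref{betamin}), yours is exactly the natural assembly of those results. One small simplification worth noting: the weights $w_S$ appearing in the definition of $\Pi^\infty(\cdot\mid D_n)$ are by construction the same as those in $\Pi(\cdot\mid D_n)$, so $\Pi^\infty(S_\theta\neq S_0\mid D_n)=\Pi(S_\theta\neq S_0\mid D_n)$ exactly, and the extra passage through $d_V(\Pi,\Pi^\infty)$ in your final display is unnecessary (though harmless).
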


\begin{remark}
	\cite{yang2017posterior} proved the asymptotic normality for an individual coordinate $\theta_i$ without the beta-min condition.
	However, her results focus on the posterior distribution of an individual coordinate under the normal error distribution and cannot be extended to the posterior distribution of the whole $\theta$.
\end{remark}

\section{Discussion}\label{sec:disc}

In this paper, we study asymptotic properties of posterior distributions for high-dimensional linear regression models under unknown symmetric error.
We extend the previous works on Bayesian asymptotic theory to deal with much more general error densities beyond the sub-Gaussian class. To the best of our knowledge, this is the first work that has proved posterior convergence rates and BvM theorem for high-dimensional linear regression model without the sub-Gaussian assumption.
For the BvM theorem and selection consistency, the conditions, $s_n^6(\log p)^{11}=o(n^{1-\zeta})$ and 
$\left( s_n\log p \right)^{6 + \frac{5}{12}}(\log p)^{\frac{5}{2}} = o(n^{1-\zeta})$, are needed, which requires that the true error distribution is smooth enough.

Note that  algorithms for sampling a DP mixture  and a spike-and-slab prior can be suitably combined to generate MCMC samples from the posterior distribution in our semiparametric model, see Section 4 of \cite{chae2019bayesian}.
Although our theoretical analysis is limited to error densities with exponentially decaying tails, results of numerical experiments in \cite{chae2019bayesian} demonstrate that a semi-parametric estimator performs much better in prediction, model selection and uncertainty quantification than a parametric counterpart when the tail of error density is polynomially decaying.
In particular, with a location-scale mixture of Gaussians with a conjugate DP prior, the selection consistency and BvM phenomena seem to hold while a location mixture only does not provide  satisfactory results.

Future work will focus on the theoretical development of a location-scale mixtures with heavy-tailed components such as the Student's $t$ distributions.
This will likely entail  new techniques for Bayesian asymptotic, see \cite{chae2017novel} for example.

\appendix

\section{Notation for Proofs}

For given real-valued functions $l$ and $u$, we define the bracket $[l,u]$ as the set of all functions $f$ such that $l\le f \le u$. 
We call a bracket $[l,u]$ an $\epsilon$-bracket if $d(l,u)<\epsilon$ for a given constant $\epsilon>0$ and a (semi-)metric $d$.
For a given class of real-valued functions $\calF$, the bracketing number $N_{[\, ]}(\epsilon,\calF,d)$ is the minimal number of $\epsilon$-brackets which is needed to cover $\calF$.
The covering number $N(\epsilon, \calF, d)$ is the minimal number of $\epsilon$-balls, $\{ g: d(f,g) < \epsilon \}$, which is needed to cover $\calF$.


For given constant $\epsilon>0$, the class of real-valued functions $\calF$ on $\bbR^p \times \bbR$ and the data $D_n =\{(Y_1,x_1),\ldots, (Y_n,x_n) \}$, we denote $N_{[\,]}^n(\epsilon,\calF)$ as the minimal number of partition $\{\calF_1,\ldots,\calF_N \}$ of $\calF$ such that
\bea
\sup_{1\le j \le N} \frac{1}{n} \sum_{i=1}^n \bbE_{\theta_0,\eta_0} \left[\sup_{f,g \in \calF_j} |f(x_i,Y_i) - g(x_i,Y_i)|^2  \right]  &\le& \epsilon^2.
\eea

We define the set of density functions
\bean\label{Hmix_def}
\calH_{\rm mix} &:=& \left\{ \eta(\cdot) = \int \phi_\sigma (\cdot-z) d \widebar{F}(z) : \sigma>0,\,\, F \in \calM[-C'n, C'n] \right\} 
\eean
for the constant $C'>0$ used in \eqref{al_n_n}.
Recall that $\widebar{F} = (F+F^{-})/2$, $dF^{-}(z) = dF(-z)$ and $\phi_\sigma(z) = (\sqrt{2\pi}\sigma)^{-1}\exp \{ -z^2/(2\sigma^2)\}$, for any $z\in \bbR$.

\section{Proofs for Posterior Convergence Rates}

\begin{lemma}\label{lemma:dem_lb}
	Assume that the prior conditions (2)-(8) hold and $\eta_0 $ satisfies \hyperref[D1]{(D1)}-\hyperref[D4]{(D4)}.
	If $\log p \le n^2$, then there exists a constant $C_{\rm lower} >0$ not depending on $(n,p)$ such that the $\bbP_{\theta_0,\eta_0}$-probability of the event
	\bean\label{En_event}
	&& \int_{\Theta \times \calH_{\rm mix}} R_n(\theta,\eta) d\Pi(\theta,\eta) \nonumber\\
	&& \quad \ge \,\, \exp \big[ C_{\rm lower} \{ \log \pi_p(s_0) - s_0 \log p - \lambda \|\theta_0\|_1 - n \tilde{\epsilon}_n^2  \}  \big]   \quad\quad
	\eean
	converges to 1 as $n\to \infty$, where $\tilde{\epsilon}_n = n^{-\beta/(2\beta +\kappa^*)} (\log n)^{t_0}$ and $t_0 = \{\kappa^*(1+ \tau^{-1}+\beta^{-1}) + 1 \}/(2+ \kappa^*\beta^{-1})$.
\end{lemma}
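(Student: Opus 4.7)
The plan is to adapt the standard evidence-lower-bound argument of Ghosal--van der Vaart, with the density-side input coming from the symmetric Dirichlet-process-mixture prior-thickness theorem of \cite{shen2013adaptive} (their Theorem 3 and Proposition 1). Each of the four terms inside the braces of \eqref{En_event} tracks a distinct piece of prior mass: $-\log\pi_p(s_0)$ from selecting the correct model size, $s_0\log p$ from picking $S=S_0$ out of $\binom{p}{s_0}$ candidates, $\lambda\|\theta_0\|_1$ from evaluating the Laplace density near $\theta_{0,S_0}$, and $n\tilde\epsilon_n^2$ from the KL radius used to cover $\eta_0$. Accordingly, I would restrict the integral in \eqref{En_event} to $B_n := A_\Theta \times A_\calH$, where $A_\Theta := \{\theta : S_\theta = S_0,\ \|\theta_{S_0} - \theta_{0,S_0}\|_\infty \le \delta_n\}$ for a $\delta_n$ to be chosen, and $A_\calH$ is a KL/V-ball of radius $\tilde\epsilon_n$ around $\eta_0$ inside $\calH_{\rm mix}$. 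Bounding the Laplace density pointwise yields $g_{S_0}(\theta_{S_0})\ge (\lambda/2)^{s_0}\exp(-\lambda\|\theta_0\|_1 - s_0\lambda\delta_n)$ on $A_\Theta$, and Stirling gives $\binom{p}{s_0}^{-1}\ge\exp(-Cs_0\log p)$. For the density factor I invoke \cite{shen2013adaptive}, whose assumptions on the base measure $\widebar\alpha$ and on $G$ match exactly (4)--(8) and whose smoothness/tail requirements on $\eta_0$ are (D1)--(D3); the symmetry condition (D4) ensures that the symmetrized construction produces a symmetric density, so their bound applies and delivers $\Pi_\calH(A_\calH)\ge\exp(-Cn\tilde\epsilon_n^2)$.

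I would then lower bound $R_n$ uniformly on $B_n$ by the classical evidence argument that turns a KL/V neighborhood of size $\tilde\epsilon_n$ into the $\bbP_{\theta_0,\eta_0}$-high-probability lower bound $\int_{B_n} R_n\,d\Pi \ge \Pi(B_n)\exp(-c n \tilde\epsilon_n^2)$. To include the shift from $\theta_0$, I write the $i$-th summand of $\log R_n(\theta,\eta)$ as $\log\{\eta(\epsilon_i - u_i)/\eta_0(\epsilon_i)\}$ with $u_i := x_i^T(\theta-\theta_0)$; the bounded-design assumption together with $\|\theta-\theta_0\|_\infty \le \delta_n$ gives $|u_i|\le s_0 M \delta_n \sqrt{\log p}$ uniformly. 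A second-order Taylor expansion of $\log\eta$ then controls the additional KL and variance contributions by $O(n\delta_n^2 s_0^2 \log p)$ through the finite moments of $\dot\ell_{\eta_0}$ and $\ddot\ell_{\eta_0}$ supplied by (D3). Choosing $\delta_n\asymp\tilde\epsilon_n/(s_0\sqrt{n\log p})$ absorbs this perturbation into $n\tilde\epsilon_n^2$, while the $s_0\log(1/\delta_n)$ correction in the Laplace mass stays of the same order as $s_0\log p + \lambda\|\theta_0\|_1$ under the hypotheses $\lambda\le\sqrt{n\log p}$ and $\log p\le n^2$.

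The main technical obstacle is the Hellinger-to-KL/V upgrade for the density neighborhood $A_\calH$: turning Hellinger closeness into genuine Kullback--Leibler closeness for a symmetric DP location mixture requires a lower cutoff on the bandwidth $\sigma$ and an effectively compactly supported mixing measure, both of which are engineered into conditions (4), (6) and (8). Once the inputs of \cite{shen2013adaptive} are translated into the present notation, the remainder is routine bookkeeping of universal constants to collect everything into a single $C_{\rm lower}$ independent of $n$ and $p$.
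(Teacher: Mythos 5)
Your overall architecture matches the paper's proof: restrict the integral to a product neighborhood $A_\Theta\times A_\calH$, lower-bound the prior mass by combining a Shen--Ghosal-style thickness bound for the symmetrized DP mixture (\cite{shen2013adaptive}) with a volume/Laplace bound for $\theta_{S_0}$, and then upgrade the prior mass bound to a high-probability evidence lower bound via the usual second-moment (Chebyshev) argument. The decomposition of the four terms in the exponent is also diagnosed correctly, and your $\delta_n\asymp\tilde\epsilon_n/(s_0\sqrt{n\log p})$ is compatible with the requirement $(\lambda\delta_n)^{s_0}\gtrsim \exp(-Cs_0\log p)$ given $\lambda\ge\sqrt{n}/p$ and $p\ge n$; the paper makes the cruder but simpler choice $\|\theta-\theta_0\|_1\le n^{-5}$, which works because the ensuing perturbation only needs to be $O(n^{-1})$.

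The one step that would fail as written is your control of the Taylor remainder. You propose to bound the KL and variance contribution from the covariate shift $u_i=x_i^T(\theta-\theta_0)$ by a second-order expansion of $\log\eta$ and then invoke \hyperref[D3]{(D3)}. But (D3) supplies moments of the \emph{true} log-density's derivatives $\eta_0^{(k)}/\eta_0$ under $P_{\eta_0}$; it says nothing about $\dot\ell_\eta$ or $\ddot\ell_\eta$ for a generic $\eta$ ranging over the KL ball, and these are the quantities that actually appear in the Taylor remainder of $\log\eta(\,\cdot - u_i)$. There is no pointwise or $L^2(P_{\eta_0})$ comparison between $\dot\ell_\eta$ and $\dot\ell_{\eta_0}$ available at this stage of the argument (the lemmas that give such a comparison come later, require much stronger hypotheses, and would be circular here). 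The paper instead exploits the explicit structure of the prior class $\calH_{\rm mix}$ restricted to $\widetilde\calH_n$: the compactly supported base measure on $[-C'n,C'n]$ (condition (4)) together with the bandwidth lower bound $\sigma^{-2}\le\tilde\sigma_{0n}^{-2}(1+\tilde\sigma_{0n}^{2\beta})$ built into $\widetilde\calH_n$ yield a deterministic envelope $|\dot\ell_\eta(y)|\le n^2(|y|+n)$ uniformly over $\eta\in\widetilde\calH_n$, which is then integrated against the exponentially light tail of $\eta_0$ from (D2). You do mention the bandwidth cutoff and compact mixing measure, but only as ingredients of the Hellinger-to-KL upgrade; their second, equally essential role here is to furnish this envelope on $\dot\ell_\eta$. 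Rewriting that paragraph to use the envelope (and, if you want the refined second-order cancellation $\bbE_{\eta_0}\dot\ell_\eta=0$, noting explicitly that it follows from symmetry of both $\eta$ and $\eta_0$, i.e.\ (D4) plus the symmetrized prior) would close the gap; otherwise the argument does not go through.
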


\begin{proof}
	Let $\tilde{\sigma}_{0n}^\beta = \tilde{\epsilon}_n (\log (1/\tilde{\epsilon}_n))^{-1}$, and define 
	\bean\label{tilde_calH_def}
	\widetilde{\calH}_n := \left\{ \eta \in \calH_{\rm mix} : \bbE_{\eta_0}\left( \log \eta_0/\eta  \right) \le A\tilde{\epsilon}_n^2, \,\, \bbE_{\eta_0}\left( \log \eta_0/\eta  \right)^2 \le A \tilde{\epsilon}_n^2 ,  \right.  \\
	\left. \,\, \sigma^{-2} \le \tilde{\sigma}_{0n}^{-2}(1+ \tilde{\sigma}_{0n}^{2\beta})  \right\}, \quad\quad  \nonumber
	\eean
	for some constant $A>0$, and
	\bea
	\widetilde{\Theta}_n &:=& \left\{ \theta \in \Theta : \|\theta-\theta_0\|_1 \le n^{-5}, \,\, S_\theta=S_0  \right\} .
	\eea
	
	Note that
	\bea
	\int_{\Theta \times \calH_{\rm mix}} R_n(\theta,\eta) d\Pi(\theta,\eta) 
	&\ge& \int_{\widetilde{\Theta}_n \times \widetilde{\calH}_n} R_n(\theta,\eta) d\Pi(\theta,\eta) \\
	&=& \int_{\widetilde{\Theta}_n \times \widetilde{\calH}_n} R_n(\theta,\eta) d \widetilde{\Pi}(\theta,\eta) \cdot \Pi(\widetilde{\Theta}_n \times \widetilde{\calH}_n) ,
	\eea
	where $\widetilde{\Pi} = \Pi \,|_{\widetilde{\Theta}_n \times \widetilde{\calH}_n}$ is the restricted and renormalized prior on $\widetilde{\Theta}_n \times \widetilde{\calH}_n$, that is, $\widetilde{\Pi}(\cdot ) = \Pi(\cdot \cap \widetilde{\Theta}_n \times \widetilde{\calH}_n )/\Pi(\widetilde{\Theta}_n \times \widetilde{\calH}_n)$. We will show that
	\bean\label{prior_lb}
	\Pi (\widetilde{\Theta}_n \times \widetilde{\calH}_n) &\ge& \exp \left[ \tilde{C}_1 \left(\log \pi_p(s_0) - s_0 \log p - \lambda \|\theta_0\|_1 - n \tilde{\epsilon}_n^2  \right)  \right]
	\eean
	for some constant $\tilde{C}_1>0$ and all sufficiently large $n$, and 
	\bean\label{Rn_lb}
	&& \bbP_{\theta_0,\eta_0} \left( \int_{\widetilde{\Theta}_n \times \widetilde{\calH}_n} R_n(\theta,\eta) d \widetilde{\Pi}(\theta,\eta) \le \exp (-\tilde{C}_2 n \tilde{\epsilon}_n^2)   \right)   \nonumber \\
	&&\quad \le \,\, \frac{2(A+M^2)}{(\tilde{C}_2 - A- 2M)^2 n \tilde{\epsilon}_n^2} \quad\quad
	\eean
	for some constant $\tilde{C}_2>A+2M$. 
	Then, \eqref{prior_lb} and \eqref{Rn_lb} complete the proof by taking $C_{\rm lower} = (\tilde{C}_1 \vee \tilde{C}_2)$.

	To obtain inequality \eqref{prior_lb}, because $\Pi (\widetilde{\Theta}_n \times \widetilde{\calH}_n) = \Pi_{\Theta} (\widetilde{\Theta}_n) \, \Pi_{\calH}(\widetilde{\calH}_n)$, we derive lower bounds for $\Pi_{\Theta} (\widetilde{\Theta}_n)$ and $\Pi_{\calH}(\widetilde{\calH}_n)$ separately.
	By Lemma \ref{lemma:modif_of_shen2013}, we have
	\bean\label{tilHn_lb}
	\Pi_\calH(\widetilde{\calH}_n) &\ge& \exp(-C_{\calH}  n \tilde{\epsilon}_n^2)
	\eean
	for all sufficiently large $n$ and some constant $C_{\calH} >0$ not depending on $(n,p)$.  
	By the definition of $\Pi_{\Theta}$, we have
	\bea
	\Pi_{\Theta} (\widetilde{\Theta}_n) &=& \int_{\widetilde{\Theta}_n} d\Pi_{\Theta}(\theta) 
	= \pi_p(s_0) \binom{p}{s_0}^{-1} \int_{\widetilde{\Theta}_n} g_{S_0}(\theta_{S_0}) d\theta_{S_0}
	\eea
	and
	\bea
	&& \int_{\widetilde{\Theta}_n} g_{S_0}(\theta_{S_0}) d\theta_{S_0}  \\
	&\ge& e^{-\lambda \|\theta_0\|_1} \int_{\widetilde{\Theta}_n} g_{S_0}(\theta_{S_0} - \theta_{0,S_0}) d\theta_{S_0} \\
	&=& e^{-\lambda \|\theta_0\|_1}  \int_{\widetilde{\Theta}_n}  \left(\frac{\lambda}{2} \right)^{s_0} e^{-\lambda\|\theta_{S_0} - \theta_{0,S_0}\|_1}   d\theta_{S_0}  \\
	&\ge& e^{-\lambda \|\theta_0\|_1} \left(\frac{\lambda}{2} \right)^{s_0} e^{-\lambda n^{-5}} \int_{ \{\theta_{S_0}\in \bbR^{s_0}: \|\theta_{S_0}-\theta_{0,S_0}\|_2 \le (s_0 n^{10})^{-1/2}  \}  }  d\theta_{S_0} \\
	&\ge& e^{-\lambda \|\theta_0\|_1} \left(\frac{\lambda}{2} \right)^{s_0} e^{-\lambda n^{-1/2}} \frac{\pi^{s_0/2}}{\Gamma(s_0/2+1)} (s_0 n^{10})^{-s_0/2} .
	\eea
	Thus, the lower bound for $\Pi_\Theta(\widetilde{\Theta}_n)$ is given by
	\bea
	&&  \Pi_{\Theta} (\widetilde{\Theta}_n) \\
	&\ge&
	\pi_p(s_0) \binom{p}{s_0}^{-1}  e^{-\lambda\|\theta_0\|_1 - \lambda n^{-1/2} } \left( \frac{\lambda \sqrt{\pi}}{2 \sqrt{s_0}n^5} \right)^{s_0} \frac{1}{\Gamma(s_0/2 +1)} \\
	&\ge& \pi_p(s_0)  p^{-s_0} \Gamma(s_0+1) e^{-\lambda\|\theta_0\|_1 - \sqrt{\log p}} \left( \frac{\sqrt{\pi} \sqrt{n} /p}{ 2 \sqrt{s_0} n^5}  \right)^{s_0} \frac{1}{\Gamma(s_0/2 +1)}  \\
	&\ge& \exp\left\{ \log \pi_p(s_0) - s_0\log p - \lambda\|\theta_0\|_1 - \sqrt{\log p}  \right\}  \left( \frac{1}{\sqrt{s_0}n^5 p} \right)^{s_0} \\
	&\ge&  \exp \left\{ \log \pi_p(s_0) - s_0\log p - \lambda\|\theta_0\|_1 - \frac{1}{2}s_0\log p - s_0 \log (\sqrt{s_0} n^5 p)  \right\}   \\
	&\ge& \exp \Big[  8 \big\{\log \pi_p(s_0) - s_0\log p - \lambda\|\theta_0\|_1 \big\} \Big]
	\eea
	for all sufficiently large $n$ because we assume $p \ge n$.
	Thus, 
	\bea
	&& \Pi (\widetilde{\Theta}_n \times \widetilde{\calH}_n)  \\
	&=& \Pi_{\Theta} (\widetilde{\Theta}_n) \Pi_\calH(\widetilde{\calH}_n) \\
	&\ge& \exp \Big[  8 \big\{\log \pi_p(s_0) - s_0\log p - \lambda\|\theta_0\|_1 \big\} \Big]  \exp(-C_{\calH}  n \tilde{\epsilon}_n^2) \\
	&\ge& \exp \Big[  (8 \vee C_{\calH} ) \big\{\log \pi_p(s_0) - s_0\log p - \lambda\|\theta_0\|_1 - n \tilde{\epsilon}_n^2\big\} \Big] ,
	\eea
	which implies \eqref{prior_lb} by taking $\tilde{C}_1 = (8 \vee C_{\calH} ) $.

	By the Jensen's inequality, 
	\bean
	&& \bbP_{\theta_0, \eta_0} \left( \int_{\widetilde{\Theta}_n \times \widetilde{\calH}_n} R_n(\theta,\eta) d \widetilde{\Pi}(\theta,\eta) \le \exp (-\tilde{C}_2 n \tilde{\epsilon}_n^2)   \right) \nonumber \\
	&\le& \bbP_{\theta_0, \eta_0} \left( \int_{\widetilde{\Theta}_n \times \widetilde{\calH}_n} \sum_{i=1}^n \Big\{ \log  \frac{\eta(Y_i- x_i^T \theta)}{\eta_0(Y_i- x_i^T \theta_0)} \Big\} d \widetilde{\Pi}(\theta,\eta) \le -\tilde{C}_2 n \tilde{\epsilon}_n^2    \right) \nonumber\\
	&=& \bbP_{\theta_0, \eta_0} \left(  \sqrt{n}( \widetilde{\bbP}_n - P_0 ) \le - \tilde{C}_2 \sqrt{n}\tilde{\epsilon}_n^2 - \sqrt{n} P_0   \right), \label{PnminusP0}
	\eean
	where $\widetilde{\bbP}_n := n^{-1} \sum_{i=1}^n \int_{\widetilde{\Theta}_n \times \widetilde{\calH}_n} \log [\eta(Y_i- x_i^T \theta)/\eta_0(Y_i- x_i^T \theta_0)] d \widetilde{\Pi}(\theta,\eta)$ and $P_0 := \bbE_{\theta_0,\eta_0}[\widetilde{\bbP}_n]$.
	Note that
	\bea
	-P_0 &\le&  \max_{i} \,\,  \bbE_{\theta_0,\eta_0} \left[ \int_{\widetilde{\Theta}_n \times \widetilde{\calH}_n} \log \frac{\eta_0(Y_i- x_i^T \theta_0)}{\eta(Y_i- x_i^T \theta)} d \widetilde{\Pi}(\theta,\eta) \right] \\
	&=& \max_i \int_{\widetilde{\Theta}_n \times \widetilde{\calH}_n} \Eaa \left( \log \frac{\eta_0(Y_i- x_i^T \theta_0)}{\eta(Y_i- x_i^T \theta)} \right)  d \widetilde{\Pi}(\theta,\eta)  \\
	&=& \max_i \int_{\widetilde{\Theta}_n \times \widetilde{\calH}_n} \Eaa \left( \log \frac{\eta_0(Y_i- x_i^T \theta_0)}{\eta(Y_i- x_i^T \theta_0)} + \log \frac{\eta(Y_i- x_i^T \theta_0)}{\eta(Y_i- x_i^T \theta)} \right) d \widetilde{\Pi}(\theta,\eta) \\
	&\le& A \tilde{\epsilon}_n^2 + \max_i \int_{\widetilde{\Theta}_n \times \widetilde{\calH}_n} \int   \log \frac{\eta(y_i- x_i^T \theta_0)}{\eta(y_i- x_i^T \theta)}  \eta_0(y_i-x_i^T\theta_0) \, dy_i \, \, d \widetilde{\Pi}(\theta,\eta)	
	\eea
	and
	\bean\label{elleta_eta0}
	&&\int   \log \frac{\eta(y- x^T \theta_0)}{\eta(y- x^T \theta)}  \eta_0(y-x^T\theta_0) dy \nonumber \\
	&\le& | x^T(\theta-\theta_0)| \int | \dot{\ell}_\eta(y - x^T \theta_0 + t x^T(\theta_0-\theta) ) | \eta_0(y-x^T\theta_0) dy
	\eean
	for some $t\in [0,1]$ by the mean value theorem. 
	Note that for any $y\in \bbR$,
	\bea
	\sup_{\eta\in \widetilde{\calH}_n}|\dot{\ell}_\eta(y)| &\le& \sup_{\eta\in \widetilde{\calH}_n} \frac{\frac{1}{\sigma^2}\int |y-z| \phi_\sigma(y-z) d\widebar{F}(z) }{\int \phi_\sigma(y-z) d\widebar{F}(z)} \\
	&\le& \sup_{\eta\in \widetilde{\calH}_n} \frac{1}{\sigma^2} (|y| + C'n)   \\
	&\le& \tilde{\sigma}_{0n}^{-2} ( 1+ \tilde{\sigma}_{0n}^{2\beta}) ( |y| + C' n ) \\
	&\le& n^2 ( |y| + n)
	\eea
	for all sufficiently large $n$.
	The above supremum is essentially taken over $(F, \sigma)$ satisfying \eqref{tilde_calH_def} because of definitions of \eqref{Hmix_def} and \eqref{tilde_calH_def}.
	Thus, the right hand side of \eqref{elleta_eta0} is bounded above by
	\bea
	&& M\sqrt{\log p}\, \|\theta - \theta_0\|_1 \int n^2 \left( |y-x^T\theta_0| + |x^T(\theta-\theta_0)| + n  \right) \eta_0(y-x^T\theta_0) dy \\
	&\le& M\sqrt{\log p}\, \|\theta - \theta_0\|_1  n^2 \\
	&& \times \left\{ \int |y-x^T\theta_0| \eta_0(y- x^T\theta_0) dy + M \sqrt{\log p}\|\theta-\theta_0\|_1 + n  \right\} \\
	&\le& 2M \sqrt{\log p} \,\, n^{-2} \,\, \le \,\, 2M n^{-1}
	\eea  
	for all sufficiently large $n$ on $\widetilde{\Theta}_n \times \widetilde{\calH}_n$, because we assume condition \hyperref[D2]{(D2)} and $\log p \le n^2$. 
	Therefore, \eqref{PnminusP0} is bounded above by
	\bea
	&& \bbP_{\theta_0,\eta_0} \left(  \sqrt{n}( \widetilde{\bbP}_n - P_0 ) \le - \tilde{C}_2 \sqrt{n}\tilde{\epsilon}_n^2  + \sqrt{n}(A \tilde{\epsilon}_n^2 + 2M n^{-1} ) \right)  \\
	&\le& \bbP_{\theta_0,\eta_0} \left(  \sqrt{n}( \widetilde{\bbP}_n - P_0 ) \le - (\tilde{C}_2 - A- 2M) \sqrt{n}\tilde{\epsilon}_n^2   \right)  \\
	&\le& \frac{1}{(\tilde{C}_2 - A- 2M)^2 n \tilde{\epsilon}_n^4} \\
	&&  \times \max_i \V_{\theta_0,\eta_0}\left[ \int_{\widetilde{\Theta}_n \times \widetilde{\calH}_n}  \log \eta(Y_i- x_i^T \theta) - \log\eta_0(Y_i- x_i^T \theta_0) d \widetilde{\Pi}(\theta,\eta)  \right]  \\
	&\le& \frac{1}{(\tilde{C}_2 - A- 2M)^2 n \tilde{\epsilon}_n^4}  \\
	&&  \times  \max_i  \Eaa  \left[ \int_{\widetilde{\Theta}_n \times \widetilde{\calH}_n}  \left(\log \eta(Y_i- x_i^T \theta) - \log \eta_0(Y_i- x_i^T \theta_0) \right) d \widetilde{\Pi}(\theta,\eta)  \right]^2 \\
	&\le& \frac{1}{(\tilde{C}_2 - A- 2M)^2 n \tilde{\epsilon}_n^4} \\
	&&  \times \max_i  \Eaa  \left[ \int_{\widetilde{\Theta}_n \times \widetilde{\calH}_n}  \left(\log \eta(Y_i- x_i^T \theta) - \log \eta_0(Y_i- x_i^T \theta_0) \right)^2 d \widetilde{\Pi}(\theta,\eta)  \right]\\
	&=& \frac{1}{(\tilde{C}_2 - A- 2M)^2 n \tilde{\epsilon}_n^4} \max_i   \int_{\widetilde{\Theta}_n \times \widetilde{\calH}_n}  \Eaa\left(\log \frac{ \eta_0(Y_i- x_i^T \theta_0)}{\eta(Y_i- x_i^T \theta)} \right)^2 d \widetilde{\Pi}(\theta,\eta)  
	\eea
	for all sufficiently large $n$ and any constant $\tilde{C}_2>A+2M$. 
	The second and fourth inequalities follow from the Chebyshev's inequality and Jensen's inequality, respectively. 
	Note that 
	\bea
	&& \Eaa\left(\log \frac{ \eta_0(Y_i- x_i^T \theta_0)}{\eta(Y_i- x_i^T \theta)} \right)^2 \\
	&\le& 2 \Eaa\left(\log \frac{ \eta_0(Y_i- x_i^T \theta_0)}{\eta(Y_i- x_i^T \theta_0)} \right)^2 
	+ 2 \Eaa\left(\log \frac{ \eta(Y_i- x_i^T \theta_0)}{\eta(Y_i- x_i^T \theta)} \right)^2 \\
	&\le& 2 A \tilde{\epsilon}_n^2 + 2 \Eaa\left(\log \frac{ \eta(Y_i- x_i^T \theta_0)}{\eta(Y_i- x_i^T \theta)} \right)^2
	\eea
	and 
	\bea
	&& \int\left(\log \frac{ \eta(y- x^T \theta_0)}{\eta(y- x^T \theta)} \right)^2 \eta_0(y-x^T\theta_0) dy  \\
	&\le& \{x^T (\theta-\theta_0) \}^2 \int \big|  \dot{\ell}_\eta (y- x^T\theta_0 + t x^T(\theta_0-\theta))  \big|^2 \eta_0(y- x^T\theta_0) dy \\
	&\le& M^2 \log p \|\theta-\theta_0 \|_1^2 n^4 \Big\{ \int 2y^2 \eta_0(y)dy + 4 M^2\log p  \|\theta-\theta_0\|_1^2 + 4n^2 \Big\} \\
	&\le&   M^2  n^{-1}
	\eea
	for all sufficiently large $n$ on $\widetilde{\Theta}_n \times \widetilde{\calH}_n$.
	Thus, we have
	\bea
	\bbP_{\theta_0, \eta_0} \left( \int_{\widetilde{\Theta}_n \times \widetilde{\calH}_n} R_n(\theta,\eta) d \widetilde{\Pi}(\theta,\eta) \le \exp (-\tilde{C}_2 n \tilde{\epsilon}_n^2)   \right) 
	&\le& \frac{2(A+M^2)}{(\tilde{C}_2 - A- 2M)^2 n \tilde{\epsilon}_n^2}
	\eea
	for all sufficiently large $n$, which completes the proof. \hfill $\blacksquare$
\end{proof}

\begin{lemma}\label{lemma:modif_of_shen2013}
	Under the conditions in Lemma \ref{lemma:dem_lb},
	\bea
	\Pi_\calH(\widetilde{\calH}_n) &\ge& \exp(- C_{\calH} n \tilde{\epsilon}_n^2),
	\eea
	for some constant $C_{\calH} >0$ not depending on $(n,p)$, where $\widetilde{\calH}_n$ and $\tilde{\epsilon}_n$ are defined at \eqref{tilde_calH_def} and Lemma \ref{lemma:dem_lb}, respectively.
\end{lemma}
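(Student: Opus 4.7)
The proof closely follows the prior-mass argument for location mixtures of Gaussians given by \cite{shen2013adaptive}, with only a minor adjustment to handle the symmetrization $\widebar{F} = (F + F^-)/2$. Set $\sigma_0 := \tilde{\sigma}_{0n}$. I proceed in three steps: (i) approximate $\eta_0$ by a finite symmetric Gaussian mixture with bandwidth $\sigma_0$, (ii) lower-bound the symmetrized DP-$G$ prior on a neighborhood of this approximating pair, and (iii) verify that every $\eta$ in this neighborhood lies in $\widetilde{\calH}_n$.

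For step (i), under \hyperref[D1]{(D1)}--\hyperref[D3]{(D3)} the construction in \cite{shen2013adaptive} (their Lemma 2 combined with their Theorem 3) yields a discrete probability measure $F^\star = \sum_{j=1}^N p_j \, \delta_{z_j}$ supported on $[-a_n, a_n]$ with $a_n \asymp (\log(1/\tilde{\epsilon}_n))^{1/\tau}$ and $N \lesssim \sigma_0^{-1}(\log(1/\tilde{\epsilon}_n))^{1+1/\tau}$, such that $\eta^\star := \int \phi_{\sigma_0}(\cdot - z)\, dF^\star(z)$ satisfies $\bbE_{\eta_0}(\log \eta_0/\eta^\star)^k \lesssim \tilde{\epsilon}_n^2$ for $k=1,2$. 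Because of \hyperref[D4]{(D4)} the density $\eta_0$ is symmetric, so replacing $F^\star$ by its own symmetrization $\widebar{F^\star}$ preserves the KL bounds; we therefore assume $F^\star = \widebar{F^\star}$.

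For step (ii), I would partition $[-a_n, a_n]$ into $N$ cells of diameter $O(\sigma_0 \tilde{\epsilon}_n^2)$ centered at the $z_j$'s and let $\calA$ be the event that $F$ assigns probability within $\tilde{\epsilon}_n^2/N$ of $p_j$ to the $j$-th cell. The classical Dirichlet concentration bound for DP priors, combined with the assumption that $\widebar{\alpha}$ has a positive density on $[-C'n, C'n] \supset [-a_n, a_n]$, gives $DP(\alpha)(\calA) \ge \exp(-C_1 N \log(1/\tilde{\epsilon}_n))$. Since $F^\star$ is symmetric, the triangle inequality $\|\widebar{F} - F^\star\|_{\rm TV} \le \|F - F^\star\|_{\rm TV}$ transfers closeness of $F$ to closeness of $\widebar{F}$ with at most a factor of two, so on $\calA$ the symmetrized mixture approximates $\eta^\star$ in the same KL sense. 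Condition \eqref{G3} contributes a factor $\exp(-C_2 \sigma_0^{-\kappa})$ via a small window of $\sigma^{-2}$ around $\sigma_0^{-2}$. The specific choice $t_0 = \{\kappa^*(1+\tau^{-1}+\beta^{-1})+1\}/(2+\kappa^*\beta^{-1})$ is precisely tuned so that both $N\log(1/\tilde{\epsilon}_n) \lesssim n\tilde{\epsilon}_n^2$ and $\sigma_0^{-\kappa} \lesssim n\tilde{\epsilon}_n^2$ hold, giving the stated bound $\exp(-C_\calH n \tilde{\epsilon}_n^2)$.

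For step (iii), a short calculation using a Lipschitz-type estimate for $\eta \mapsto \log \eta$ on mixtures (as in \cite{shen2013adaptive}) shows that every $\eta = \int \phi_\sigma(\cdot - z)\, d\widebar{F}(z)$ with $\sigma$ in the selected window and $\widebar{F}$ close to $F^\star$ as in (ii) satisfies the two KL bounds in the definition of $\widetilde{\calH}_n$, while the truncation $\sigma^{-2} \le \tilde{\sigma}_{0n}^{-2}(1+\tilde{\sigma}_{0n}^{2\beta})$ is automatic because the window lies above $\sigma_0/(1+O(\tilde{\epsilon}_n^2))$. The main obstacle is bookkeeping: reconciling the symmetrization with the various closeness metrics used in \cite{shen2013adaptive}, and verifying that the joint exponent $N\log(1/\tilde{\epsilon}_n) + \sigma_0^{-\kappa}$ is indeed of order $n\tilde{\epsilon}_n^2$ with the given $t_0$. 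The symmetrization itself is essentially cost-free because $\widebar{F^\star} = F^\star$; no new ideas beyond those in \cite{shen2013adaptive} are required.
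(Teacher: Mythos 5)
Your outline tracks the paper's: approximate $\eta_0$ by a finite Gaussian mixture using Shen--Tokdar--Ghosal machinery, lower-bound the Dirichlet prior mass on a neighborhood of that mixture via the partition/small-ball trick, use \eqref{G3} for the $\sigma$-window, and tune $t_0$ so both exponents are $O(n\tilde{\epsilon}_n^2)$. But two steps that you treat as bookkeeping are where the actual work lies, and one of them is not filled in at all.

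First, you assert that Lemma 2 and Theorem 3 of Shen et al.\ directly produce a discrete $F^\star$ with $\bbE_{\eta_0}(\log \eta_0/\eta^\star)^k \lesssim \tilde{\epsilon}_n^2$ for $k=1,2$. What those results (together with Corollary B1) give is a \emph{Hellinger} bound $d_H(\eta_0, \eta_{F^\star,\sigma_0}) \lesssim \sigma_0^\beta$. Converting this to the two KL moment bounds that define $\widetilde{\calH}_n$ is nontrivial: one must control the lower tail of $\eta_{\bar{F},\sigma}/\eta_0$ both on a compact interval $[-a_\sigma, a_\sigma]$ (via the partition cells and the requirement $\min_j F(U_j) \gtrsim \tilde{\epsilon}_n^{4b_1}$) and on its complement (via a Gaussian tail estimate), and then invoke Shen's Lemma B2 together with condition \hyperref[D3]{(D3)} to pass from Hellinger plus lower-tail control to KL. In the paper this occupies roughly a page and requires tracking the quantity $\vartheta = \tilde{\sigma}_{0n}^{-1}\tilde{\epsilon}_n^{4b_1}K_1/2$ and the set $E_{\tilde{\sigma}_{0n}}$. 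Your proposal does not acknowledge this step.

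Second, the symmetrization is exactly where the lemma departs from a verbatim citation of Shen, and you declare it ``essentially cost-free'' because you can take $F^\star = \widebar{F^\star}$. That is not justified: Shen's approximating $\tilde{h}_\sigma$ and its discretization are not constructed to be symmetric, and if you do symmetrize $F^\star$ you must re-verify that both KL moments are preserved, which requires more than the joint convexity of KL (the second moment $\bbE_{\eta_0}(\log \eta_0/\eta)^2$ does not follow from convexity alone and needs a separate monotonicity argument in the mixture components). The paper instead avoids symmetrizing $F^\star$ at all: using only symmetry of $\eta_0$ via \hyperref[D4]{(D4)} it proves the inequality $d_H^2(\eta_0, \eta_{\bar F,\sigma}) \le 2\, d_H^2(\eta_0, \eta_{F,\sigma})$ by a pointwise case analysis on which of $|\sqrt{\eta_0}-\sqrt{\eta_{F,\sigma}}|$, $|\sqrt{\eta_0}-\sqrt{\eta_{F^-,\sigma}}|$ is larger, and it uses symmetry again when lower-bounding $\eta_{\bar F,\sigma}(x)/\eta_0(x)$ by $F(U_{J(x)}) \wedge F(U_{J(-x)})$. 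These are precisely the ``new ideas beyond Shen'' you claim are not required; without some version of them (yours or the paper's) the lemma does not follow.

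Your TV observation $\|\widebar F - F^\star\|_{\rm TV} \le \|F - F^\star\|_{\rm TV}$ for symmetric $F^\star$ is correct but is a different metric from the weighted-$\ell_1$ closeness used in the Ghosal--van der Vaart small-ball lemma, and you would still need to connect it to the Hellinger/KL control of the resulting mixture. In short: the high-level structure is right, but the two pieces you wave at as routine -- the Hellinger-to-KL conversion and the handling of the symmetrization -- are the substantive content of this lemma, and neither is supplied.
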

\begin{proof}
	We closely follow the steps in the proof of Theorem 4 in \cite{shen2013adaptive}.
	We consider the univariate density case while the original proof in \cite{shen2013adaptive} considers $d$-dimensional case.
	
	By Proposition 1 in \cite{shen2013adaptive}, there exist constants $\delta, s_0, a_0, B_0$ and $K_0$ not depending on $(n,p)$ such that 
	\bean\label{Lem2_1}
	d_H (\eta_0, K_\sigma \tilde{h}_{\sigma}  )&\le& K_0 \sigma^{\beta} 
	\eean
	and 
	\bea
	\bbP_{\theta_0, \eta_0} (E_\sigma^c) &\le& B_0 \sigma^{4\beta+2\nu+8}
	\eea
	for any $\sigma \in (0, s_0)$, where $K_\sigma \tilde{h}_{\sigma} = \int \phi_\sigma(x-z) \tilde{h}_\sigma(z) dz$, $\tilde{h}_\sigma$ is a probability density function with support inside $(- a_\sigma, a_\sigma)$, $a_\sigma = a_0 \{\log (1/\sigma) \}^\tau$ and $E_\sigma := \{x \in \bbR : \eta_0(x) \ge \sigma^{(4\beta +2\nu + 8)/\delta}  \} \subset \{ x\in\bbR: |x| \le a_\sigma \}$.
	Fix $b_1 > \{1 \vee 1/(2\beta) \}$ such that $\tilde{\epsilon}_n^{b_1} \{\log (1/\tilde{\epsilon}_n) \}^{5/4} \le \tilde{\epsilon}_n$.
	Let $S_{\tilde{\sigma}_{0n}} = \{\sigma>0 : \sigma^{-2} \in  [\tilde{\sigma}_{0n}^{-2}, \tilde{\sigma}_{0n}^{-2}(1+ \tilde{\sigma}_{0n}^{2\beta}) ]  \}$, where $\tilde{\sigma}_{0n} = \tilde{\epsilon}_n^{1/\beta}\{ \log(1/\tilde{\epsilon}_n)\}^{-1/\beta}$.
	Suppose that $\sigma \in S_{\tilde{\sigma}_{0n}}$.

	By Corollary B1 in \cite{shen2013adaptive}, there exists a probability measure $F_\sigma = \sum_{j=1}^N p_j \delta_{z_j}$ satisfying
	\bean\label{Lem2_2}
	d_H (K_\sigma \tilde{h}_{\sigma}  , \eta_{{F}_\sigma,\sigma} ) &\le& \tilde{A}_1 \tilde{\epsilon}_n^{b_1} \{\log (1/\tilde{\epsilon}_n) \}^{1/4} ,
	\eean
	where $N \le D_0 \sigma^{-1} \{\log (1/\sigma) \}^{1/\tau} \log (1/\tilde{\epsilon}_n)  $, $z_i \in [- a_\sigma,a_\sigma]$ $(i=1,\ldots, n)$  and $\min_{i\neq j} |z_i - z_j| \ge \sigma \tilde{\epsilon}_n^{2b_1}$, 
	for some universal constants $\tilde{A}_1$ and $D_0>0$.
	Note that $N \le D_0 \sigma^{-1} \{\log (1/\sigma) \}^{1/\tau} \log (1/\tilde{\epsilon}_n)  \le D_1 \sigma^{-1} \{\log (1/\tilde{\epsilon}_n)\}^{1+1/\tau}$ for some universal constant $D_1>0$.

	Let $U_j = \{x\in\bbR: |x-z_j| \le \sigma \tilde{\epsilon}_n^{2b_1}/4 \}$ for all $j=1,\ldots, N$.
	Then, one can choose $U_{N+1},\ldots, U_K$ such that 
	(i) $\{U_1,\ldots, U_K\}$ is a partition of $[-a_\sigma, a_\sigma]$, 
	(ii) each $U_j \, (j=N+1,\ldots, K)$ has a diameter at most $\sigma$ and 
	(iii) $K\le D_2 \sigma^{-1} \{\log (1/\tilde{\epsilon}_n)\}^{1+1/\tau} $
	for some universal constant $D_2>0$.
	Furthermore, one can extend this to a partition $\{U_1,\ldots, U_M\}$ of $[-C'n, C'n]$ such that $M \le  D_2' \sigma^{-1} \{\log (1/\tilde{\epsilon}_n)\}^{1+1/\tau} \le D_2' \tilde{\epsilon}_n^{-1/\beta} \{\log (1/\tilde{\epsilon}_n)\}^{1+1/\tau+ 1/\beta} $ and  $D_3 \sigma \tilde{\epsilon}_n^{2b_1} \le \alpha(U_j) \le 1$ for all $j=1,\ldots, M$ and for some universal constants $D_2'$ and $D_3>0$ because of the continuity and positivity of $\alpha$.

	Let $p_j=0$ for all $j=N+1,\ldots, M$.
	Define $\calP_{\tilde{\sigma}_{0n}}$ as the set of probability measures $F$ on $[-C'n, C'n]$ such that 
	\bea
	\sum_{j=1}^M | F(U_j) - p_j |  \le 2 \tilde{\epsilon}_n^{2b_1} \,\,\text{ and }\,\, 	\min_{1\le j \le M} F(U_j) \ge \frac{1}{2} \tilde{\epsilon}_n^{4b_1}  .
	\eea
	Then, we have 
	$\tilde{\epsilon}_n^{2b_1} M \le D_2' \tilde{\epsilon}_n^{2b_1 - 1/\beta} \{ \log(1/\tilde{\epsilon}_n ) \}^{1+ 1/\tau + 1/\beta}  \le 1$ and $\min_{1\le j \le M}$ $\alpha(U_j) \ge D_3 \sigma \tilde{\epsilon}_n^{2b_1} \ge D_3 \tilde{\epsilon}_n^{4b_1}$ for all large $n$.
	By Lemma 10 in \cite{ghosal2007posterior}, 
	\bea
	\pi (\calP_{\tilde{\sigma}_{0n}} )  &\ge& C_1 \exp \big\{ - c_1 M \log (1/ \tilde{\epsilon}_n)  \big\} \\
	&\ge& C_1 \exp \big[  - c_1 D_2' \tilde{\epsilon}_n^{-1/\beta} \{\log (1/\tilde{\epsilon}_n)\}^{2+1/\tau +1/\beta}     \big]
	\eea
	for some universal constants $C_1$ and $c_1>0$.
	In fact, $C_1=\Gamma (\alpha([-C'n, C'n ]))$, but it can be replaced with a universal constant not depending on $n$ by considering $\Gamma (\alpha([-C'n, C'n ])) \ge \Gamma (\alpha([-C', C' ])) = : C_1$.
	Also note that, by \eqref{G3}, 
	\bea
	\pi (S_{\tilde{\sigma}_{0n}}) 
	&\ge& a_6 \tilde{\sigma}_{0n}^{-2a_4} \tilde{\sigma}_{0n}^{2\beta a_5} \exp (-C''  \tilde{\sigma}_{0n}^{2\beta} )    \\
	&\ge& D_4 \exp \big[  - D_5 \tilde{\epsilon}_n^{-\kappa/ \beta}  \{\log(1/\tilde{\epsilon}_n) \}^{\kappa/\beta}   \big]
	\eea
	for some universal constant $D_4>0$ and some constant $D_5>0$ depending only on $C''>0$ in \eqref{G3}.
	Therefore, by Lemma B1 in \cite{shen2013adaptive} with $V_j = U_j$ for $j=1,\ldots, N$ and $V_0 = \cup_{j=N+1}^M U_j$, we have
	\bean\label{Lem2_3}
	d_H(  \eta_{{F}_\sigma,\sigma} , \eta_{{F},\sigma}  )   &\le& \tilde{A}_2 \tilde{\epsilon}_n^{b_1}
	\eean
	for any $F\in \calP_{\tilde{\sigma}_{0n}}$, $\sigma \in S_{\tilde{\sigma}_{0n}}$ and some constant $\tilde{A}_2>0$ not depending on $(n,p)$.	
	Thus, by \eqref{Lem2_1}--\eqref{Lem2_3}, 
	\bea
	d_H( \eta_0,  \eta_{{F},\sigma})  &\le& \tilde{A}_3 \tilde{\sigma}_{0n}^\beta
	\eea
	for any $F\in \calP_{\tilde{\sigma}_{0n}}$, $\sigma \in S_{\tilde{\sigma}_{0n}}$ and some constant $\tilde{A}_3>0$ not depending on $(n,p)$.
	Note that $d_H^2(\eta_0, \eta_{{F},\sigma}) = d_H^2(\eta_0, \eta_{{F^-{}},\sigma})$ due to condition \hyperref[D4]{(D4)} and
	\bea
	d_H^2(\eta_0, \eta_{\bar{F},\sigma}) &=& \int \big( \sqrt{\eta_0} - \sqrt{\eta_{\bar{F},\sigma}}  \big)^2  d\mu  \\
	&=& \int \big( \sqrt{\eta_0} - \sqrt{(\eta_{F,\sigma}+ \eta_{F^{-},\sigma})/2}  \big)^2  d\mu  \\
	&=& \int_{|\sqrt{\eta_0} - \sqrt{\eta_{F,\sigma}} | > |\sqrt{\eta_0} - \sqrt{\eta_{F^{-},\sigma}} |} \big( \sqrt{\eta_0} - \sqrt{(\eta_{F,\sigma}+ \eta_{F^{-},\sigma})/2}  \big)^2  d\mu  \\
	&+& \int_{|\sqrt{\eta_0} - \sqrt{\eta_{F,\sigma}} | \le |\sqrt{\eta_0} - \sqrt{\eta_{F^{-},\sigma}} |} \big( \sqrt{\eta_0} - \sqrt{(\eta_{F,\sigma}+ \eta_{F^{-},\sigma})/2}  \big)^2  d\mu  \\
	&\le& \int_{|\sqrt{\eta_0} - \sqrt{\eta_{F,\sigma}} | > |\sqrt{\eta_0} - \sqrt{\eta_{F^{-},\sigma}} |} \big( \sqrt{\eta_0} - \sqrt{\eta_{F,\sigma}}  \big)^2  d\mu  \\
	&+& \int_{|\sqrt{\eta_0} - \sqrt{\eta_{F,\sigma}} | \le |\sqrt{\eta_0} - \sqrt{\eta_{F^{-},\sigma}} |} \big( \sqrt{\eta_0} - \sqrt{\eta_{F^{-},\sigma}}  \big)^2  d\mu \\
	&\le& d_H^2(\eta_0, \eta_{{F},\sigma}) + d_H^2(\eta_0, \eta_{{F^-{}},\sigma}) = 2d_H^2(\eta_0, \eta_{{F},\sigma}).
	\eea
	Therefore, we have 
	\bea
	d_H( \eta_0,  \eta_{\bar{F},\sigma})  &\le& \sqrt{2} \tilde{A}_3 \tilde{\sigma}_{0n}^\beta
	\eea
	for any $F\in \calP_{\tilde{\sigma}_{0n}}$ and $\sigma \in S_{\tilde{\sigma}_{0n}}$.

	Note that for any $F\in \calP_{\tilde{\sigma}_{0n}}$, $\sigma \in S_{\tilde{\sigma}_{0n}}$ and $x\in [-a_\sigma,a_\sigma]$,
	\bea
	\frac{\eta_{\bar{F},\sigma}(x)}{\eta_0(x)} 
	&\ge& \{ \sup_{t\in \bbR} \eta_0(t) \}^{-1}  (2\pi \tilde{\sigma}_{0n}^2)^{-1/2} \int  \exp \Big\{ - \frac{(x-z)^2}{2 \tilde{\sigma}_{0n}^2}  \Big\}  d\bar{F}(z)  \\
	&\ge& K_1 \tilde{\sigma}_{0n}^{-1} \{F( U_{J(x)} )  \wedge F( U_{J(-x)} )\}  \,\, \ge \,\, \frac{K_1}{2} \tilde{\sigma}_{0n}^{-1} \tilde{\epsilon}^{4b_1}
	\eea
	for some universal constant $K_1>0$, where $J(x)$ is the index $j\in\{1,\ldots, M\}$ for which $x \in U_j$.
	On the other hand, for any $F\in \calP_{\tilde{\sigma}_{0n}}$, $\sigma \in S_{\tilde{\sigma}_{0n}}$ and $x\notin [-a_\sigma,a_\sigma]$, 
	\bea
	\frac{\eta_{\bar{F},\sigma}(x)}{\eta_0(x)} 
	&\ge&  K_1 \tilde{\sigma}_{0n}^{-1}  \int_{|z| \le a_\sigma} \exp \Big\{ - \frac{(x-z)^2}{2 \tilde{\sigma}_{0n}^2}  \Big\}  d\bar{F}(z)  \\
	&\ge& K_1 \tilde{\sigma}_{0n}^{-1}  \exp \Big( - \frac{2x^2}{\tilde{\sigma}_{0n}^2}  \Big) F \big( Z:  |Z| \le a_\sigma \big)  \\
	&\ge& K_1 \tilde{\sigma}_{0n}^{-1}   \exp \Big( - \frac{2x^2}{\tilde{\sigma}_{0n}^2}  \Big) (1 -2 \tilde{\epsilon}_n^{2b_1} ) \\
	&\ge& \frac{K_1}{2} \tilde{\sigma}_{0n}^{-1}   \exp \Big( - \frac{2x^2}{\tilde{\sigma}_{0n}^2}  \Big) 
	\eea
	for all large $n$.
	The third inequality holds because $F\in \calP_{\tilde{\sigma}_{0n}}$.
	Define $\vartheta = \tilde{\sigma}_{0n}^{-1}  \tilde{\epsilon}_n^{4b_1} K_1 /2$, then $\log (1/\vartheta)  \le K_2 \log (1/\tilde{\epsilon}_n)$ for some constant $K_2>0$ depending only on $b_1$.
	Then, for any $F\in \calP_{\tilde{\sigma}_{0n}}$ and $\sigma \in S_{\tilde{\sigma}_{0n}}$, 
	\bea
	&&\bbE_{\eta_0} \Big[ \Big\{ \log \big(\frac{\eta_0}{\eta_{\bar{F},\sigma} } \big)  \Big\}^2 I \Big( \frac{\eta_{\bar{F},\sigma} }{\eta_0} \le \vartheta \Big)  \Big] \\
	&\le&  \int_{|x|> a_{\tilde{\sigma}_{0n}} } \Big\{ \log \big(\frac{\eta_0(x)}{\eta_{\bar{F},\sigma}(x) } \big)  \Big\}^2  \eta_0(x)  dx \\
	&\le&  \int_{|x|> a_{\tilde{\sigma}_{0n}} } \Big[  \log \Big\{ \frac{2\tilde{\sigma}_{0n} }{K_1}  \exp \Big(  \frac{2x^2}{\tilde{\sigma}_{0n}^2}  \Big) \Big\}  \Big]^2 \eta_0(x) dx \\
	&\le& \frac{K_3}{\tilde{\sigma}_{0n}^4}  \int_{|x|> a_{\tilde{\sigma}_{0n}} } x^4 \eta_0(x)  dx  \\
	&\le&  \frac{K_3}{\tilde{\sigma}_{0n}^4}  \Big( \bbE_{\eta_0} X^8 \Big)^{1/2} \bbP_{\eta_0} ( E_{\tilde{\sigma}_{0n}}^c )  \\
	&\le& K_4 \tilde{\sigma}_{0n}^{2\beta+ \nu}
	\eea
	for some constants $K_3$ and $K_4>0$ not depending on $(n,p)$ by construction of $E_{\tilde{\sigma}_{0n}}$.
	Since $\vartheta < e^{-1}$, it implies that 
	\bea
	\bbE_{\eta_0} \Big\{ \log \big(\frac{\eta_0}{\eta_{\bar{F},\sigma} } \big)   I \Big( \frac{\eta_{\bar{F},\sigma} }{\eta_0} \le \vartheta \Big)  \Big\}  
	&\le& K_4 \tilde{\sigma}_{0n}^{2\beta+ \nu}  .
	\eea

	Therefore, by Lemma B2 in \cite{shen2013adaptive}, for any $F\in \calP_{\tilde{\sigma}_{0n}}$ and $\sigma \in S_{\tilde{\sigma}_{0n}}$, 
	\bea
	&& \bbE_{\eta_0}\Big\{  \log \Big(\frac{\eta_0}{\eta_{\bar{F},\sigma}}  \Big)   \Big\} \\
	&\le& d_H^2( \eta_0,  \eta_{\bar{F},\sigma}) \big\{ 1+ 2 \log (1/\vartheta)  \big\}  + 2 \bbE_{\eta_0} \Big\{ \log \big(\frac{\eta_0}{\eta_{\bar{F},\sigma} } \big)   I \Big( \frac{\eta_{\bar{F},\sigma} }{\eta_0} \le \vartheta \Big)  \Big\}    \\
	&\le& 2\tilde{A}_3^2 \tilde{\sigma}_{0n}^{2\beta} \{ 1+ 2K_2 \log (1/\tilde{\epsilon}_n) \}  + 2 K_4 \tilde{\sigma}_{0n}^{2\beta+\nu} \\
	&\le& 2\tilde{A}_3^2 (12 + 2K_2^2) \tilde{\epsilon}_n^2 
	\eea
	and 
	\bea
	&&\bbE_{\eta_0}\Big\{  \log \Big(\frac{\eta_0}{\eta_{\bar{F},\sigma}}  \Big)   \Big\}^2  \\
	&\le&  d_H^2( \eta_0,  \eta_{\bar{F},\sigma}) \big[ 12 + 2 \big\{\log (1/\vartheta)  \big\}^2 \big] + 8  \bbE_{\eta_0} \Big[ \Big\{ \log \big(\frac{\eta_0}{\eta_{\bar{F},\sigma} } \big)  \Big\}^2 I \Big( \frac{\eta_{\bar{F},\sigma} }{\eta_0} \le \vartheta \Big)  \Big] \\
	&\le& 2\tilde{A}_3^2 \tilde{\sigma}_{0n}^{2\beta}  \big[ 12 + 2 K_2^2 \{ \log(1/\tilde{\epsilon}_n) \}  \big]  + 8 K_4\tilde{\sigma}_{0n}^{2\beta+\nu} \\ 
	&\le& 2\tilde{A}_3^2 (12 + K_2^2 )  \tilde{\epsilon}_n^2 .
	\eea
	Thus, by taking $A = 2 \tilde{A}_3^2(12 + 2K_2^2)$ in $\widetilde{\calH}_n$ defined at \eqref{tilde_calH_def}, we have
	\bea
	&&\Pi_{\calH} (\widetilde{\calH}_n)  \\
	&\ge& \Pi_{\calH} \big( (F,\sigma):  F\in \calP_{\tilde{\sigma}_{0n}},  \sigma \in S_{\tilde{\sigma}_{0n}} \big)  \\
	&\ge&  C_1 D_4 \exp \Big[  - c_1 D_2' \tilde{\epsilon}_n^{-1/\beta} \{\log(1/\tilde{\epsilon}_n) \}^{2+1/\tau+1/\beta}  - D_5\tilde{\epsilon}_n^{-\kappa/\beta}  \{\log(1/\tilde{\epsilon}_n) \}^{\kappa/\beta}   \Big] \\
	&\ge& C_1 D_4 \exp \Big[  -  ( c_1 D_2' \vee D_5)  \tilde{\epsilon}_n^{\kappa^*/\beta} \{\log(1/\tilde{\epsilon}_n) \}^{2+1/\tau+\kappa^*/\beta}    \Big]  \\
	&\ge& \exp \big\{ - ( c_1 D_2' \vee D_5)  n \tilde{\epsilon}_n^2   \big\}
	\eea
	for all large $n$ and some constants $c_1, D_2'$ and $D_5>0$ not depending on $(n,p)$.
	By taking $C_{\calH} =( c_1 D_2' \vee D_5)$, it completes the proof. \hfill $\blacksquare$
\end{proof}

\bigskip

\begin{proof}[Proof of Theorem \ref{thm:dimupper}]
	Suppose $\lambda \|\theta_0\|_1 \le C_\lambda s_0 \log p$ for some constant $C_\lambda >0$.
	Let $B:= \{(\theta,\eta): s_\theta \ge R \}$ for some $R > s_0$ and $E_n$ be the event \eqref{En_event}, then we have
	\bea
	&& \Eaa \Pi(B \mid D_n) \\
	&\le& \Eaa\left[\Pi(B\mid D_n) I_{E_n} \right] + \bbP_{\theta_0, \eta_0}(E_n^c) \\
	&\le& \Eaa\left[ \frac{\int_B R_n(\theta,\eta) d\Pi(\theta,\eta) }{\int R_n(\theta,\eta) d\Pi(\theta,\eta) }  I_{E_n} \right] + o(1) \\
	&\le& \exp \left[C_{\rm lower} \big\{  -\log \pi_p(s_0) + s_0 \log p + \lambda\|\theta_0\|_1 +  n \tilde{\epsilon}_n^2 \big\} \right] \cdot \Pi(B) + o(1) \\
	&\le& \exp \left[C_{\rm lower} \big\{   (A_3+1) s_0 \log p + s_0 \log p + C_\lambda s_0 \log p +  n^{\frac{\kappa^*}{2\beta+\kappa^*}}  (\log n)^{2t_0} \big\} \right] \\
	&& \times \Pi(B) + o(1) \\
	&\le& \exp \left[C_{\rm lower}(A_3+2 +C_\lambda) \{s_0 \vee  n^{\frac{\kappa^*}{2\beta+\kappa^*}}  (\log n)^{2t_0-1}\}\log p  \right] \cdot \Pi(B) + o(1) 
	\eea
	by Lemma \ref{lemma:dem_lb} and condition \eqref{prior_p}.
	Note that
	\bea
	\Pi(B) &\le& \sum_{s=R}^p \pi_p(s_0) \left(\frac{A_2}{p^{A_4}} \right)^{s-s_0} \\
	&\le&  2\pi_p(s_0) \left(\frac{A_2}{p^{A_4}} \right)^{R-s_0}  \\
	&\le& \exp \Big\{  - (R - s_0) \frac{A_4}{2} \log p   \Big\}
	\eea
	by condition \eqref{prior_p}. Thus, we have
	\bea
	&& \Eaa \Pi(B \mid D_n) \\
	&\le& \exp \Big[ - \Big\{  (K_{\rm dim} -1) \frac{A_4}{2} - C_{\rm lower} (A_3+2+C_\lambda )  \Big\}  \\
	&& \times \{s_0 \vee  n^{\frac{\kappa^*}{2\beta+\kappa^*}}  (\log n)^{2t_0-1}\} \log p \Big]  + o(1)  \\
	&=& o(1)
	\eea
	by taking $R = K_{\rm dim} \{ s_0 \vee  n^{\frac{\kappa^*}{2\beta+\kappa^*}}  (\log n)^{2t_0-1} \}$ for some large constant $K_{\rm dim}> 1 + 2 A_4^{-1} C_{\rm lower} (A_3+2+C_\lambda ) $,  which completes the proof. \hfill $\blacksquare$
\end{proof}

\begin{proof}[Proof of Theorem \ref{thm:convrate_mH}]
	Define 
	\bea
	\Theta_n &:=& \left\{ \theta\in \Theta : \|\theta-\theta_0\|_1 \le p^2 (p+\sqrt{n}) + \|\theta_0\|_1, \,\, s_\theta \le s_n/2  \right\}
	\eea
	and for positive constants $C_1$ and $C_2$, which will be described below, define
	\bean\label{Hn}
	\begin{split}
		\calH_n &:= \Bigg\{ \eta(\cdot) = \int \phi_\sigma(\cdot -z) d\widebar{F}(z) \text{ with } F = \sum_{h=1}^\infty \pi_h \delta_{z_h} : \\ 
		& \quad\quad   z_h \in [-a_n,a_n], h\le H_n ;  \sum_{h> H_n}\pi_h < \epsilon_n ; \sigma^2 \in [\sigma_{0n}^2, \sigma_{0n}^2(1+\epsilon_n^2 )^{M_n})  \Bigg\},
	\end{split}
	\eean
	where $a_n^{a_1}= \sigma_{0n}^{-2a_2}= M_n =n, \epsilon_n^2 = C_1 s_n\log p/n$ and $H_n= \lfloor C_2 s_n \log p/\log n \rfloor$. We first prove that
	\bean
	\Eaa \Pi(\theta\in \Theta_n^c \mid D_n) &=& o(1) \quad\text{ and} \label{thetaconc} \\
	\Eaa \Pi(\eta \in \calH_n^c \mid D_n) &=& o(1). \label{etaconc}
	\eean

	Suppose $\lambda \|\theta_0\|_1 \le C_\lambda s_0 \log p$ for some constant $C_\lambda >0$.
	By Lemma \ref{lemma:dem_lb} and Theorem \ref{thm:dimupper},
	\bea
	&& \Eaa \Pi(\theta\in \Theta_n^c \mid D_n) \\
	&\le& \Eaa \Pi \big( \|\theta-\theta_0\|_1 > p^2(p+\sqrt{n}) + \|\theta_0\|_1 \mid D_n \big) + \Eaa\Pi(s_\theta > s_n/2 \mid D_n) \\
	&\le& \Eaa \left[ \Pi\left( \|\theta-\theta_0\|_1 > p^2(p+\sqrt{n}) + \|\theta_0\|_1 \mid D_n \right) I_{E_n} \right] + o(1) \\
	&\le& \Pi_{\Theta}\left( \|\theta-\theta_0\|_1> p^2(p+\sqrt{n})+\|\theta_0\|_1 \right)  \cdot \exp\Big\{  \frac{C_{\rm lower}(A_3+2+C_\lambda) }{2 K_{\rm dim}} s_n \log p \Big\} \\
	&& + o(1),
	\eea
	where $E_n$ is the event \eqref{En_event}. Note that
	\bea
	&&\Pi_{\Theta}\left( \|\theta-\theta_0\|_1> p^2(p+\sqrt{n})+\|\theta_0\|_1 \right) \\
	&\le& \Pi_\Theta \left(\|\theta\|_1 > p^2(p+\sqrt{n})  \right) \\
	&=& \sum_{s=1}^p \Pi_\Theta \left( \|\theta\|_1 > p^2(p+\sqrt{n}) \mid s_\theta=s  \right) \pi_p(s) \\
	&\le& \sum_{s=1}^p s \cdot \max_{1\le h\le s} \Pi_\Theta \left( |{\theta}_h| >p(p+\sqrt{n}) \right) \cdot p^{-A_4 s} A_2^s  \\
	&\le& p \cdot \exp \left( -\lambda p (p+\sqrt{n}) \right) \\
	&\le& \exp\left\{- \frac{1}{2} (n+p) \right\}  
	\eea
	because $\lambda p \ge \sqrt{n}$. 
	Thus, \eqref{thetaconc} holds due to condition $s_n\log p = o(n)$.	
	On the other hand, by Proposition 2 of \cite{shen2013adaptive}, 
	\bea
	&&\Pi_\calH (\calH_n^c) \\
	&\lesssim&  H_n \exp( - C'' a_n^{a_1} ) +  \left\{ \frac{e \alpha(\bbR) }{H_n} \log \frac{1}{\epsilon_n} \right\}^{H_n} + \exp(- C'' \sigma_{0n}^{-2a_2}) \\
	&& + \sigma_{0n}^{-2a_3}\left(1+ \epsilon_n^2 \right)^{-2M_na_3}  \\
	&\le&  C_2 \frac{ s_n \log p}{\log n} \exp (-C''n) + \exp (-C_2 s_n \log p) + \exp(-C'' n) \\
	&& + \exp(-C_1 a_3 s_n \log p)  \\
	&\le& \exp \Big\{  - \frac{(C_1 a_3 \wedge C_2 )}{2}s_n \log p \Big\}  .
	\eea
	Then, we have
	\bea
	&&\Eaa \Pi(\eta \in \calH_n^c \mid D_n) \\
	&\le& \Eaa \left[ \Pi\left(\eta \in \calH_n^c \mid D_n \right) I_{E_n} \right] + o(1) \\
	&\lesssim& \Pi_{\calH}(\calH_n^c) \cdot \exp\left\{ \frac{C_{\rm lower}(A_3+2+C_\lambda)}{2K_{\rm dim} } {s}_n \log p \right\} + o(1) \\
	&\le& \exp \left[ - \frac{1}{2}\Big\{ (C_1 a_3 \wedge C_2) - \frac{C_{\rm lower}(A_3+2+C_\lambda)}{2K_{\rm dim}}   \Big\}s_n \log p \right]  \,\, = \,\, o(1)
	\eea
	for some large constants $C_1$ and $C_2 > 0$.
	Thus, we have proved \eqref{thetaconc} and \eqref{etaconc}.

	By Lemma 2 and Lemma 9 of \cite{ghosal2007convergence}, if for some nonincreasing function $\epsilon \mapsto N(\epsilon)$ and some $\epsilon_n' \ge 0$,
	\bea
	N\left( \frac{\epsilon}{36}, \Theta_n\times \calH_n , d_n \right) &\le& N(\epsilon),
	\eea
	for all $\epsilon>\epsilon_n'$, then there exists test functions $\phi_n$ such that
	\bean
	\begin{split}\label{Lem2and9_GS}
		\bbP_{\theta_0,\eta_0}\phi_n &\lesssim \exp \left( -\frac{n}{2}\epsilon_n^2 + \log N(\epsilon_n) \right) \quad \text{ and} \\
		\sup_{\substack{(\theta,\eta)\in \Theta_n\times \calH_n \\ d_n((\theta,\eta),(\theta_0,\eta_0 )) > \epsilon_n } } \bbP_{\theta,\eta} (1-\phi_n) &\lesssim \exp \left( - \frac{n}{2} \epsilon_n^2 \right)
	\end{split}
	\eean
	for all $\epsilon_n > \epsilon_n'$. For any $(\theta^i, \eta_i)\in \Theta_n\times \calH_n$, $i=1,2$,
	\bea
	&&d_H^2 ( \eta_1(\cdot - x^T\theta^1) , \eta_2(\cdot-x^T\theta^2) )  \\
	&=& \int \left( \sqrt{\eta_1(y - x^T\theta^1)} - \sqrt{\eta_2(y - x^T\theta^2)}  \right)^2 dy \\
	&\le&  2\int \left\{ \sqrt{\eta_1(y - x^T\theta^1)} - \sqrt{\eta_1(y - x^T\theta^2)}  \right\}^2 dy \\
	&+& 2\int \left\{ \sqrt{\eta_1(y - x^T\theta^2)} - \sqrt{\eta_2(y - x^T\theta^2)}  \right\}^2 dy \\
	&\le& 2 \left\{ | x^T(\theta^1- \theta^2)|^2 \int \left( \int_0^1 \frac{\dot{\eta_1}(y+t d_{12})}{\sqrt{\eta_1(y+ td_{12})}} dt  \right)^2 dy +  d_H^2(\eta_1,\eta_2)  \right\} \\
	&\le& 2 \left\{   M^2 \log p \, \|\theta^1-\theta^2\|_1^2 \int_0^1\int  \left( \frac{\dot{\eta_1}(y+t d_{12})}{\eta_1(y+ td_{12})} \right)^2 \eta_1(y+t d_{12}) dy dt   \right. \\
	&& \quad\quad+ d_H^2(\eta_1,\eta_2) \Bigg\} \\ 
	&\le& 2 \Big\{  M^2 \log p \, \|\theta^1-\theta^2\|_1^2 \cdot n^{1/a_2} + d_H^2(\eta_1,\eta_2) \Big\} ,
	\eea 
	where $d_{12} := x^T(\theta^1-\theta^2)$. The last inequality holds because 
	\bea
	\left( \frac{\dot{\eta}(y)}{\eta(y)} \right)^2 \eta(y)  &=&    \frac{ \{ \dot{\eta}(y) \}^2 }{\eta(y)}  \\
	&\le& \frac{ \Big\{  \int \frac{|y-z|}{\sigma^2} \phi_\sigma(y-z) d\widebar{F}(z)  \Big\}^2  }{\eta(y)}  \\
	&\le& \int \left( \frac{y-z}{\sigma^2} \right)^2 \phi_\sigma(y-z) d \widebar{F}(z) 
	\eea
	by H\"{o}lder's inequality and  
	\bea
	\int \left( \frac{\dot{\eta}(y)}{\eta(y)} \right)^2 \eta(y) dy
	&\le& \int \int \left( \frac{y-z}{\sigma^2} \right)^2 \phi_\sigma (y -z) d \widebar{F}(z) dy \\
	&=& \frac{1}{\sigma^2}  \int \int \left( \frac{y-z}{\sigma} \right)^2 \phi_\sigma (y -z) dy  d \widebar{F}(z)  \\
	&=& \frac{1}{\sigma^2} \,\, \le \,\, \sigma_{0n}^{-2}=\,\, n^{1/a_2}.
	\eea
	
	Thus, we have
	\bea
	&& \log N\left( \frac{\epsilon}{36}, \Theta_n\times \calH_n , d_n \right) \\
	&\lesssim& 
	\log N \left( \frac{\epsilon}{72M  n^{1/(2a_2)}\sqrt{\log p} }, \Theta_n , \|\cdot\|_1 \right) + \log N \left( \frac{\epsilon}{72}, \calH_n, d_H \right) \\
	&\le& \log  \left(  \sum_{j=0}^{s_n/2} \binom{p}{j}  \Big[  \frac{p^2(p+\sqrt{n}) + \|\theta_0\|_1 }{\epsilon} 72 M \sqrt{n^{1/a_2} \log p}  \Big]^j  \right)   \\
	&& \,\,+ K \Big\{ H_n \log \Big(\frac{a_n}{\sigma_{0n} \epsilon } \Big) + H_n \log \Big( \frac{1}{\epsilon}\Big) + \log M_n    \Big\}  \\
	&\le& \log \left( \sum_{j=0}^{s_n/2} \Big[  \frac{p \{p^2(p+\sqrt{n}) + \|\theta_0\|_1\} }{\epsilon} 72 M \sqrt{n^{1/a_2} \log p}    \Big]^j \right) \\
	&& \,\,+ K \Big\{ H_n \log \Big(\frac{a_n}{\sigma_{0n} \epsilon } \Big) + H_n \log \Big( \frac{1}{\epsilon}\Big) + \log M_n    \Big\}  \\  
	&\le& s_n \log \Big( \frac{p^4}{\epsilon}\Big)  + K \Big\{ \frac{C_2 s_n \log p}{\log n} \log \Big(\frac{n^{1/a_1 + 1/(2a_2)}}{\epsilon} \Big) + \frac{C_2 s_n \log p}{ \log n} \log \Big(\frac{1}{\epsilon} \Big)   \\
	&& \quad\quad+\log n\Big\} \\
	&=:& \log N(\epsilon) 
	\eea
	for some universal constant $K>0$ by Proposition 2 of \cite{shen2013adaptive}. 
	Note that in the last term, we do not have the term $M_n \epsilon_n^2$ while Proposition 2 in \cite{shen2013adaptive} includes this term, because they considered $d$-dimensional densities. 
	It is easy to see that from their proof, the term $M_n \epsilon_n^2$ can be omitted if we focus on univariate $(d=1)$ densities. 
	Note that 
	\bea
	\log N(\epsilon_n) 
	&\le& 5 s_n \log p + K C_2 \big\{ 2+ a_1^{-1} + (2a_2)^{-1} \big\} s_n \log p \\
	&=& \big[ 5 + K C_2 \big\{ 2 + a_1^{-1} + (2a_2)^{-1}  \big\}  \big] s_n \log p
	\eea
	Thus, by \eqref{Lem2and9_GS}, there exist test functions $\phi_n$ such that
	\bea
	\bbP_{\theta_0,\eta_0}\phi_n  &\lesssim& \exp \left( -\frac{C_1}{2}s_n \log p +  \big[5 + K C_2 \big\{ 2 + a_1^{-1} + (2a_2)^{-1}  \big\} \big] s_n \log p \right)
	\eea
	and 
	\bea
	\sup_{\substack{(\theta,\eta)\in \Theta_n\times \calH_n \\ d_n((\theta,\eta),(\theta_0,\eta_0 )) > \epsilon_n } } \bbP_{\theta,\eta} (1-\phi_n) &\lesssim& \exp \left( - \frac{C_1}{2} s_n \log p  \right).
	\eea
	Therefore, by Lemma \ref{lemma:dem_lb}, for a large constant $C_1 >0$ such that $C_1 > 10 + 2 KC_2 \big\{ 2 + a_1^{-1} + (2a_2)^{-1}  \big\}$ and $C_1 > C_{\rm lower}(A_3+2+C_\lambda)/K_{\rm dim}$,
	\bea
	&&  \bbE_{\theta_0,\eta_0} \Pi \left( d_n( (\theta,\eta), (\theta_0,\eta_0) ) > \epsilon_n  \,\,\bigg|\,\, D_n\right) \\
	&\le&  \Eaa \Pi \left((\theta,\eta)\in \Theta_n\times \calH_n: d_n( (\theta,\eta), (\theta_0,\eta_0) ) > \epsilon_n  \,\,\bigg|\,\, D_n \right) + o(1) \\
	&\le&  \Eaa \left[  \Pi \left((\theta,\eta)\in \Theta_n\times \calH_n: d_n( (\theta,\eta), (\theta_0,\eta_0) ) > \epsilon_n  \,\,\bigg|\,\, D_n \right) (1-\phi_n) \right]\\
	&& + o(1) \\
	&\lesssim& \sup_{\substack{(\theta,\eta)\in \Theta_n\times \calH_n \\ d_n((\theta,\eta),(\theta_0,\eta_0 )) > \epsilon_n } } \bbP_{\theta,\eta} (1-\phi_n) \cdot \exp\Big\{ \frac{C_{\rm lower}(A_3+2+C_\lambda)}{2K_{\rm dim}} s_n \log p \Big\} + o(1) \\
	&=& o(1) . 
	\eea
	It completes the proof by taking $K_{\rm Hel} = \sqrt{C_1} > \sqrt{C_{\rm lower}(A_3+2+C_\lambda)/K_{\rm dim}} \vee \sqrt{10 + 2 KC_2 \big\{ 2 + a_1^{-1} + (2a_2)^{-1}  \big\}}$.  \hfill $\blacksquare$
\end{proof}

\begin{proof}[Proof of Corollary \ref{cor:convrate_H}]
	Let $(T_z(\eta))(x) = \eta(x+ z)$.
	Note that for any $\eta_0$ satisfying  \hyperref[D1]{(D1)}-\hyperref[D4]{(D4)} and $\eta\in \calH_{\rm mix}$,
	\bea
	\inf_{z\in \bbR} d_H ( \eta , T_z(\eta_0)) 
	&\le& d_H(\eta, T_{x^T(\theta-\theta_0)}(\eta_0) )\\
	&=& \left[ \int \big(\sqrt{\eta(y)} - \sqrt{\eta_0(y+x^T(\theta-\theta_0))} \,\, \big)^2 dy \right]^{1/2} \\
	&=& \left[ \int \big(\sqrt{\eta(y - x^T\theta)} - \sqrt{\eta_0(y- x^T\theta_0)} \,\, \big)^2 dy \right]^{1/2} \\
	&=& d_H (\eta(\cdot-x^T\theta), \eta_0(\cdot-x^T\theta_0) ),
	\eea
	thus
	\bea
	\inf_{z \in \bbR} d_H(\eta, T_z(\eta_0)) 
	&\le& \left[\frac{1}{n}\sum_{i=1}^n d_H^2( \eta(\cdot-x_i^T\theta), \eta_0(\cdot-x_i^T\theta_0) )  \right]^{1/2} \\
	&=& d_n ((\theta,\eta), (\theta_0,\eta_0)  ).
	\eea
	For any $z\in \bbR$,
	\bea
	d_H^2(\eta_0, T_z(\eta_0)) &=&
	\int (\sqrt{\eta_0(y+z)} - \sqrt{\eta_0(y)} )^2 dy \\
	&\le& z^2 \int \left( \int_0^1 \frac{\dot{\eta_0}(y+tz)}{\sqrt{\eta_0(y+tz)}} dt  \right)^2 dy \\
	&\le& z^2 \int \int_0^1 \left( \frac{\dot{\eta_0}(y+tz)}{\eta_0(y+tz)} \right)^2 \eta_0(y+tz) dt dy \\
	&=& z^2 \int_0^1 \int  \left( \frac{\dot{\eta_0}(y+tz)}{\eta_0(y+tz)} \right)^2 \eta_0(y+tz) dy dt \\
	&=& z^2\bbE_{\eta_0}\Big( \frac{\dot{\eta}_0 }{\eta_0} \Big)^2  \\
	&\le& z^2\bbE_{\eta_0}\Big( \frac{|\dot{\eta}_0| }{\eta_0} \Big)^{2\beta+\nu}  \,\, \le\,\, z^2 C_{2\beta+\nu}  
	\eea
	for some constant $C_{2\beta+\nu}  >0$ depending only on $(\beta,\nu)$ because of  condition \hyperref[D3]{(D3)} on $\eta_0$ and $2\beta + \upsilon \ge 2$. 
	
	If $|z| \le d_H(\eta,\eta_0)/(2\sqrt{C_{2\beta+\nu}})$, then
	\bea
	d_H(\eta, T_z(\eta_0)) 
	&\ge& d_H(\eta,\eta_0) - d_H(\eta_0, T_z(\eta_0)) \\
	&\ge& d_H(\eta,\eta_0) - \sqrt{C_{2\beta+\nu}}|z| \\
	&\ge& \frac{1}{2}d_H(\eta,\eta_0),
	\eea
	and otherwise, if $|z| > d_H(\eta,\eta_0)/(2\sqrt{C_{2\beta+\nu}})$
	\bean
	d_H(\eta, T_z(\eta_0)) &\ge& \frac{1}{2} d_V(\eta, T_z(\eta_0)) \nonumber\\
	&=&  \sup_B |\eta(B) - T_z(\eta_0)(B)| \nonumber\\
	&\ge&  \bigg| \int_0^\infty \eta(y)dy - \int_0^\infty \eta_0(y+z) dy  \bigg|  \nonumber\\
	&=& \bigg| \int_0^\infty \eta(y)dy - \int_{-z}^\infty \eta_0(y+z) dy  - \int_0^{-z} \eta_0(y+z) dy \bigg|  \nonumber \\ 
	&=&\int_0^{|z|} \eta_0(y) dy \label{symm_ineq}  \\
	&\ge& \big\{ \int_0^1 \eta_0(y) dy  \big\}  \wedge  \big\{  (2 \sqrt{C_{2\beta+\nu}} )^{-1} d_H(\eta,\eta_0) \inf_{0\le y \le 1} \eta_0(y) \big\} , \nonumber
	\eean
	where \eqref{symm_ineq} holds due to the symmetric assumption \hyperref[D4]{(D4)} and $\eta\in\calH_{\rm mix}$.

	Thus, we have
	\bea
	K_{\rm Hel} \sqrt{\frac{s_n \log p}{n}} &\ge& d_n\big( (\theta,\eta) , (\theta_0,\eta_0)  \big)  \\
	&\ge& \inf_{z \in \bbR} d_H(\eta, T_z(\eta_0))   \\
	&\ge& \Big[ \frac{1}{2} \wedge \Big\{  \frac{1}{2\sqrt{C_{2\beta+\nu}}} \inf_{0\le y \le 1}\eta_0(y) \Big\}  \Big]  d_H(\eta,\eta_0)
	\eea
	because $s_n\log p =o(n)$, which completes the proof by taking $K_{\rm eta} = K_{\rm Hel} \Big[ \frac{1}{2} \wedge \Big\{  \frac{1}{2\sqrt{C_{2\beta+\nu}}} \inf_{0\le y \le 1}\eta_0(y) \Big\}  \Big]^{-1}$.	 \hfill $\blacksquare$
\end{proof}

\section{Proofs for Bernstein von-Mises Theorem}

We first present three lemmas (Lemma \ref{lemma:post_conc_Hn}, Lemma \ref{lemma:normal} and Lemma \ref{thm:misLAN}), which directly appear in the proof of Theorem \ref{thm:BvM}.
Other auxiliary results used to prove these lemmas will be provided in Section \ref{sec:auxiliary}.


\begin{lemma}\label{lemma:post_conc_Hn}
	Assume that the prior conditions \eqref{prior_p}, \eqref{prior_gS} and \eqref{al_tail}-\eqref{G4} hold.
	Let
	\bea
	\begin{split}
		\calH_n' &= \Bigg\{ \eta(\cdot) = \int \phi_\sigma(\cdot -z) d\widebar{F}(z) \text{ with } F = \sum_{h=1}^\infty \pi_h \delta_{z_h} : \\ 
		& \hspace{-.3cm}  z_h \in [-a_n,a_n], h\le H_n ;  \sum_{h> H_n}\pi_h < \epsilon_n ; \sigma^2 \in [\sigma_{0n}^2,\, \log n \wedge \{\sigma_{0n}^2(1+\epsilon_n^2 )^{M_n}\}  )  \Bigg\},
	\end{split}
	\eea
	where $a_n = (\log n)^{\frac{2}{\tau}}$, $\epsilon_n^2 = C_1 s_n\log p/n$, $H_n =\lfloor C_2 s_n\log p/\log n \rfloor$, $\sigma_{0n}^{-2a_2} = s_n \log p$, $M_n=n$ for some positive constants $C_1$ and $C_2$,
	and define
	\bean\label{H_n_star}
	{\calH}_n^* &:=& \left\{ \eta \in \calH_n' : d_H(\eta,\eta_0) \le K_{\rm eta} \sqrt{s_n \log p/n } \, \, \right\}.
	\eean	
	Then,
	\bea
	\Eaa \Pi \left( \eta \in \left({\calH^*_n}\right)^c \mid D_n \right) &=& o(1)
	\eea
	for any $\eta_0$ satisfying \hyperref[D1]{(D1)}-\hyperref[D5]{(D5)}.
\end{lemma}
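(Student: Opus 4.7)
The plan is a two-step reduction that reuses the sieve argument from the proof of Theorem \ref{thm:convrate_mH} together with the Hellinger rate from Corollary \ref{cor:convrate_H}. Decompose
\[
(\calH_n^*)^c \;=\; (\calH_n')^c \;\cup\; \Big\{\eta \in \calH_n' : d_H(\eta,\eta_0) > K_{\rm eta}\sqrt{s_n\log p/n}\Big\}.
\]
Corollary \ref{cor:convrate_H} directly controls the second piece in $\Eaa$-expectation, so the whole task reduces to proving $\Eaa \Pi(\eta\in(\calH_n')^c\mid D_n)=o(1)$.

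For this I would mimic the step leading to \eqref{etaconc} in the proof of Theorem \ref{thm:convrate_mH}. The strengthened prior conditions \eqref{al1} and \eqref{G4} make most of the truncations defining $\calH_n'$ automatic under $\Pi_\calH$: every atom $z_h$ drawn from $\widebar{\alpha}$ already lies in $[-C'(\log n)^{2/\tau}, C'(\log n)^{2/\tau}]$, and every draw $\sigma^2$ from $G$ already satisfies $\sigma^2 \le C'\log n$. Hence, up to matching the constant $C'$ against the implicit constant in the definition of $a_n$, the set $(\calH_n')^c$ reduces under the prior to $\{\sum_{h>H_n}\pi_h \ge \epsilon_n\} \cup \{\sigma^2 < \sigma_{0n}^2\}$. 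Proposition~2 of \cite{shen2013adaptive} controls the stick-breaking tail event by a quantity of order $\exp(-C_2 s_n\log p)$ once $C_2$ is large, while \eqref{G1} bounds the variance event by $\exp(-C''\sigma_{0n}^{-2a_2}) = \exp(-C'' s_n\log p)$. Together these give $\Pi_\calH((\calH_n')^c) \le \exp(-C''' s_n\log p)$ with $C'''$ as large as desired. I would then convert the prior bound to a posterior bound via the evidence lower bound of Lemma \ref{lemma:dem_lb}, which still applies because \eqref{al1} is strictly stronger than \eqref{al_n_n}, and the Kullback--Leibler neighborhood constructed in Lemma \ref{lemma:modif_of_shen2013} lives on the scales $a_0\{\log(1/\tilde\epsilon_n)\}^{1/\tau}$ and $\tilde{\sigma}_{0n}^{-2}(1+\tilde{\sigma}_{0n}^{2\beta})$, both comfortably inside the tighter supports. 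Dividing through absorbs only an $\exp(C s_n\log p)$ factor and leaves an exponentially small remainder once $C_1, C_2$ are chosen large relative to $C_{\rm lower}(A_3+2+C_\lambda)/K_{\rm dim}$.

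The main obstacle, such as it is, is purely bookkeeping: checking that the scales used to build the Kullback--Leibler neighborhood in Lemma \ref{lemma:dem_lb} genuinely fit inside the tighter supports imposed by \eqref{al1} and \eqref{G4}, and aligning the constant $C'$ from the prior support with the constant in $a_n$ so that the location and upper-variance truncations defining $\calH_n'$ are indeed automatic under $\Pi_\calH$. Once these constants are arranged, the argument is essentially a re-run of the sieve portion of the proof of Theorem \ref{thm:convrate_mH}, closed off by Corollary \ref{cor:convrate_H} for the Hellinger shrinkage.
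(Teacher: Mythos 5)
Your proposal is correct and mirrors the paper's own argument: decompose $(\calH_n^*)^c$ into the sieve complement and the Hellinger ball complement, handle the latter via Corollary~\ref{cor:convrate_H}, and handle the former by combining a prior mass bound $\Pi_\calH((\calH_n')^c)\le \exp(-C'''s_n\log p)$ (from Proposition~2 of \cite{shen2013adaptive} plus \eqref{G1}) with the evidence lower bound of Lemma~\ref{lemma:dem_lb}. Your added observations — that the tighter supports \eqref{al1}--\eqref{G4} render the location and upper-variance truncations of $\calH_n'$ essentially automatic, and that the KL neighborhood of Lemma~\ref{lemma:modif_of_shen2013} lives on scales $O((\log n)^{1/\tau})$ and $\tilde\sigma_{0n}^2\to 0$ comfortably inside those supports — are exactly the bookkeeping the paper summarizes as a ``slight modification'' of \cite{shen2013adaptive}.
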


\begin{proof} 
	We have
	\bean
	&&\Eaa \Pi \left( \eta \in \left({\calH^*_n}\right)^c \mid D_n \right) \nonumber \\
	&\le& \Eaa \Pi \left( \eta \in \left({\calH'_n}\right)^c \mid D_n \right) \nonumber \\
	&+& \Eaa \Pi \left( d_H(\eta,\eta_0) > K_{\rm eta} \sqrt{\frac{s_n\log p}{n} }   \,\Big|\, D_n \right)  . \quad \quad \,\,\label{eta_H_restricted}
	\eean
	Note that Lemma \ref{lemma:dem_lb} still holds for the prior $\Pi_\calH$ with the support conditions \eqref{al1} and \eqref{G4} because the proof of Theorem 4 of \cite{shen2013adaptive} can be easily modified for the priors with the restricted support with \eqref{al1} and \eqref{G4}.
	Thus,
	\bea
	&& \Eaa \Pi \left( \eta \in \left({\calH'_n}\right)^c \mid Dn \right) \nonumber \\
	&\le& \Eaa \left[ \Pi \left( \eta \in \left({\calH'_n}\right)^c \mid D_n \right) I_{E_n}\right] + o(1) \nonumber \\
	&\le& \Pi_{\calH} \big(\left({\calH'_n}\right)^c \big) \exp \Big\{ \frac{C_{\rm lower}(A_3+2+C_\lambda)}{2K_{\rm dim} } \tilde{s}_n \log p \Big\} + o(1) ,
	\eea
	where $E_n$ is the event \eqref{En_event}, $\tilde{s}_n = 2 K_{\rm dim} \{s_0 \vee  n^{\frac{\kappa^*}{2\beta+\kappa^*}}  (\log n)^{2t_0-1}  \}$ and $t_0 = \{\kappa^*(1+ \tau^{-1}+\beta^{-1}) + 1 \}/(2+ \kappa^*\beta^{-1})$.
	With a slight modification of the proof of Proposition 2 in \cite{shen2013adaptive},   
	\bea
	\Pi_{\calH}\big(\left({\calH'_n}\right)^c \big) &\lesssim& H_n \exp \Big\{ - C'' a_n^{a_1} \Big\}  + \Big\{ \frac{e \alpha(\bbR)}{H_n} \log \Big(\frac{1}{\epsilon_n} \Big)  \Big\}^{H_n}  \\
	&& +\,\, \exp (-C'' \sigma_{0n}^{-2a_2} )  + \sigma_{0n}^{-2a_3} ( 1+ \epsilon_n^2)^{-2M_n a_3}  \\
	&\le& \exp \Big\{  -\frac{1}{2}(C_1 a_3 \wedge C_2 \wedge C'') s_n \log p  \Big\}   .
	\eea
	Thus, 
	\bea
	&& \Eaa \Pi \left( \eta \in \left({\calH'_n}\right)^c \mid Dn \right)  \\
	&\lesssim& \exp \Big\{  -\frac{1}{2}(C_1 a_3 \wedge C_2 \wedge C'') s_n \log p  +\frac{C_{\rm lower}(A_3+2+C_\lambda)}{2K_{\rm dim} } \tilde{s}_n \log p  \Big\} + o(1)  \\
	&=& o(1)
	\eea
	for some large constant $K_{\rm dim}>1$.
	Furthermore, it is easy to see that Corollary \ref{cor:convrate_H} also holds for for the prior $\Pi_\calH$ with \eqref{al1} and \eqref{G4}, which implies that \eqref{eta_H_restricted} is of order $o(1)$.	 \hfill $\blacksquare$
\end{proof}

\begin{lemma}\label{lemma:normal}
	Suppose that $(s_n \log p )^{1+ \frac{ 8}{a_2}} = o(n^{1-\zeta})$ holds for some constant $\zeta>0$. Further assume that $\psi(s_n)$ is bounded away from zero. Let $A_S := \{ h \in \bbR^{|S|} : \|h\|_1 > M_n s_n \sqrt{\log p}  \}$  for some sequence $M_n$ such that $\sqrt{\log p} = o(M_n)$. 
	Then
	\bean\label{normal_conc}
	\sup_{S \in \calS_n} \sup_{\eta \in \calH_n^*} \frac{\int_{A_S} \exp \left(h^T G_{n,\eta,S} - \frac{1}{2}h^T V_{n,\eta,S} h \right) dh }{\int_{\bbR^{|S|}} \exp \left(h^T G_{n,\eta,S} - \frac{1}{2}h^T V_{n,\eta,S} h \right) dh} &=& o_{P_0}(1),
	\eean
	where $\calH_n^*$ defined at \eqref{H_n_star} and 
	\bea
	\calS_n &:=& \left\{ S: |S| \le \frac{s_n}{2}, \, \|\theta_{0, S^c}\|_2 \le \frac{K_{\rm theta}}{\psi(s_n)}\sqrt{\frac{s_n \log p}{n}}  \right\} .
	\eea
\end{lemma}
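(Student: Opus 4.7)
The plan is to recognize the integrand $\exp(h^\top G_{n,\eta,S} - \tfrac12 h^\top V_{n,\eta,S} h)$ as the unnormalized density of $N(\mu_{\eta,S}, V_{n,\eta,S}^{-1})$ with $\mu_{\eta,S} := V_{n,\eta,S}^{-1}G_{n,\eta,S}$, so that the ratio in \eqref{normal_conc} coincides with the conditional Gaussian tail probability $\bbP(\|Z_{\eta,S}\|_1 > M_n s_n\sqrt{\log p} \mid D_n)$ where $Z_{\eta,S}\sim N(\mu_{\eta,S}, V_{n,\eta,S}^{-1})$. I would then split $\|Z_{\eta,S}\|_1 \le \|\mu_{\eta,S}\|_1 + \|Z_{\eta,S}-\mu_{\eta,S}\|_1$ and treat the two pieces separately: the data-dependent mean by concentration under $\bbP_{\theta_0,\eta_0}$, and the centered Gaussian piece by an $\ell_1$-moment/Markov bound.

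For the fluctuation piece, $V_{n,\eta,S}^{-1} = \nu_\eta^{-1}\sg_S^{-1}$. I would first argue that $\|V_{n,\eta,S}^{-1}\|_{\mathrm{op}} \lesssim 1$ uniformly on $\calS_n\times\calH_n^*$: the hypothesis that $\psi(s_n)$ is bounded away from zero handles $\sg_S^{-1}$, while $\nu_\eta$ stays bounded below uniformly over $\calH_n^*$ by the Hellinger-to-score transfer underlying Lemma 6 of the Supplement (which, combined with (D5), gives $L^2(\eta_0)$-convergence $\dot\ell_\eta\to\dot\ell_{\eta_0}$ and hence $\nu_\eta\to\nu_{\eta_0}>0$). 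Then $\bbE\|Z_{\eta,S}-\mu_{\eta,S}\|_2^2 = \mathrm{tr}(V_{n,\eta,S}^{-1}) \lesssim |S|$, so by Cauchy--Schwarz $\bbE\|Z_{\eta,S}-\mu_{\eta,S}\|_1^2 \lesssim |S|^2 \le s_n^2$, and Markov's inequality produces the uniform fluctuation tail bound $(M_n^2\log p)^{-1}=o(1)$.

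For the mean piece, the aim is $\sup_{\eta,S}\|\mu_{\eta,S}\|_1 = o_{P_0}(M_n s_n\sqrt{\log p})$. Cauchy--Schwarz and the operator-norm bound yield $\|\mu_{\eta,S}\|_1 \lesssim |S|\max_{j\in S}|G_{n,\eta,j}|$, so it suffices to show $\sup_{\eta\in\calH_n^*}\max_{1\le j\le p}|G_{n,\eta,j}| = o_{P_0}(M_n\sqrt{\log p})$. Using symmetry of $\eta$ together with (D4), one has $\bbE_{\eta_0}\dot\ell_\eta(\epsilon)=0$, so $G_{n,\eta,j}=n^{-1/2}\sum_i x_{ij}\dot\ell_\eta(\epsilon_i)$ is a centered sum of independent summands. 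With $|x_{ij}|\le M\sqrt{\log p}$ and the uniform moment bound $\bbE_{\eta_0}[\dot\ell_\eta^q(\epsilon)]\lesssim 1$ for moderate $q$ over $\calH_n^*$ (which follows from (D5) and the restriction $\sigma\ge\sigma_{0n}$ in the definition of $\calH_n'$), a Bernstein-type bound after truncation gives pointwise tails of the form $\bbP(|G_{n,\eta,j}| > t)\le 2\exp(-ct^2/\log p)$ in the target range of $t$. A union bound over $j\in\{1,\dots,p\}$ together with a covering of $\calH_n^*$ (whose entropy I would control as in the proof of Theorem \ref{thm:convrate_mH}) then gives $\sup_\eta\max_j|G_{n,\eta,j}|=O_{P_0}(\log p)$, which is $o(M_n\sqrt{\log p})$ since $\sqrt{\log p}=o(M_n)$.

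The hard part will be the uniform-in-$\eta$ control, since the summands of $G_{n,\eta,j}$ depend nonlinearly on $\eta$ through $\dot\ell_\eta$. The ingredients I expect to need are (i) a score-transfer lemma showing that Hellinger closeness forces $L^2(\eta_0)$-closeness of $\dot\ell_\eta$ to $\dot\ell_{\eta_0}$ together with uniform moment bounds on $\dot\ell_\eta$, and (ii) bracketing/covering estimates for $\{\dot\ell_\eta : \eta\in\calH_n^*\}$ fine enough that the discretization error is absorbed. Both of these parallel the techniques used for Lemmas 6 and 12 in the Supplement and should combine cleanly with the scaling $(s_n\log p)^{1+8/a_2}=o(n^{1-\zeta})$ to close the argument.
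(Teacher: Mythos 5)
Your proposal is correct, and it reaches the conclusion by a genuinely different and somewhat cleaner route than the paper. The paper does not interpret the ratio as a conditional Gaussian tail probability. Instead, it (i) invokes Lemma~\ref{lemma:Gnelldot} to get $\Eaa\bigl(\sup_{S,\eta}\|G_{n,\eta,S}\|_\infty\bigr)\lesssim\log p$, (ii) deduces that on $A_S$ the linear term $h^T G_{n,\eta,S}$ is dominated by a fixed fraction of the quadratic term $h^T V_{n,\eta,S} h$ (using $\|h\|_1$ large and $\psi(s_n)\gtrsim 1$, $\nu_\eta\gtrsim 1$), so the numerator integral is bounded by $\int_{A_S}\exp(-C\,h^TV_{n,\eta,S}h)\,dh$, (iii) bounds that Gaussian tail integral explicitly, (iv) computes the denominator in closed form as $(2\pi)^{|S|/2}|V_{n,\eta,S}|^{-1/2}\exp\bigl(\tfrac{1}{2\nu_\eta}\|H_S\dot L_{n,\eta}\|_2^2\bigr)$, and (v) shows the log of the ratio tends to $-\infty$. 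Your probabilistic decomposition $\|Z\|_1\le\|\mu_{\eta,S}\|_1+\|Z-\mu_{\eta,S}\|_1$ with $\mathrm{tr}(V_{n,\eta,S}^{-1})\lesssim|S|$ and Markov on $\bbE\|Z-\mu\|_1^2$ avoids the explicit Gaussian integral computation and is easier to follow, at the cost of yielding only the polynomial rate $(M_n^2\log p)^{-1}$ rather than the paper's exponentially small bound; for the stated $o_{P_0}(1)$ conclusion this is immaterial. The one place where you re-derive rather than cite is the control of $\sup_{\eta\in\calH_n^*}\max_j|G_{n,\eta,j}|$: the paper isolates exactly this in Lemma~\ref{lemma:Gnelldot} (proved by bracketing entropy via Corollary~2.7.4 of van der Vaart--Wellner together with the score transfer in Lemma~\ref{lemma:score}), and that is what the hypothesis $(s_n\log p)^{1+8/a_2}=o(n^{1-\zeta})$ is actually there to support; citing it directly would let you drop the somewhat hand-wavy Bernstein-plus-truncation-plus-covering sketch, which as written glosses over the fact that $\dot\ell_\eta$ is unbounded and scales with $\sigma_{0n}^{-2}=(s_n\log p)^{1/a_2}$, so the ``uniform moment bound $\bbE_{\eta_0}[\dot\ell_\eta^q]\lesssim 1$'' needs the $L^2$-score-transfer argument (Lemma~\ref{lemma:score}) rather than being immediate from (D5) alone.
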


\begin{proof}
	Note that
	\bea
	\Eaa \left( \sup_{S\in \calS_n} \sup_{\eta \in \calH_n^*} \| G_{n,\eta,S} \|_\infty  \right)
	&\lesssim& {\log p}
	\eea 
	by Lemma \ref{lemma:Gnelldot} and $| h^T G_{n,\eta, S} | \le \|h\|_1 \cdot \|G_{n,\eta,S}\|_\infty$.
	Also note that
	\bea
	h^T V_{n,\eta,S} h &=& \nu_\eta \cdot h^T \sg_S h \\
	&=& \frac{\nu_\eta}{n} \cdot\|X_S h \|_2^2 \\
	&\ge& \nu_\eta \cdot \phi^2( s_n) \|h\|_1^2 \cdot\frac{1}{s_n} \,\,\ge\,\,  \nu_\eta \cdot \psi^2( s_n) \|h\|_1^2 \cdot\frac{1}{s_n}.
	\eea
	Thus, we have
	\bea
	\sup_{S\in \calS_n} \sup_{h\in A_S} \sup_{\eta\in \calH_n^*} \frac{|h^T G_{n,\eta,S}|}{h^T V_{n,\eta,S} h} 
	&\lesssim& \sup_{S\in \calS_n} \sup_{h\in A_S} \sup_{\eta\in \calH_n^*} \frac{\|h\|_1 \cdot \|G_{n,\eta,S}\|_\infty \cdot s_n}{\nu_\eta \psi^2(s_n) \cdot \|h\|_1^2} \\
	&\le& o_{P_0}(1),
	\eea
	because $\sqrt{\log p} = o(M_n)$ and  $\nu_{\eta_0} \gtrsim 1$ holds by Lemma \ref{lemma:score} and assumptions on $\eta_0$.
	It implies that
	\bea
	&& \sup_{S\in \calS_n } \sup_{\eta\in \calH_n^*} \int_{A_S} \exp \left( h^T G_{n,\eta,S} - \frac{1}{2} h^T V_{n,\eta,S} h \right) dh \\
	&\le& \sup_{S\in \calS_n } \sup_{\eta\in \calH_n^*} \int_{A_S} \exp \left( - C h^T V_{n,\eta,S} h \right) dh \\
	&\le& \int_{A_S} \exp \left( - \tilde{C}\|h\|_2^2 \right) dh \\
	&\le& \left( \sqrt{\pi} M_n^2 s_n \log p \right)^{\frac{s_n}{2}} \exp \left( -\frac{1}{3} \tilde{C}' M_n^2 s_n \log p \right)
	\eea
	for some positive constants $C,\tilde{C}$ and $\tilde{C}'$, and all sufficiently large $n$ with $\bbP_{\theta_0,\eta_0}$-probability tending to 1. 
	It is easy to show that
	\bea 
	&&\int \exp \left( h^T G_{n,\eta,S} - \frac{1}{2}h^T V_{n,\eta,S} h \right) dh\\
	&=& (2\pi )^{\frac{|S|}{2}} | V_{n,\eta,S} |^{-\frac{1}{2}} \exp \left( \frac{1}{2\nu_\eta} \|H_S \dot{L}_{n,\eta} \|_2^2 \right),
	\eea
	where $H_S = X_S (X_S^T X_S)^{-1} X_S^T$ and $\dot{L}_{n,\eta} = \left( \dot{\ell}_\eta(y_i-x_i^T \theta_0)  \right)_{i=1}^n \in \bbR^n$. 
	Therefore, the log of the left hand side of \eqref{normal_conc} is bounded above by
	\bea
	&& \frac{s_n}{2} \log \left( \sqrt{\pi}M_n^2 s_n \log p \right) - \frac{1}{3} \tilde{C}' M_n^2 s_n \log p -\frac{|S|}{2} \log (2\pi) + \frac{1}{2} \log |V_{n,\eta,S}|\\
	&& - \frac{1}{2\nu_\eta} \| H_S \dot{L}_{n,\eta}\|_2^2  \\
	&\le& \frac{s_n}{2} \log \left( \sqrt{\pi}M_n^2 s_n \log p \right) - \frac{1}{3} \tilde{C}' M_n^2 s_n \log p + \frac{s_n}{4} \log \left(M_n^2 \nu_\eta \right)
	\eea
	with $\bbP_{\theta_0,\eta_0}$-probability tending to 1. The last term tends to $-\infty$ as $n\to\infty$, thus we get the desired result. \hfill $\blacksquare$
\end{proof}

\bigskip
Define 
\bean
\Theta_n^* &:=& \bigg\{\theta\in \Theta: S_\theta \in \calS_n , \, \|\theta-\theta_0\|_1 \le K_{\rm theta}\frac{s_n}{\phi(s_n)}\sqrt{\frac{\log p}{n}}, \label{theta_n_star} \\
&&  \|\theta-\theta_0\|_2 \le K_{\rm theta}\frac{1}{\psi(s_n)}\sqrt{\frac{s_n\log p}{n}},  \, \|X(\theta-\theta_0)\|_2 \le K_{\rm theta} \sqrt{s_n \log p}\,   \bigg\} \nonumber,
\eean
and let $M_n\Theta_n^*$ be the variant of $\Theta_n^*$ with $M_n K_{\rm theta}$ instead of $K_{\rm theta}$.

\begin{lemma}[Misspecified LAN: version 2]\label{thm:misLAN}
	Suppose that $s_n^6(\log p)^{11}=o(n^{1-\zeta})$, $(s_n\log p )^{1+ \frac{15}{a_2}} = o(n^{1-\zeta})$ and 
	$\left( s_n\log p \right)^{6 + \frac{5}{4a_2}}(\log p)^{\frac{5}{2}} = o(n^{1-\zeta})$ hold for some constant $\zeta>0$. 
	Further assume that $\psi(s_n)$ is bounded away from zero. Define $\Theta_n^*$ and $\calH_n^*$ as \eqref{theta_n_star} and \eqref{H_n_star}, respectively, and let
	\bea
	&& r_n(\theta,\eta) \\
	&:=& L_n(\theta,\eta) - L_n(\theta_0,\eta) - \sqrt{n}(\theta-\theta_0)^T \bbG_n \dot{\ell}_{\theta_0,\eta_0} + \frac{n}{2} (\theta-\theta_0)^T V_{n,\eta_0} (\theta-\theta_0).
	\eea
	Then, we have 
	\bea
	\Eaa \left( \sup_{\theta\in M_n\Theta_n^*} \sup_{\eta\in\calH_n^*} |r_n(\theta,\eta)|  \right) &=& o(1)
	\eea
	for any $\eta_0$ satisfying  \hyperref[D1]{(D1)}-\hyperref[D5]{(D5)} and some sequence $M_n$ such that $\sqrt{\log p}=o(M_n)$.
\end{lemma}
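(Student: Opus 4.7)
The plan is to apply a third-order Taylor expansion to $L_n(\theta,\eta) - L_n(\theta_0,\eta)$ in $\Delta := \theta - \theta_0$ and control the resulting remainders uniformly on $M_n\Theta_n^* \times \calH_n^*$. Two integration-by-parts identities convert the Taylor coefficients into the target form: under \hyperref[D4]{(D4)}, $\dot\ell_\eta$ is antisymmetric and $\eta_0$ is symmetric, so $\bbE_{\eta_0}\dot\ell_\eta = 0$ and the linear Taylor term equals $-\sqrt n\,\Delta^T\bbG_n\dot\ell_{\theta_0,\eta}$ with no mean correction; moreover, the tail decay \hyperref[D2]{(D2)} together with the smoothness of $\eta\in\calH_n^*\subset\calH_{\rm mix}$ makes boundary terms vanish in $\int\ddot\ell_\eta\,\eta_0 = -\int\dot\ell_\eta\dot\ell_{\eta_0}\eta_0 = -\nu_\eta$, so the quadratic Taylor term has mean $-\tfrac{n}{2}\Delta^T V_{n,\eta}\Delta$. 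Substituting into the definition of $r_n$ and regrouping produces
\begin{align*}
r_n(\theta,\eta) &= \underbrace{\sqrt n\,\Delta^T\bbG_n\bigl[\dot\ell_{\theta_0,\eta_0}-\dot\ell_{\theta_0,\eta}\bigr]}_{A_n} + \underbrace{\tfrac{n}{2}(\nu_{\eta_0}-\nu_\eta)\Delta^T\sg\Delta}_{B_n^{(1)}} \\
&\quad + \underbrace{\tfrac{1}{2}\sum_{i=1}^n \bigl[\ddot\ell_\eta(\epsilon_i)+\nu_\eta\bigr](x_i^T\Delta)^2}_{B_n^{(2)}} - \underbrace{\tfrac{1}{6}\sum_{i=1}^n \dddot\ell_\eta(\xi_i)(x_i^T\Delta)^3}_{C_n},
\end{align*}
where $\xi_i$ lies between $\epsilon_i$ and $\epsilon_i - x_i^T\Delta$, and it suffices to bound $\Eaa\sup_{M_n\Theta_n^*\times\calH_n^*}|\cdot|$ for each of the four pieces.

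Three of the pieces are reasonably routine. For $C_n$, the tail condition \hyperref[D2]{(D2)} and the transfer bound \eqref{eta0_4} extend the polynomial-growth estimate $|\dddot\ell_\eta(\xi_i)|\lesssim|\epsilon_i|^{\gamma_3}+|x_i^T\Delta|^{\gamma_3}+1$ from $\eta_0$ to nearby $\eta\in\calH_n^*$; combining this with the bound $|x_i^T\Delta|\le M\sqrt{\log p}\,\|\Delta\|_1\lesssim M_n s_n\log p/\sqrt n$ on $M_n\Theta_n^*$ (using that $\psi(s_n)\le\phi(s_n)$ is bounded away from zero) and moment bounds for $|\epsilon_i|^{\gamma_3}$ yields $\Eaa\sup|C_n|=o(1)$ precisely under the hypothesis $(s_n\log p)^{6+5/(4a_2)}(\log p)^{5/2}=o(n^{1-\zeta})$. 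For $B_n^{(1)}$, a Lipschitz estimate on $\eta\mapsto\nu_\eta$ using \hyperref[D5]{(D5)} gives $|\nu_{\eta_0}-\nu_\eta|\lesssim d_H(\eta,\eta_0)\lesssim\sqrt{s_n\log p/n}$ on $\calH_n^*$, while $\Delta^T\sg\Delta = n^{-1}\|X\Delta\|_2^2$ is $O(M_n^2 s_n\log p/n)$, so $B_n^{(1)}=o(1)$. For $B_n^{(2)}$, the quadratic form $\tfrac12\Delta^TX^TD_\eta X\Delta$ with $D_\eta = \mathrm{diag}(\ddot\ell_\eta(\epsilon_i)+\nu_\eta)$ has mean zero under $\bbP_{\theta_0,\eta_0}$, and its variance is dominated by $\sum_i(x_i^T\Delta)^4\cdot\V_{\eta_0}(\ddot\ell_\eta(\epsilon))$; the moment factor is uniformly bounded via \hyperref[D3]{(D3)} and \hyperref[D5]{(D5)}, and a union bound over a finite net of $\Theta_n^*\times\calH_n^*$, built from the bracketing/covering estimates already used in the proof of Theorem \ref{thm:convrate_mH}, converts the pointwise variance bound into a uniform one.

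The genuine obstacle is $A_n$, the empirical-process remainder. A H\"older step yields $|A_n|\le\sqrt n\,\|\Delta\|_1\cdot\|\bbG_n[\dot\ell_{\theta_0,\eta}-\dot\ell_{\theta_0,\eta_0}]\|_\infty$, and on $M_n\Theta_n^*$ the $\ell_1$ factor is $O(M_n s_n\sqrt{\log p/n})$, so what remains is a maximal inequality controlling the supremum, over $\eta\in\calH_n^*$, of the coordinate-wise norm of $\bbG_n[\dot\ell_{\theta_0,\eta}-\dot\ell_{\theta_0,\eta_0}]$. Because the errors are only exponentially (not sub-Gaussian) tailed, the sub-Gaussian concentration arguments of \cite{chae2019bayesian} are no longer available; instead, one runs a chaining bound driven by the $L^2(\eta_0)$-envelope
\[
\sup_{\eta\in\calH_n^*}\int\bigl(\dot\ell_\eta(y)-\dot\ell_{\eta_0}(y)\bigr)^2\eta_0(y)\,dy \longrightarrow 0,
\]
whose quantitative rate is supplied by Lemmas 6 and 12 of the Supplementary Material (referenced just above Theorem \ref{thm:BvM}), combining the Hellinger contraction $d_H(\eta,\eta_0)\lesssim\sqrt{s_n\log p/n}$ on $\calH_n^*$ with the smoothness conditions \hyperref[D1]{(D1)}--\hyperref[D3]{(D3)}, the polynomial-growth bounds \hyperref[D5]{(D5)}, and the $\sigma$-truncations in \eqref{al1}--\eqref{G4}. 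Combined with the bracketing entropy for $\calH_n'$ already appearing in the proof of Theorem \ref{thm:convrate_mH}, this yields the stated $o(1)$ bound; the precise exponent in the hypothesis $s_n^6(\log p)^{11}=o(n^{1-\zeta})$ is exactly what is needed to make the product of $\sqrt n\,\|\Delta\|_1$ with the envelope-times-entropy rate collapse to $o(1)$, completing the proof.
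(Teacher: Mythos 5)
Your algebra is fine: with $\Delta=\theta-\theta_0$, the identity $r_n=A_n+B_n^{(1)}+B_n^{(2)}-C_n$ holds, and $A_n$, $B_n^{(1)}$ do coincide with the two pieces of $r_n-\tilde r_n$ treated in the paper, while $B_n^{(2)}-C_n$ coincides with the quantity the paper calls $\tilde r_n$ (Lemma \ref{lemma:LAN}, where it is further split as $A_{n1}+A_{n2}$ using the \emph{integral} rather than Lagrange form of the Taylor remainder). You also correctly identify the chaining argument driven by the $L^2(\eta_0)$-envelope of $\dot\ell_\eta-\dot\ell_{\eta_0}$ as the main difficulty in $A_n$. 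Beyond these points, though, there are two genuine gaps, and both stem from the same misconception.

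First, you claim that conditions \hyperref[D5]{(D5)} and \eqref{eta0_4} "extend the polynomial-growth estimate $|\dddot\ell_\eta(\xi_i)|\lesssim|\epsilon_i|^{\gamma_3}+|x_i^T\Delta|^{\gamma_3}+1$ from $\eta_0$ to nearby $\eta\in\calH_n^*$," and likewise that $\V_{\eta_0}(\ddot\ell_\eta(\epsilon))$ is "uniformly bounded via \hyperref[D3]{(D3)} and \hyperref[D5]{(D5)}." This is false. Conditions \hyperref[D1]{(D1)}--\hyperref[D5]{(D5)} constrain $\eta_0$ only, while $\calH_n^*$ consists of location mixtures of $\phi_\sigma$ with $\sigma$ as small as $\sigma_{0n}=(s_n\log p)^{-1/(2a_2)}$; the derivatives of $\log\eta$ for such $\eta$ blow up as $\sigma\downarrow0$. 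The correct pointwise bounds, derived explicitly at the start of Lemma \ref{lemma:score} of the supplement, are $|\ddot\ell_\eta(y)|\lesssim(y^2+(\log n)^{4/\tau})(s_n\log p)^{2/a_2}$ and $|\dddot\ell_\eta(y)|\lesssim(|y|^3+(\log n)^{6/\tau})(s_n\log p)^{3/a_2}$. With those, the crude bound $|C_n|\lesssim\|X\Delta\|_\infty\|X\Delta\|_2^2\sup|\dddot\ell_\eta|$ acquires an extra factor $(s_n\log p)^{3/a_2}$ relative to your calculation, and the stated hypotheses do not cover it. The paper avoids this for the corresponding term precisely because it keeps the remainder in \emph{integral} form, so that the piece $A_{n2}$ is a deterministic \emph{expectation}; two integration-by-parts steps then move the third derivative from $\eta$ onto $\eta_0$ and Lemma \ref{lemma:score_L2_not_uniform}, giving $|\Eaa\dddot\ell_\eta(\cdot)|\lesssim1$ with no $(s_n\log p)^{3/a_2}$ factor. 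Your Lagrange-form $C_n$ is a random quantity with data-dependent $\xi_i$, so this cancellation is not available, and the union-bound/chaining control of $B_n^{(2)}$ likewise needs the delicate truncation on $\eta_0$-level sets carried out in Lemma \ref{lemma:Gnfupper}, not a bare appeal to \hyperref[D3]{(D3)} and \hyperref[D5]{(D5)}.

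Second, the claimed Lipschitz estimate $|\nu_{\eta_0}-\nu_\eta|\lesssim d_H(\eta,\eta_0)$ for $B_n^{(1)}$ is not justified and, on the face of it, too strong: the score functional is not Hellinger-Lipschitz for these mixtures. What the paper proves (via the recursive bootstrap in Lemmas \ref{lemma:score} and \ref{lemma:score2}) is $\sup_{\calH_n^*}\int(\dot\ell_\eta-\dot\ell_{\eta_0})^2\,dP_{\eta_0}\lesssim\epsilon_n^{4/5-\zeta}$, whence $|\nu_\eta-\nu_{\eta_0}|\lesssim\epsilon_n^{2/5-\zeta}$, and it is precisely this slower rate, combined with $\|X\Delta\|_2^2\lesssim M_n^2s_n\log p$, that forces the hypothesis $s_n^6(\log p)^{11}=o(n^{1-\zeta})$. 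Your asserted rate would make that hypothesis unnecessary for $B_n^{(1)}$, which is a signal that the step is wrong, not that a shortcut exists.
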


\begin{proof}
	Define $\tilde{r}_n(\theta,\eta)$ as in Lemma \ref{lemma:LAN}.
	Note that
	\bea
	&& \Eaa \left( \sup_{\theta\in M_n\Theta_n^*} \sup_{\eta\in\calH_n^*} |r_n(\theta,\eta)|  \right)\\
	&\le& \Eaa \left( \sup_{\theta\in M_n\Theta_n^*} \sup_{\eta\in\calH_n^*} |r_n(\theta,\eta) - \tilde{r}_n(\theta,\eta)|  \right) \\
	&&+ \Eaa \left( \sup_{\theta\in M_n\Theta_n^*} \sup_{\eta\in\calH_n^*} |\tilde{r}_n(\theta,\eta)|  \right) ,
	\eea
	and, by Lemma \ref{lemma:LAN},
	\bea
	&&\Eaa \left( \sup_{\theta\in M_n\Theta_n^*} \sup_{\eta\in\calH_n^*} |\tilde{r}_n(\theta,\eta)|  \right) \\
	&\lesssim& \frac{M_n^2 s_n^2}{\phi^2(s_n)} \log p  \cdot \sqrt{\frac{ s_n(\log p)^3 +  \left(s_n \log p \right)^{ \frac{3}{a_2} }(\log p)^4 }{n}}  \left(s_n \log p \right)^{\zeta'}   \\
	&+& \frac{M_n^3 s_n}{\phi(s_n)} \sqrt{\frac{\log p}{n}} \cdot s_n (\log p)^{\frac{3}{2}} \\
	&=&  o(1)
	\eea 
	for some small constant $\zeta'>0$ and some sequence $M_n$ when $(s_n\log p )^{1+ \frac{15}{a_2}} = o(n^{1-\zeta})$ and 
	$\left( s_n\log p \right)^{6 + \frac{5}{4a_2}}(\log p)^{\frac{5}{2}} = o(n^{1-\zeta})$. 
	Thus, it suffices to show that
	\bea
	\Eaa \left( \sup_{\theta\in M_n\Theta_n^*} \sup_{\eta\in\calH_n^*} |r_n(\theta,\eta) - \tilde{r}_n(\theta,\eta)|  \right) 
	&=& o(1).
	\eea
	
	By the definition of $r_n(\theta,\eta)$ and $\tilde{r}_n(\theta,\eta)$,
	\bean
	|r_n(\theta,\eta) - \tilde{r}_n(\theta,\eta)|
	&\le& \sqrt{n}\left| (\theta-\theta_0)^T \bbG_n \left( \dot{\ell}_{\theta_0,\eta} - \dot{\ell}_{\theta_0,\eta_0} \right)  \right|  \label{Gn_diff} \\
	&+& \frac{n}{2} \left| (\theta-\theta_0)^T (V_{n,\eta}  - V_{n,\eta_0}) (\theta-\theta_0)  \right|. \label{Vn_diff}
	\eean
	The supremum of \eqref{Vn_diff} is easily bounded above by
	\bea
	&&\sup_{\theta\in M_n\Theta_n^*} \sup_{\eta\in\calH_n^*}n \left| (\theta-\theta_0)^T (V_{n,\eta}  - V_{n,\eta_0}) (\theta-\theta_0)  \right|  \\
	&=&  \sup_{\theta\in M_n\Theta_n^*} \sup_{\eta\in\calH_n^*}|\nu_\eta - \nu_{\eta_0}| \cdot \|X(\theta-\theta_0)\|_2^2 \\
	&\lesssim& \sup_{\theta\in M_n\Theta_n^*} \sup_{\eta\in\calH_n^*} \epsilon_n^{\frac{2}{5} - \zeta} M_n^2 s_n \log p
	\eea
	by Lemma \ref{lemma:score2}, where $\epsilon_n = K_{\rm eta}\sqrt{s_n\log p/n}$, which is of order $o(1)$ under the assumption  $s_n^6 (\log p )^{11} = o(n^{1-\zeta})$. 
	Note that
	\bea
	\sqrt{n}\left| (\theta-\theta_0)^T \bbG_n \left( \dot{\ell}_{\theta_0,\eta} - \dot{\ell}_{\theta_0,\eta_0} \right)  \right| 
	&\le& \sqrt{n} \|\theta-\theta_0\|_1 \cdot \| \bbG_n ( \dot{\ell}_{\theta_0,\eta} - \dot{\ell}_{\theta_0,\eta_0} ) \|_\infty \\
	&\lesssim& \frac{M_n s_n}{\phi(s_n)} \sqrt{\log p} \cdot \sup_{\eta\in \calH_n^*} \| \bbG_n ( \dot{\ell}_{\theta_0,\eta} - \dot{\ell}_{\theta_0,\eta_0} ) \|_\infty.
	\eea
	Define 
	\bea
	\calL_{n,j} &:=& \left\{ M_n s_n \sqrt{\log p} \cdot e_j^T \left(\dot{\ell}_{\theta_0,\eta} - \dot{\ell}_{\theta_0,\eta_0} \right) : \eta \in \calH_n^*  \right\}
	\eea
	and $\calL_n := \cup_{j=1}^p \calL_{n,j}$, where $e_j$ is the $j$th unit vector in $\bbR^p$.  
	Then 
	$L_n(x,y) := M \sqrt{\log p}\cdot M_n s_n \sqrt{\log p} \cdot \sup_{\eta \in\calH_n^*} |\dot{\ell}_{\eta}(y) - \dot{\ell}_{\eta_0}(y)|$ is an envelop function of $\calL_n$, and
	\bea
	\|L_n \|_n
	&\lesssim& M_n s_n {\log p} \cdot \bigg\{ \Eaa \Big[ \sup_{\eta\in\calH_n^*} \big( \dot{\ell}_{\eta}(Y) - \dot{\ell}_{\eta_0}(Y) \big)^2\Big] \bigg\}^{\frac{1}{2}} \\
	&\lesssim& M_n s_n {\log p} \cdot \left( \frac{s_n \log p}{n} \right)^{\frac{1}{5}- \zeta} 
	\eea
	by Lemma \ref{lemma:score2}.
	We will use Corollary A.1 in \cite{chae2019bayesian}, which implies 
	\bea
	&&M_n s_n \sqrt{\log p}  \cdot \Eaa \left( \sup_{\eta\in \calH_n^*} \| \bbG_n ( \dot{\ell}_{\theta_0,\eta} - \dot{\ell}_{\theta_0,\eta_0} ) \|_\infty \right)\\
	&\lesssim& \int_0^{\|L_n\|_n} \sqrt{\log N_{[\,]}^n (\epsilon, \calL_{n} )} d\epsilon .
	\eea
	
	Note that
	\bea
	N_{[\,]}^n(\epsilon, \calL_{n,j}) 
	&\le& N_{[\, ]} \left( \frac{\epsilon}{M M_n s_n {\log p}}, \calG_n, L_2(P_{\eta_0})  \right),
	\eea  
	where $\calG_n := \{\dot{\ell}_\eta : \eta\in \calH_n^*  \}$, and 
	\bean\label{logN_L2todH}
	\begin{split}
		\log N_{[\,]} \left(\epsilon, \calG_n, L_2(P_{\eta_0})  \right)
		&\le \log N_{[\, ]} \left( \epsilon^\gamma, \calH_n^*, d_H \right) \\
		&\le \log N_{[\, ]} \left( \epsilon^\gamma, \calH_n, d_H \right).
	\end{split}
	\eean
	Let $a_n = (\log n)^{\frac{2}{\tau}}, b_{1n} = \left(s_n \log p \right)^{-\frac{1}{2a_2}}$ and $b_{2n} = \sqrt{\log n}$.
	By Lemma 3 of \cite{ghosal2007posterior}, 
	\bea
	\log N\left(\epsilon , \calH_n , \|\cdot\|_\infty \right)
	&\lesssim& \frac{a_n}{b_{1n}} \cdot \log \frac{1}{\epsilon}\cdot  \left( \log \frac{1}{\epsilon} + \log \frac{a_n}{b_{1n}}  \right).
	\eea
	Now we use the similar argument to the proof of Theorem 6 of \cite{ghosal2007posterior}. Define
	\bea
	H(x) &=& b_{1n}^{-1} \phi \left(\frac{x}{2b_{2n}} \right)I(|x| > 2a_n) + b_{1n}^{-1} \phi(0) I( |x| \le 2a_n),
	\eea
	where $\phi$ is the density function of the standard normal distribution. $H$ is an envelop function for $\calH_n$.
	For some $\varrho >0$, let $g_1,\ldots, g_T$ be a $\varrho$-net for $\|\cdot\|_\infty$, $l_i := (g_i - \varrho) \vee 0$ and $u_i := (g_i + \varrho) \wedge H$. Then, the brackets $[l_i, u_i]$ cover $\calH_n$. 
	Let $\varrho = C \epsilon^2 (a_n b_{2n})^{-1} [ \log (1/\epsilon)  ]^{-\frac{1}{2}} $ for some constant $C>0$, then for $D_n = 2a_n b_{2n} [ \log (1/\epsilon)  ]^{\frac{1}{2}} > 2a_n$,
	\bea
	\int (u_i - l_i)\, d\mu 
	&\lesssim& \|u_i - l_i \|_\infty \cdot D_n + \int_{|x|> D_n}  \frac{1}{b_{1n}}\phi \left( \frac{x}{2b_{2n}} \right)  dx \\
	&\lesssim& \varrho \cdot D_n + \frac{b_{2n}}{b_{1n}} \exp \left( - \frac{D_n^2}{8 b_{2n}^2} \right) \\
	&\lesssim& \epsilon^2 + \frac{b_{2n}}{b_{1n}}\cdot \epsilon^{ca_n^2}  \\
	&\lesssim& \epsilon^2
	\eea
	for some constant $c>0$ and any $\epsilon<1$. The second inequality follows from the Chernoff's inequality.
	Thus,
	\bea
	\log N_{[\,]} \left( \epsilon ,\calH_n , d_H \right)
	&\le& \log N_{[\,]} \left( \epsilon^2, \calH_n, \|\cdot\|_1 \right) \\
	&\le& \log N \left( C \cdot \frac{\epsilon^2}{a_n b_{2n}} \left[\log \frac{1}{\epsilon} \right]^{-\frac{1}{2}} , \calH_n, \|\cdot\|_\infty \right) \\
	&\lesssim&  \frac{a_n}{b_{1n}}\cdot \left[ \left(\log \frac{1}{\epsilon} \right)^2 + \left(\log n \right)^2 \right],
	\eea
	and by \eqref{logN_L2todH},
	\bea
	\log N_{[\,]}^n \left(\epsilon, \calL_n \right)
	&\le& \log p + \log N_{[\, ]} \left( \frac{\epsilon}{M M_n s_n {\log p}}, \calG_n, L_2(P_{\eta_0})  \right) \\
	&\lesssim& \log p + \left(s_n \log p \right)^{\frac{1}{2a_2}} [\log n]^{\frac{2}{\tau}} \cdot \left[ \left(\log \frac{1}{\epsilon} \right)^2 + \left(\log n \right)^2 \right].
	\eea  
	Then by Corollary A.1 in \cite{chae2019bayesian}, we have 
	\bean
	&& \Eaa \left( \sup_{\eta\in \calH_n^*} \| \bbG_n ( \dot{\ell}_{\theta_0,\eta} - \dot{\ell}_{\theta_0,\eta_0} ) \|_\infty \right) \cdot \frac{M_n s_n}{\phi(s_n)} \sqrt{\log p} \nonumber \\
	&\lesssim& \int_0^{\|L_n\|_n} \sqrt{\log N_{[\,]}^n (\epsilon, \calL_{n} )} d\epsilon  \nonumber \\
	&\lesssim& \int_0^{\|L_n\|_n} \sqrt{\log p} + \left(s_n \log p \right)^{\frac{1}{4a_2}} [\log n]^{\frac{1}{\tau}} \cdot \left( \log \frac{1}{\epsilon} + \log n  \right) d\epsilon \nonumber \\
	&\lesssim& \|L_n\|_n \sqrt{\log p} +  \left(s_n \log p \right)^{\frac{1}{4a_2}} [\log n]^{\frac{1}{\tau} +1} \cdot \int_0^{\|L_n\|_n}  \log \frac{1}{\epsilon}\,\, d \epsilon  \nonumber \\
	&\lesssim& M_n s_n {\log p} \cdot \left( \frac{s_n \log p}{n} \right)^{\frac{1}{5}- \zeta'} \left\{ \sqrt{\log p}  +  \left(s_n \log p \right)^{\frac{1}{4a_2}} [\log n]^{\frac{1}{\tau} +1}   \right\} \label{Gnelldot_diff_rate}
	\eean
	because $\int_0^u \log (1/\epsilon) d\epsilon \le \int_0^u \epsilon^{-1+\zeta''} d\epsilon \lesssim u^{1-\zeta''}$ for any small $\zeta''>0$ and $0<u<1$. \eqref{Gnelldot_diff_rate} converges to zero as $n\to\infty$ under the assumptions $\left( s_n\log p \right)^{6 + \frac{5}{4a_2}}(\log p)^{\frac{5}{2}} = o(n^{1-\zeta})$ and $s_n^6(\log p)^{11}= o(n^{1-\zeta})$ for some constant $\zeta>0$. 
	Thus, we have shown \eqref{Gn_diff}, and this completes the proof.  \hfill $\blacksquare$
\end{proof}

\bigskip

Now, we prove Theorem \ref{thm:BvM} using the above results (Lemma \ref{lemma:post_conc_Hn}, Lemma \ref{lemma:normal} and Lemma \ref{thm:misLAN}) and posterior convergence rate results (Theorem \ref{thm:dimupper}, Corollary \ref{cor:convrate_H} and Corollary \ref{cor:convrate_coef}).

\begin{proof}[Proof of Theorem \ref{thm:BvM}]
	Let $\Theta_n^*$ and $\calH_n^*$ be defined as \eqref{theta_n_star} and \eqref{H_n_star}, respectively.
	Define $\breve{\Pi}_\Theta := \Pi_\Theta \,|_{M_n\Theta_n^*}$ and $\breve{\Pi}_\calH := \Pi_\calH \,|_{\calH_n^*}$ as the restricted and renormalized priors on $M_n\Theta_n^*$ and $\calH_n^*$, respectively. Let $\breve{\Pi}(\cdot|D_n)$ be the posterior distribution corresponding to the prior $\breve{\Pi} = \breve{\Pi}_\Theta \times \breve{\Pi}_\calH$.
	We first prove that
	\bean
	d_V \left( \breve{\Pi}(\cdot| D_n) , \Pi(\cdot| D_n) \right) &=& o_{P_0}(1) \quad\text{ and} \label{restPi}\\
	d_V \left( \breve{\Pi}^\infty(\cdot| D_n) , \Pi^\infty(\cdot| D_n) \right) &=& o_{P_0}(1), \label{restPiinf}
	\eean
	where $\breve{\Pi}^\infty(\cdot| D_n) :=\Pi^\infty(\cdot| D_n) \, |_{M_n\Theta_n^*}$. 
	Note that for any measurable set $A \in \Theta\times \calH$,
	\bea
	\breve{\Pi}(A \mid D_n) &=& \frac{\Pi(A \cap [M_n\Theta_n^* \times \calH_n^*] \mid D_n )}{\Pi( M_n\Theta_n^* \times \calH_n^* \mid D_n )} \\
	&=& \frac{\Pi(A \mid D_n) - \Pi(A \cap [M_n\Theta_n^* \times \calH_n^*]^c \mid D_n ) }{\Pi(\Theta\times \calH_{\rm mix} \mid D_n ) - \Pi([M_n\Theta_n^* \times \calH_n^*]^c \mid D_n ) } \\
	&=& \Pi(A \mid D_n) + o_{P_0}(1)
	\eea
	by Corollaries \ref{cor:convrate_H}, \ref{cor:convrate_coef} and Lemma \ref{lemma:post_conc_Hn}, which implies \eqref{restPi}. 	
	Define 
	\bean\label{sn_set}
	\calS_n &:=& \left\{S: |S| \le \frac{s_n}{2}, \quad \|\theta_{0,S^c}\|_2 \le \frac{K_{\rm theta}}{\psi(s_n)} \sqrt{\frac{s_n \log p}{n}}  \right\} ,
	\eean
	$\Theta_S^* := \{ \theta_S \in \bbR^{|S|} : \widetilde{\theta}_S \in M_n \Theta_n^* \}$ and $H_S := \sqrt{n}(\Theta_{S}^* - \theta_{0,S})$ for some sequence  $M_n$ such that $\sqrt{\log p}=o(M_n)$ and  
	\bea
	\sup_{\theta\in M_n\Theta_n^*} \sup_{\eta\in\calH_n^*} |r_n(\theta,\eta)|  &=& o_{P_0}(1),
	\eea
	where $r_n(\theta,\eta)$ is defined in Lemma \ref{thm:misLAN}.
	Then, 
	\bea
	d\breve{\Pi}(\theta \mid D_n) &=& \sum_{S\in \calS_n} \widetilde{w}_S \cdot d \widetilde{Q}_{S}(\theta_S) d \delta_0(\theta_{S^c}), \\
	d\breve{\Pi}^\infty(\theta \mid D_n) &=& \sum_{S\in \calS_n} \widetilde{w}_S^\infty \cdot n^{-\frac{|S|}{2}}  d \widetilde{N}_{n,S}(h_S) d \delta_0(\theta_{S^c}),
	\eea
	where $\widetilde{Q}_S = Q_{S} \,|_{\Theta_S^*}$ and $\widetilde{N}_{n,S} := {N}_{n,S}\,|_{H_S}$ are the restricted and renormalized distributions, 
	\bea
	\widetilde{w}_S  &:=&  \frac{Q_S(\Theta_S^*)}{\sum_{S' \in \calS_n} w_{S'} Q_{S'} (\Theta_{S'}^*)  } \cdot w_S,    \\
	\widetilde{w}_S^\infty &:=&  \frac{N_{n,S}(H_S)}{\sum_{S' \in \calS_n} w_{S'} N_{n,S'} (H_{S'})  } \cdot w_S,
	\eea
	and $h_S = \sqrt{n}(\theta_S-\theta_{0,S}) \in H_S$. It is easy to show that
	\bean
	\sup_{S\in \calS_n} \left| 1- \frac{w_S}{\widetilde{w}_S^\infty } \right| &=& o_{P_0}(1) \quad \text{ and} \label{W1} \\
	\sup_{S\in \calS_n} d_V \left(N_{n,S}, \widetilde{N}_{n,S} \right) &=& o_{P_0}(1) \label{W2}
	\eean
	hold by Theorem \ref{thm:dimupper} and Lemma \ref{lemma:normal}. 
	Then, by Lemma 4.5 in \cite{chae2019bayesian}, 
	\bea
	&&d_V \left( \breve{\Pi}^\infty(\cdot | D_n), \Pi^\infty(\cdot| D_n) \right) \\
	&\le& 2 d_V ( \widetilde{w}^\infty, w) + \sum_{S\in \calS} w_S d_V(\widetilde{N}_{n,S} , N_{n,S} ) \\
	&\le& 2 \sum_{S\in \calS_n} \widetilde{w}_S^\infty \left|  1 - \frac{w_S}{\widetilde{w}_S^\infty} \right| + \sum_{S\in \calS_n} w_S \cdot \sup_{S\in \calS_n} d_V(\widetilde{N}_{n,S} , N_{n,S} )  \\
	&&+\,\, 4 \sum_{S\in \calS_n^c} w_S,
	\eea
	where $w = (w_S)_{S \in \calS}$ and $\widetilde{w}^\infty = (\widetilde{w}_S^\infty)_{S \in \calS_n}$.
	It implies that \eqref{restPiinf} holds by \eqref{W1}, \eqref{W2} and Theorem \ref{thm:dimupper}.
	
	Now we have \eqref{restPi} and \eqref{restPiinf}, so it suffices to prove that
	\bean\label{dV_on_rest}
	d_V \left( \breve{\Pi}(\cdot| D_n) , \breve{\Pi}^\infty(\cdot| D_n)  \right) &=& o_{P_0}(1).
	\eean
	Again by Lemma 4.5 in \cite{chae2019bayesian}, if we show that
	\bean
	d_V(\widetilde{w}, \widetilde{w}^\infty ) &=& o_{P_0}(1) \quad \text{ and} \label{W3} \\
	\sup_{S\in \calS_n} d_V ( \widetilde{Q}_S, \widetilde{N}_{n,S} ) &=& o_{P_0}(1) ,\label{W4}
	\eean
	where $\widetilde{w} = (\widetilde{w}_S)_{S \in \calS_n}$, it implies the desired result, \eqref{dV_on_rest}.
	Note that
	\bea
	d_V (\widetilde{w}, \widetilde{w}^\infty ) &=& \sum_{S\in \calS_n} |\widetilde{w}_S - \widetilde{w}_S^\infty| \\
	&=& \sum_{S\in\calS_n} \left|1- \frac{\widetilde{w}_S}{\widetilde{w}_S^\infty} \right| \cdot \widetilde{w}_S^\infty \\
	&=& \sum_{S\in\calS_n} \left|1-  Q_S(\Theta_S^*) \frac{w_S}{\widetilde{w}_S^\infty}(1+ o_{P_0}(1) ) \right| \cdot \widetilde{w}_S^\infty \\
	&=& \sum_{S\in \calS_n} |1 -Q_S(\Theta_S^*) (1+ o_{P_0}(1) )  | \cdot \widetilde{w}_S^\infty \\
	&\le& \sup_{S \in \calS_n} (1- Q_S(\Theta_S^*) ) + o_{P_0}(1) \,\,= \,\, o_{P_0}(1) .
	\eea
	The third and fourth equality hold by Theorem \ref{thm:dimupper}, Corollary \ref{cor:convrate_coef} and \eqref{W1}, respectively. Thus, we have proved \eqref{W3}. 
	For any measurable set $B$,
	\bea
	&& \breve{\Pi} (\theta_S \in B \mid D_n ,\eta, S_\theta= S) \\
	&=& \frac{\int_{B \cap \Theta_S^*} \exp \left( L_n(\widetilde{\theta}_S, \eta ) - L_n(\theta_0,\eta)  \right) \cdot g_S(\theta_S)/g_S(\theta_{0,S}) \, d\theta_S }{ \int_{\Theta_S^*} \exp \left( L_n(\widetilde{\theta}_S, \eta ) - L_n(\theta_0,\eta)  \right) \cdot g_S(\theta_S)/g_S(\theta_{0,S})\, d\theta_S} \\
	&=& \frac{\int_{B \cap \Theta_S^*} \exp \left( \sqrt{n}(\theta_S - \theta_{0,S})^T G_{n,\eta_0,S} - \frac{n}{2}(\theta_S-\theta_{0,S})^T V_{n,\eta_0,S}(\theta_S-\theta_{0,S})  \right)  d\theta_S }{ \int_{\Theta_S^*} \exp \left( \sqrt{n}(\theta_S - \theta_{0,S})^T G_{n,\eta_0,S} - \frac{n}{2}(\theta_S-\theta_{0,S})^T V_{n,\eta_0,S}(\theta_S-\theta_{0,S})  \right)  d\theta_S} \\
	&&+ o_{P_0}(1) 
	\eea
	by Lemma \ref{thm:misLAN} and
	\bea
	\sup_{S\in \calS_n} \sup_{\theta_S\in \Theta_S^*} \left|\log \frac{g_S(\theta_S)}{g_S(\theta_{0,S})} \right| 
	&=& \sup_{S\in \calS_n} \sup_{\theta_S\in \Theta_S^*} \bigg| \log \exp \left(\lambda \|\theta_{0,S}- \theta_S\|_1 \right) \bigg|  \\
	&\lesssim& \sup_{S\in \calS_n}  \lambda \cdot \frac{M_n s_n}{\phi(s_n)} \sqrt{\frac{\log p}{ n}} \,\, =\,\, o(1)
	\eea 
	for some sequence $M_n$ such that $\sqrt{\log p}=o(M_n)$   because we assume $\lambda s_n {\log p} = o(\sqrt{n})$.
	Then, 
	\bea
	\widetilde{Q}_S (h_S \in B)
	&=& \int_{\calH_n^*}  \breve{\Pi}( h_S \in B \mid D_n, \eta, S_\theta= S) d \breve{\Pi}(\eta \mid D_n, S_\theta=S) \\
	&=& \int_{\calH_n^*} \widetilde{N}_{n,S}(B) d \breve{\Pi}(\eta \mid D_n, S_\theta=S) + o_{P_0}(1) \\
	&=& \widetilde{N}_{n,S}(B)  + o_{P_0}(1),
	\eea
	which implies $\sup_{S\in \calS_n} d_V (\widetilde{Q}_S, \widetilde{N}_{n,S} ) = o_{P_0}(1)$.  \hfill $\blacksquare$
\end{proof}

\section{Proof for Strong Model Selection Consistency}
\begin{proof}[Proof of Theorem \ref{thm:selection}]
	Define $\calS_n$ and $\breve{\Pi}$ as in the proof of Theorem 3.5.
	Define the set $\calS_n' = \{S \in \calS_n : S \supsetneq S_0  \}$, then it suffices to show that $\breve{\Pi}(S_\theta\in \calS_n' \mid D_n) \lra 0$ by \eqref{restPi}. 
	Note that
	\bea
	&&\breve{\Pi}(S_\theta = S \mid D_n, \eta) \\
	&=& \frac{\pi_p(|S|) \binom{p}{|S|}^{-1} \int_{\Theta_S^*}\exp \left(L_n(\widetilde{\theta}_S, \eta) - L_n(\theta_0,\eta) \right) g_S(\theta_S)d\theta_S }{\sum_{S\in \calS_n}\pi_p(|S|) \binom{p}{|S|}^{-1} \int_{\Theta_S^*}\exp \left(L_n(\widetilde{\theta}_S, \eta) - L_n(\theta_0,\eta) \right) g_S(\theta_S)d\theta_S }.
	\eea
	Then, by Lemma \ref{thm:misLAN},
	\bea
	&&\breve{\Pi}(S_\theta \in \calS_n' \mid D_n, \eta) \\
	&=& \frac{\sum_{S\in \calS_n'} \pi_p(|S|) \binom{p}{|S|}^{-1} \int_{\Theta_S^*}\exp \left(L_n(\widetilde{\theta}_S, \eta) - L_n(\theta_0,\eta) \right) g_S(\theta_S)d\theta_S }{\sum_{S\in \calS_n}\pi_p(|S|) \binom{p}{|S|}^{-1} \int_{\Theta_S^*}\exp \left(L_n(\widetilde{\theta}_S, \eta) - L_n(\theta_0,\eta) \right) g_S(\theta_S)d\theta_S } \\
	&\le& \sum_{S\in \calS_n'} \frac{\what{w}_S }{\what{w}_{S_0}}  e^{2\xi_n} \\
	&\lesssim& \sum_{s=s_0+1}^{s_n/2} \frac{\pi_p(s)}{\pi_p(s_0)} \binom{s}{s_0} \left(\frac{\lambda \sqrt{\pi}}{\sqrt{2\nu_{\eta_0}}}  \right)^{s-s_0} \\
	&& \times 	\max_{|S|=s} \left[ \frac{|X_{S_0}^T X_{S_0}|^{1/2}}{|X_{S}^T X_{S}|^{1/2}} \exp \left\{ \frac{1}{2\nu_{\eta_0}} \|(H_S- H_{S_0}) \dot{L}_{n,\eta_0} \|_2^2  \right\}  \right]
	\eea
	for any $\eta$ and some sequence $\xi_n\to 0$, where 
	\bea
	&& \what{w}_S \\
	&\propto& \pi_p(|S|) \binom{p}{|S|}^{-1} \times \\
	&&  \int_{\Theta_S^*}\exp \left( \sqrt{n}(\theta_S-\theta_{0,S})^T G_{n,S} - \frac{n}{2}(\theta_S-\theta_{0,S})^T V_{n,S}(\theta_S-\theta_{0,S})  \right) g_S(\theta_S)d\theta_S .
	\eea
	Note that, by the condition on $\pi_p$ and the definition of $\psi^2(s)$, $\pi_p(s)/\pi_p(s_0) \le A_2^{s-s_0} p^{-A_4(s-s_0)}$ and $|X_{S_0}^T X_{S_0}|/|X_{S}^T X_{S}| \le (n \psi^2(s_n))^{|S|-s_0}$ for any $S\in \calS_n'$. Thus, it suffices to prove that
	\bean\label{quadL_bound}
	&&\bbP_{\eta_0} \Big(\, \frac{1}{2\nu_{\eta_0}}\|(H_S-H_{S_0})\dot{L}_{n,\eta_0} \|_2^2 > K_{\rm sel}(s-s_0)\log p , \text{ for some } S\in\calS_n' \, \Big) \nonumber\\
	&=& o(1) \quad\quad\quad
	\eean
	for some positive constant $K_{\rm sel}$ depending only on $\eta_0$ such that $ A_4 > K_{\rm sel}$.
	
	The left hand side of \eqref{quadL_bound} is bounded above by 
	\bea
	&& \sum_{s=s_0+1}^{s_n/2} \binom{p-s_0}{s-s_0} \bbP_{\eta_0} \Big( \|(H_S-H_{S_0})\dot{L}_{n,\eta_0} \|_2^2 > 2\nu_{\eta_0} K_{\rm sel}(s-s_0)\log p  \Big)  \\
	&\le& \sum_{s=s_0+1}^{s_n/2} \binom{p-s_0}{s-s_0} e^{ -t \cdot 2\nu_{\eta_0}  K_{\rm sel}(s-s_0)(\log p - \nu_{\eta_0}K_{\rm sel}^{-1})  } \times \bbE_{\theta_0,\eta_0} e^{t \|(H_S-H_{S_0})\dot{L}_{n,\eta_0} \|_2^2}
	\eea
	for any $t >0$, where $H_S = X_S (X_S^T X_S)^{-1} X_S^T$. Note that $\dot{\ell}_{\eta_0}(y_i-x_i^T\theta_0)$ is a sub-Gaussian by assumption.
	By Lemma B.2 in \cite{chae2019bayesian} (Hanson-Wright inequality), 
	\bea
	\bbE_{\theta_0,\eta_0} e^{t_0 \|(H_S-H_{S_0})\dot{L}_{n,\eta_0} \|_2^2}
	&\lesssim& e^{C(|S|-s_0)}
	\eea
	for some positive constants $C$ and $t_0$ depending only on $\eta_0$. 
	Thus, if we choose $K_{\rm sel} = (\nu_{\eta_0} t_0)^{-1}$, the left hand side of \eqref{quadL_bound} tends to zero as $n\to\infty$.  \hfill $\blacksquare$
\end{proof}

\section{Auxiliary Lemmas}\label{sec:auxiliary}

We first introduce Lemma \ref{lemma:intpq_ub}, which is used to prove lemmas \ref{lemma:score}, \ref{lemma:score2} and \ref{lemma:score_L2_not_uniform}.

\begin{lemma}\label{lemma:intpq_ub}
	Let $B$ be a subset of $\bbR$ and for given $\epsilon>0$, $p$ and $q$ be probability densities on $\bbR$ such that $d_H^2(p,q) \le \epsilon^2$. 
	Suppose $M_\delta^2 := \int_B p (p/q)^\delta < \infty$ for some $\delta \in (0,1)$.
	Then,
	\bea
	\int_B  p \left( \log \frac{p}{q} \right)^2 &\le& 20 \epsilon^2 \left[ \frac{1}{\delta}   \left(1 \vee \log \frac{M_\delta}{\epsilon} \right)  \right]^2.
	\eea
\end{lemma}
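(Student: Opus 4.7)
The plan is a threshold-split argument in the Wong--Shen tradition (compare Lemma B.2 of \cite{shen2013adaptive}, which already appeared in the proof of Lemma \ref{lemma:modif_of_shen2013}). First reparameterize via $u=\sqrt{p/q}$, so that $d_H^2(p,q)=\int q(u-1)^2\,d\mu$ and
\[
\int_B p\,\log^2(p/q)\,d\mu \;=\; 4\int_B q\, u^2(\log u)^2\,d\mu.
\]
I will split the second integral according to whether $u\le T$ or $u>T$ for a threshold $T\ge e$ to be chosen later, and bound the bulk piece by the Hellinger distance while controlling the tail piece through the moment hypothesis $\int_B p(p/q)^\delta = M_\delta^2$.

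For the bulk, the elementary inequalities $|u\log u|\le |u-1|$ on $(0,1]$ and $|u\log u|\le (1+\log u)|u-1|$ on $[1,\infty)$ (both proved by studying the auxiliary function $f(u)=u\log u/(u-1)$ and reducing to the calculus fact $u(1-\log u)\le 1$) yield
\[
u^2(\log u)^2 \;\le\; \bigl(2(\log T)^2+2\bigr)(u-1)^2\quad\text{whenever } 0<u\le T,
\]
so this region contributes at most a constant multiple of $(1+(\log T)^2)\,\epsilon^2$. For the tail, I use the inequality $\log u\le (2/\delta)(u^{\delta/2}-1)$ for $u\ge 1$ (obtained by matching values and derivatives at $u=1$ and then comparing second derivatives, which requires $\delta<1$); this gives $(\log u)^2\le (2/\delta)^2 u^\delta$. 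Combining with Cauchy--Schwarz and the Markov-style bound $\int_B p\,\mathbf{1}\{u>T\}\le M_\delta^2/T^{2\delta}$,
\[
\int_B p(\log u)^2\,\mathbf{1}\{u>T\}\,d\mu \;\le\; (2/\delta)^2\int_B p\,u^\delta\,\mathbf{1}\{u>T\}\,d\mu \;\le\; (2/\delta)^2\, M_\delta^2/T^{\delta}.
\]

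Balancing the two contributions by taking $T^\delta=M_\delta^2/\epsilon^2$, so that $\log T=(2/\delta)\log(M_\delta/\epsilon)$ (with $T$ truncated below at $e$ in the degenerate case $M_\delta\approx\epsilon$), immediately produces a bound of the advertised form $C\,\epsilon^2\,[\delta^{-1}(1\vee\log(M_\delta/\epsilon))]^2$. The main obstacle is purely arithmetic: the factor $4$ from the $u$-substitution, the $(2/\delta)^2$ from the tail, and the $(\log T)^2$ bulk constant naturally combine into a prefactor of order $1/\delta^2$, but sharpening the absolute constant down to the advertised $20$ requires treating $u\le 1$ and $1<u\le T$ separately in the bulk (so that the benign constant $1$, rather than $(1+\log T)^2$, is used on the former) and choosing the split with some care. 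No conceptual difficulty remains beyond this bookkeeping.
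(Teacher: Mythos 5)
Your overall strategy is the same Wong--Shen threshold split the paper uses: set $u=\sqrt{p/q}$, split at $u\le T$ versus $u>T$, control the bulk by the Hellinger distance and the tail by the moment $M_\delta$. Your bulk estimate (from $|u\log u|\le|u-1|$ on $(0,1]$ and $|u\log u|\le(1+\log u)|u-1|$ on $[1,T]$) is fine and is in fact slightly tighter than the paper's, which works in the variable $r=\sqrt{p/q}-1$ and uses monotonicity of $(r+1)\log(r+1)/r$ to reach $16\epsilon^2(\log T)^2$, versus your $8\epsilon^2\left((\log T)^2+1\right)$.

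The gap is in the tail, and it is not merely bookkeeping. You replace $\log u$ by the \emph{global} majorant $(2/\delta)(u^{\delta/2}-1)$, so $(\log u)^2\le(2/\delta)^2u^\delta$ for all $u\ge1$ and the $\log T$ factor disappears; the threshold then enters only through $\int_B p\,u^\delta\, I(u>T)\le M_\delta^2/T^\delta$. The paper instead keeps the threshold in the estimate: since $(\log x)^2/x^\delta$ is decreasing for $x\ge e^{2/\delta}$, on $\{p/q>T^2\}$ one has $(\log(p/q))^2\le(p/q)^\delta\cdot(2\log T)^2/T^{2\delta}$, yielding a tail bound $4M_\delta^2(\log T)^2/T^{2\delta}$. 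That extra $T^\delta$ in the denominator, together with the retained $(\log T)^2$, is what lets the paper balance at $T^\delta=M_\delta/\epsilon$, where $\log T=\delta^{-1}(1\vee\log(M_\delta/\epsilon))$ and the two contributions $16\epsilon^2(\log T)^2+4\epsilon^2(\log T)^2$ sum to exactly $20$. Your tail forces $T^\delta=M_\delta^2/\epsilon^2$ (so that $M_\delta^2/T^\delta\le\epsilon^2$), which doubles $\log T$ and thus quadruples the dominant bulk term; tracking your constants lands near $56\,\epsilon^2[\delta^{-1}(1\vee\log(M_\delta/\epsilon))]^2$, not $20$. Treating $u\le1$ and $1<u\le T$ separately, which you already do, does not fix this; you would need to swap your global tail inequality for the monotonicity-at-the-threshold argument. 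Since the lemma is invoked downstream only through $\lesssim$, your constant is adequate for the paper's purposes, but the proposal as written does not prove the constant $20$ asserted in the statement. Two minor points: $\log u\le(2/\delta)(u^{\delta/2}-1)$ holds for every $\delta>0$, not just $\delta<1$; and your displayed tail chain goes through with neither Cauchy--Schwarz nor the separate Markov bound once you note $u^\delta=(p/q)^{\delta/2}\le(p/q)^\delta\, T^{-\delta}$ on $\{p/q>T^2\}$.
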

\begin{proof}
	The main strategy for the proof is similar to the proof of Theorem 5 in \cite{wong1995probability}.
	Note that 
	\bea
	\int_B p \left(\log \frac{p}{q} \right)^2 
	&\le& \int_{0< p/q \le K^2} p \left(\log \frac{p}{q} \right)^2 + \int_{B \cap (p/q > K^2)} p \left(\log \frac{p}{q} \right)^2
	\eea
	for any $K>0$. 
	Let $K^\delta = e\vee (M_\delta/\epsilon) >1$ and $r = \sqrt{p/q}-1$.
	Then,
	\bea
	\int_{0< p/q \le K^2} p \left(\log \frac{p}{q} \right)^2 
	&=& \int_{-1< r \le K-1} q (r+1)^2 (2\log (r+1))^2 \\
	&=& \int_{-1< r \le K-1,\, r\neq 0} qr^2 \left(\frac{r+1}{r} \right)^2 (2\log (r+1))^2 \\
	&\le& 16\int_{-1< r \le K-1,\, r\neq 0} qr^2  (\log K)^2 \,\,\, \le \,\,\, 16 \epsilon^2 (\log K)^2
	\eea
	because $(x+1)/x \log(x+1)$ is increasing for $x>-1, x\neq 0$ and $\int qr^2 = d_H^2(p,q) \le \epsilon^2$ by assumption.
	On the other hand,
	\bea
	\int_{B \cap (p/q > K^2)} p \left(\log \frac{p}{q} \right)^2
	&=& \int_{B \cap (p/q > K^2)} p \left(\frac{p}{q} \right)^{\delta} \frac{(\log \frac{p}{q} )^2}{(\frac{p}{q} )^{\delta}} \\
	&\le& \int_{B \cap (p/q > K^2)} p \left(\frac{p}{q} \right)^{\delta} \frac{(2\log K)^2}{K^{2\delta}}\\
	&\le& 4 M_\delta^2  \frac{( \log K)^2}{K^{2\delta}},
	\eea
	because $\log x/ x^{\delta}$ is decreasing for $x \ge e^{1/\delta}$.
	Thus, we have
	\bea
	\int_B p \left(\log \frac{p}{q} \right)^2 
	&\le& 16\epsilon^2 (\log K)^2 + 4 M_\delta^2  \frac{( \log K)^2}{K^{2\delta}} \\
	&\le& 20\epsilon^2 \left[ \frac{1}{\delta}   \left(1 \vee \log \frac{M_\delta}{\epsilon} \right)  \right]^2
	\eea
	by the definition of $K$.  \hfill $\blacksquare$
\end{proof}

\bigskip
\noindent
The following lemma gives a (uniform) convergence rate for the score function, which plays an important role in proving the BvM theorem.
This lemma is used to prove lemmas \ref{lemma:normal} and \ref{lemma:Gnelldot}.

\begin{lemma}\label{lemma:score}
	Let $\epsilon_n = K_{\rm eta}\sqrt{s_n \log p /n}$ and assume that $(s_n \log p)^{2} = o(n)$. For any constant $\zeta>0$, there exists a constant $K_\zeta>0$ not depending  on $(n,p)$ such that
	\bea
	\int \sup_{\eta\in \calH_n^*} \left(\dot{\ell}_\eta(y) - \dot{\ell}_{\eta_0}(y) \right)^2 dP_{\eta_0} (y)
	&\le& K_\zeta \left(\epsilon_n \right)^{\frac{4}{5}-\zeta} (s_n \log p)^{\frac{16}{5a_2}}
	\eea
	for any $\eta_0$ satisfying  \hyperref[D1]{(D1)}-\hyperref[D5]{(D5)} and all sufficiently large $n$, where $\calH_n^*$ defined at \eqref{H_n_star}.
\end{lemma}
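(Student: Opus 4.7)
The plan is to split at a polylogarithmic radius $T_n$ chosen slightly larger than $a_n=(\log n)^{2/\tau}$ and to estimate $\int\sup_{\eta\in\calH_n^*}(\dot{\ell}_\eta-\dot{\ell}_{\eta_0})^2\,dP_{\eta_0}$ separately on the bulk $A_n:=\{|y|\le T_n\}$ and on its complement. On $A_n^c$ I would invoke the uniform envelope $|\dot{\ell}_\eta(y)|\le\sigma_{0n}^{-2}(|y|+a_n)$, valid for all $\eta\in\calH_n^*$ because $\dot{\ell}_\eta(y)=-\sigma^{-2}\,\bbE[y-Z\mid Y=y]$ with $|Z|\le a_n$ almost surely and $\sigma\ge\sigma_{0n}$, together with $|\dot{\ell}_{\eta_0}(y)|\le C_{\eta_0}(|y|^{\gamma_1}+1)$ from \hyperref[D5]{(D5)} and the exponential tail $\eta_0(y)\lesssim e^{-b|y|^\tau}$ from \hyperref[D2]{(D2)}. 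Together these show that the tail contribution is of order $\exp(-(b/2)T_n^\tau)$ up to polynomial-in-$\sigma_{0n}^{-1}$ factors, which is negligible relative to the target rate once $T_n$ is a suitably large constant multiple of $(\log n)^{2/\tau}$.

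On the bulk, the core idea is an interpolation argument for $f:=\log(\eta/\eta_0)$ connecting two ingredients. The size of $f$ is controlled by Hellinger closeness through Lemma~\ref{lemma:intpq_ub}, applied with a fixed small $\delta\in(0,1)$ and the moment $\int\eta_0(\eta_0/\eta)^\delta<\infty$ verified on $\calH_n^*$ using \hyperref[D2]{(D2)}, \hyperref[D4]{(D4)} and the support and variance constraints defining $\calH_n^*$; the conclusion is $\int f^2\,\eta_0\lesssim\epsilon_n^2\log^2(1/\epsilon_n)$. The derivatives of $f$ are controlled by the mixture structure: differentiating $\eta=\phi_\sigma*\widebar{F}$ repeatedly and using $|Z|\le a_n$ and $\sigma\ge\sigma_{0n}$ gives $|\ell_\eta^{(k)}(y)|\lesssim\sigma_{0n}^{-2k}(|y|+a_n)^{O(k)}$ on $A_n$, while \hyperref[D5]{(D5)} and \hyperref[D1]{(D1)} provide analogous growth bounds for $\ell_{\eta_0}^{(k)}$. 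A Gagliardo--Nirenberg / Kolmogorov-type inequality of the form $\|f'\|^2\lesssim\|f\|^{\alpha}\|f^{(m)}\|^{\beta}$, combined with an $L^2(\eta_0)\to L^\infty(A_n)$ promotion for $\|f\|$ that uses the uniform Lipschitz bound $\|f'\|_{L^\infty(A_n)}\lesssim\sigma_{0n}^{-2}(T_n+a_n)$ at the cost of an arbitrarily small polynomial loss $\epsilon_n^{-\zeta}$, then delivers the target $\epsilon_n^{4/5-\zeta}(s_n\log p)^{16/(5a_2)}$ after balancing the interpolation order against the $\sigma_{0n}^{-2}$ scaling of derivatives.

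The main obstacle will be the quantitative bookkeeping in the interpolation step: one has to (i) pick the interpolation order $m$ so that the $\sigma_{0n}^{-2}$ scaling compounds to exactly $\sigma_{0n}^{-32/5}=(s_n\log p)^{16/(5a_2)}$, (ii) promote the $L^2(\eta_0)$-bound on $f$ to a pointwise bound on $A_n$ losing only the $\zeta$-factor, which is where the Lipschitz control on $f$ enters crucially, and (iii) verify the moment hypothesis $\int\eta_0(\eta_0/\eta)^\delta<\infty$ uniformly on $\calH_n^*$, which rests on \hyperref[D2]{(D2)} combined with the compact support of $\widebar{F}$ on $[-a_n,a_n]$ and the variance ceiling $\sigma^2<\log n$ in the definition of $\calH_n^*$. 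The polylogarithmic factors arising from $T_n$, $a_n$ and $\log(1/\epsilon_n)$ are inessential and can be absorbed into the $\zeta$-loss.
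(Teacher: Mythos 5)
Your overall blueprint — split tail from bulk, bound $\int f^2\eta_0$ for $f:=\log(\eta/\eta_0)$ via Lemma~\ref{lemma:intpq_ub}, then interpolate against higher-derivative envelopes to recover a pointwise control on $f'$ — is the same one the paper implements, and the fixed-point balancing you describe does reproduce the exponent $4/5$. The genuine gap is in the $L^2(\eta_0)\to L^\infty(A_n)$ promotion for the \emph{unweighted} $f$. On any bulk radius $T_n$ large enough to make the tail integral $\sigma_{0n}^{-4}\int_{|y|>T_n}(|y|+a_n)^2e^{-b|y|^\tau}\,dy$ negligible you need $T_n\gtrsim C(\log n)^{1/\tau}$ with $C$ a large constant (and your choice $T_n\gtrsim(\log n)^{2/\tau}$ is far larger still), so across the bulk $\eta_0$ varies by a factor of $e^{-bT_n^\tau}$, which is at least a large power of $n$, and for your $T_n$ is $e^{-b(\log n)^2}$. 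If $|f(y_0)|=M$ near the edge of $A_n$ and $\|f'\|_{L^\infty(A_n)}\le L$, the promotion only gives $M^3\eta_0(y_0)/L\lesssim\epsilon_n^2$, i.e.\ $M\lesssim\bigl(\epsilon_n^2L/\eta_0(y_0)\bigr)^{1/3}$, and the factor $\eta_0(y_0)^{-1/3}$ is nowhere near an $\epsilon_n^{-\zeta}$ loss; it destroys the bound. An unweighted $L^2(\eta_0)\to L^\infty$ promotion at polynomial cost is simply not available here.

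The paper sidesteps this by running the entire interpolation on the \emph{weighted} object $f_\eta(y):=(\ell_\eta(y)-\ell_{\eta_0}(y))^2\eta_0(y)$ (equivalently $u:=|f|\sqrt{\eta_0}$) rather than on $f$. Differentiating the weight only brings in the polynomially bounded score $\dot\ell_{\eta_0}$ via (D5), so the Lipschitz constant of $f_\eta$ is polynomial in $(s_n\log p)^{1/a_2}$ and $\log n$ with no exponential blow-up; the $L^1\to L^\infty$ promotion from $\int_B f_\eta\lesssim\delta_n^2$ to the pointwise bound \eqref{first_up} is therefore genuinely cheap, and the bootstrap \eqref{new_feta_ub} iterates it with an improving Lipschitz constant, converging to the fixed-point exponent $8/5$ for $f_\eta$ and hence $4/5$ for the target integral. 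To repair your argument, replace the unweighted promotion by the weighted one for $u$ or $f_\eta$, take the bulk radius $\asymp(\log(1/\epsilon_n))^{1/\tau}$ as in the paper's set $A$ rather than $(\log n)^{2/\tau}$, and track the $\sigma_{0n}^{-2}$ scaling through the weighted interpolation to land on the stated power $(s_n\log p)^{16/(5a_2)}$; you will also need the ratio bound \eqref{eta0_4} of (D5) to control $\eta_0(y)/\eta_0(y\pm x)$ in the difference-quotient step, which is where the restriction to the smaller bulk radius becomes essential.
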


\begin{proof}
	We first state some inequalities that we frequently use in the proof. 
	For any $\eta \in \calH_n^*$ and any $y\in \bbR$,
	\bea
	| \ell_{\eta}(y) | &=& \Big|\log \big\{ \int (2\pi \sigma^2)^{-1/2} \exp \big(  -(y-z)^2 /(2\sigma^2) \big)  d\widebar{F}(z)  \big\}    \Big|  \\
	&\le& \Big|\log \Big\{  (s_n\log p)^{\frac{1}{2a_2}} \exp \Big( -(y^2  + (\log n )^{\frac{4}{\tau}}) (s_n\log p )^{\frac{1}{a_2}}  \Big)  \Big\}    \Big|  \\
	&\le& \frac{1}{a_2} \log (s_n\log p) + \big\{  y^2 + (\log n )^{\frac{4}{\tau}} \big\} (s_n \log p)^{\frac{1}{a_2}}   \\
	&\le& 2 \big\{ y^2 + (\log n )^{\frac{4}{\tau}}  \big\}(s_n \log p)^{\frac{1}{a_2}} ,\\
	|\dot{\ell}_{\eta}(y) | &=& \Big| \frac{\int - (\frac{y-z}{\sigma^2}) \phi_\sigma(y-z)d\widebar{F}(z) }{ \int \phi_\sigma(y-z)d\widebar{F}(z) }  \Big|\\
	&\le& \frac{1}{\sigma^2} \big\{ |y| + (\log n )^{\frac{2}{\tau}}   \big\}  \\
	&\le& \big\{ |y| + (\log n )^{\frac{2}{\tau}}   \big\} (s_n \log p)^{\frac{1}{a_2}}  ,\\
	|\ddot{\ell}_{\eta}(y) | &=& \Big| \frac{\ddot{\eta}(y)}{\eta(y)} -  \Big\{ \frac{\dot{\eta}(y)}{\eta(y)}  \Big\}^2  \Big| \\
	&\le& \frac{|\ddot{\eta}(y)|}{\eta(y)} + |\dot{\ell}_{\eta}(y) |^2 \\
	&\le& \frac{1}{\eta(y)} \Big| \int\frac{1}{\sigma^2}\phi_\sigma(y-z)d\widebar{F}(z) + \int \frac{(y-z)^2}{\sigma^4}\phi_\sigma(y-z)d\widebar{F}(z)  \Big| \\
	&& + 2 \big\{ y^2 + (\log n )^{\frac{4}{\tau}}   \big\} (s_n \log p)^{\frac{2}{a_2}} \\
	&\le& \frac{1}{\sigma^2} + \frac{2}{\sigma^4} \big\{ y^2 + (\log n)^{\frac{4}{\tau}}  \big\}  + 2 \big\{ y^2 + (\log n )^{\frac{4}{\tau}}   \big\} (s_n \log p)^{\frac{2}{a_2}} \\
	&\le& 5 \big\{ y^2 + (\log n )^{\frac{4}{\tau}}   \big\} (s_n \log p)^{\frac{2}{a_2}} 
	\eea
	and 
	\bea
	|\dddot{\ell}_{\eta}(y) | &=&  \Big|\frac{\dddot{\eta}(y)}{\eta(y)} -  \frac{\dot{\eta}(y)\ddot{\eta}(y) }{\{\eta(y)\}^2 } - 2 \dot{\ell}_{\eta}(y) \ddot{\ell}_{\eta}(y) \Big| \\
	&\le& \frac{1}{\eta(y)} \Big\{ \int \frac{(y-z)}{\sigma^4}\phi_\sigma(y-z)d \widebar{F}(z) + \int \frac{2|y-z|}{\sigma^4} \phi_\sigma(y-z)d \widebar{F}(z) \\
	&& + \int \frac{|y-z|^3}{\sigma^6} \phi_\sigma(y-z)d \widebar{F}(z)  \Big\} \\
	&&+ \big\{ |y| + (\log n )^{\frac{2}{\tau}}   \big\} (s_n \log p)^{\frac{1}{a_2}} \, 3 \big\{ y^2 + (\log n )^{\frac{4}{\tau}}   \big\} (s_n \log p)^{\frac{2}{a_2}} \\
	&& + 2 \big\{ |y| + (\log n )^{\frac{2}{\tau}}   \big\} (s_n \log p)^{\frac{1}{a_2}} \, 5 \big\{ y^2 + (\log n )^{\frac{4}{\tau}}   \big\} (s_n \log p)^{\frac{2}{a_2}} \\
	&\le& 43 \big\{ |y|^3 + (\log n)^{\frac{6}{\tau}} \big\} (s_n \log p)^{\frac{3}{a_2}}.
	\eea

	Assume that a small $\zeta>0$ is given.
	Let $A = \{ y\in \bbR : |y| \le C_1 \left(\log (1/\epsilon_n) \right)^{\frac{1}{\tau}} \}$ for some large constant $C_1>0$. 
	Note that 
	\bea
	&& \int_{A^c} \sup_{\eta\in \calH_n^*} \left( \dot{\ell}_\eta(y) - \dot{\ell}_{\eta_0}(y) \right)^2 dP_{\eta_0}(y) \\
	&\lesssim& \int_{A^c} \sup_{\eta\in\calH_n^*} \left(\dot{\ell}_\eta(y) \right)^2 dP_{\eta_0}(y) + \int_{A^c} \left(\dot{\ell}_{\eta_0}(y) \right)^2 dP_{\eta_0}(y) .
	\eea
	It is easy to show that
	\bea
	&& \int_{A^c} \sup_{\eta\in\calH_n^*} \left(\dot{\ell}_\eta(y) \right)^2 dP_{\eta_0}(y) \\
	&\lesssim& \int_{y > C_1 (\log \frac{1}{\epsilon_n})^{\frac{1}{\tau}}} \left(y^2 + [\log n]^{\frac{4}{\tau}} \right) e^{-b y^\tau} dy \cdot (s_n \log p)^{\frac{2}{a_2}} \\
	&\lesssim& \left(\epsilon_n \right)^{\frac{b}{2}C_1^\tau} \cdot (s_n \log p)^{\frac{2}{a_2}} (\log n)^{\frac{4}{\tau}} \,\, \lesssim\,\, \epsilon_n
	\eea
	for some constant large $C_1>0$ by the assumption $(s_n \log p)^{2} = o(n)$. Since 
	\bea
	\int_{A^c} \left(\dot{\ell}_{\eta_0}(y) \right)^2 dP_{\eta_0}(y) 
	&\lesssim& \int_{y > C_1 (\log \frac{1}{\epsilon_n})^{\frac{1}{\tau}}} (|y|^{\gamma_1} +1 ) e^{-b y^\tau } dy   \\
	&\lesssim& \epsilon_n
	\eea
	for some large constant $C_1>0$, we have
	\bea
	\int_{A^c} \sup_{\eta\in \calH_n^*} \left(\dot{\ell}_\eta(y) - \dot{\ell}_{\eta_0}(y) \right)^2 dP_{\eta_0} (y)
	&\lesssim& \epsilon_n.
	\eea
	Thus, it suffices to prove 
	\bea
	\int_{A} \sup_{\eta\in \calH_n^*} \left(\dot{\ell}_\eta(y) - \dot{\ell}_{\eta_0}(y) \right)^2 dP_{\eta_0}(y) 
	&\le& K_\zeta \left(\epsilon_n \right)^{\frac{4}{5}-\zeta} (s_n \log p)^{\frac{16}{5a_2}}
	\eea
	for some positive constants $\zeta$ and $K_\zeta$ not depending on $(n,p)$.
	
	Define for any $x$ and  $y \in\bbR$,
	\bea
	d_\eta(x,y) &:=& \frac{\ell_\eta(y+x) - \ell_\eta(y)}{x} - \frac{\ell_{\eta_0}(y+x) - \ell_{\eta_0}(y)}{x},
	\eea
	then we have that
	\bean
	&& \int_{A} \sup_{\eta\in \calH_n^*} \left(\dot{\ell}_\eta(y) - \dot{\ell}_{\eta_0}(y) \right)^2 dP_{\eta_0}(y) \nonumber\\
	&\lesssim& \int_{A} \sup_{\eta\in \calH_n^*} \left(\dot{\ell}_\eta(y) - \dot{\ell}_{\eta_0}(y) - d_\eta(x,y) \right)^2 dP_{\eta_0} (y)  \label{score_T1} \\
	&+& \frac{1}{x^2}\int_A \sup_{\eta\in\calH_n^*} \left( x \, d_\eta(x,y) \right)^2 dP_{\eta_0}(y). \label{score_T2}
	\eean
	One can obtain the upper bound for \eqref{score_T1} using 
	\bea
	| \dot{\ell}_\eta(y) - \dot{\ell}_{\eta_0}(y) - d_\eta(x,y)|
	&\le& \left| \dot{\ell}_\eta(y) - \frac{\ell_\eta(y+x) - \ell_\eta(y)}{x} \right| \\
	&& +\,\, \left| \dot{\ell}_{\eta_0}(y) - \frac{\ell_{\eta_0}(y+x) - \ell_{\eta_0}(y)}{x} \right| \\
	&\le& |x| \cdot \left\{ |\ddot{\ell}_\eta(y_1)| + |\ddot{\ell}_{\eta_0}(y_2)| \right\} \\
	&\lesssim& |x| \cdot \left\{ y^2 +  (\log n)^{\frac{4}{\tau}} \right\} \left( s_n \log p \right)^{\frac{2}{a_2}} \\
	&\lesssim& |x| (s_n \log p )^{\frac{2}{a_2}} (\log n)^{\frac{4}{\tau}} 
	\eea
	for any $\eta\in \calH_n^*$, $y\in A$, small $|x|$ and some $|y-y_1| \vee |y-y_2| \le |x|$ by the Taylor expansion. Thus, 
	\bean\label{ellelldeta}
	\int_{A} \sup_{\eta\in \calH_n^*} \left(\dot{\ell}_\eta(y) - \dot{\ell}_{\eta_0}(y) - d_\eta(x,y) \right)^2 dP_{\eta_0} (y) 
	&\lesssim& x^2 \cdot \left( s_n \log p \right)^{\frac{4}{a_2}} [\log n]^{\frac{8}{\tau}}. \quad \quad \,\,\,
	\eean

	Note that $|x\, d_\eta(x,y)| \le |\ell_\eta(y+x) - \ell_{\eta_0}(y+x) | + |\ell_\eta(y) - \ell_{\eta_0}(y)|$ and
	\bea
	&& \int_A \sup_{\eta\in \calH_n^*} \left( \ell_\eta(y+x) - \ell_{\eta_0}(y+x) \right)^2 dP_{\eta_0}(y) \\
	&=& \int_A \sup_{\eta\in \calH_n^*} \left( \ell_\eta(y+x) - \ell_{\eta_0}(y+x) \right)^2 \eta_0(y+x) \cdot \frac{\eta_0(y)}{\eta_0(y+x)} dy \\
	&\lesssim& \int_A \sup_{\eta\in \calH_n^*} \left( \ell_\eta(y+x) - \ell_{\eta_0}(y+x) \right)^2 \eta_0(y+x) \cdot e^{b'|y|^{\tau'}} dy 
	\eea
	provided that $|x|$ is small, by condition \hyperref[D5]{(D5)}.
	To calculate the upper bound for \eqref{score_T2}, we first find an upper bound for $f_\eta(y) := (\ell_\eta(y) - \ell_{\eta_0}(y))^2 \eta_0(y)$  on $y\in A$ and $\eta\in \calH_n^*$. 
	Let $\delta_n := \epsilon_n \log (1/\epsilon_n)$ and $B:= \left\{ y\in \bbR : |y| \le 2 C_1 (\log (1/\delta_n))^{\frac{1}{\tau}} \right\}$, so that $A \subset B$ for all sufficiently large $n$.
	By the triangle inequality and the definition of $\calH_n^*$, 
	\bean\label{dot_feta}
	\begin{split}
		| \dot{f}_\eta(y) | &=  \left| \, 2(\ell_\eta(y) -\ell_{\eta_0}(y) ) (\dot{\ell}_\eta(y) - \dot{\ell}_{\eta_0}(y) ) \eta_0(y) + (\ell_\eta(y) -\ell_{\eta_0}(y) )^2 \dot{\eta}_0(y)  \,\right| \\
		&\lesssim \sqrt{f_\eta(y)} \sqrt{\eta_0(y)} \left( |\dot{\ell}_\eta(y) - \dot{\ell}_{\eta_0}(y)| +   |\ell_\eta(y) - \ell_{\eta_0}(y)|\cdot |\dot{\ell}_{\eta_0}(y)|  \right) \\
		&\lesssim \sqrt{f_\eta(y)}   \left(s_n \log p\right)^{\frac{1}{a_2}} (\log n)^{\frac{4}{\tau}},
	\end{split}
	\eean
	and
	\bean\label{ddot_feta}
	\begin{split}
		&| \ddot{f}_\eta(y)| \\
		&\lesssim \eta_0(y) \bigg\{ \left(\dot{\ell}_\eta(y) - \dot{\ell}_{\eta_0}(y)  \right)^2 + | \ddot{\ell}_{\eta}(y) - \ddot{\ell}_{\eta_0}(y) | \cdot |\ell_\eta(y) - \ell_{\eta_0}(y) | \\
		&+ |\ell_\eta(y) - \ell_{\eta_0}(y) |\cdot |\dot{\ell}_\eta(y) - \dot{\ell}_{\eta_0}(y)| \cdot | \dot{\ell}_\eta(y) | + \left(\ell_\eta(y) -\ell_{\eta_0}(y) \right)^2 |\ddot{\ell}_{\eta_0}(y)|  \bigg\}  \quad\quad \\
		&\lesssim  \left(s_n \log p\right)^{\frac{3}{a_2}} (\log n)^{\frac{8}{\tau}}
	\end{split}
	\eean
	for any $\eta\in \calH_n^*$ and $y\in\bbR$.
	By the Taylor expansion,
	\bea
	&&| f_\eta(y+x) - f_\eta(y)| \\
	&\lesssim& |x|  \sqrt{f_\eta(y)} \left(s_n\log p \right)^{\frac{1}{a_2}} (\log n)^{\frac{4}{\tau}} +  x^2 \left(s_n\log p \right)^{\frac{3}{a_2}} (\log n)^{\frac{8}{\tau}} \\
	&\lesssim& \left(s_n\log p\right)^{\frac{1}{a_2}} [\log n]^{\frac{4}{\tau}} \left\{ |x|\sqrt{f_\eta(y)}  + x^2 \left(s_n \log p\right)^{\frac{2}{a_2}} (\log n)^{\frac{4}{\tau}} \right\} 
	\eea
	for any $y\in\bbR$ and small $|x|$.
	If we take $|x| \le C\left(s_n \log p\right)^{-\frac{3}{2 a_2}}(\log n)^{-\frac{4}{\tau}} \sqrt{f_\eta(y)}$ for some small constant $C>0$, it implies $| f_\eta(y+x) - f_\eta(y)|  \le f_\eta(y)/2$ for any $y\in \bbR$ and small $|x|$.
	Therefore, for any fixed $y_0 \in A$, we have $f_\eta(y_0 +x) > f_\eta(y_0)/2$ for any $|x| \le C \left(s_n \log p\right)^{-\frac{3}{2a_2}} (\log n)^{-\frac{4}{\tau}} \sqrt{f_\eta(y_0)}$ for some small constant $C>0$. 
	Then,
	\bean\label{Bset_lb}
	\int_B f_\eta(y) dy 
	&\ge& \int_{|y-y_0| \le C \left(s_n\log p\right)^{-\frac{3}{2a_2}} [\log n]^{-\frac{4}{\tau}} \sqrt{f_\eta(y_0)}} f_\eta(y) dy  \nonumber\\
	&\gtrsim& \left(s_n\log p\right)^{-\frac{3}{2a_2}} (\log n)^{-\frac{4}{\tau}} \left(f_\eta(y_0)\right)^{\frac{3}{2}} 
	\eean
	for any $y_0 \in A$ and $\eta\in\calH_n^*$.
	On the other hand,  
	$$1/\eta(y) \lesssim (\log n )^{\frac{1}{2}} \exp \{ 2(s_n \log p)^{\frac{1}{a_2}} (\log n)^{\frac{4}{\tau}} \}$$ 
	for any $y\in B$ and $\eta\in\calH_n^*$, which implies
	\bea
	\int_B \Big\{ \frac{\eta_0(y)}{\eta(y)} \Big\}^{\delta} \eta_0(y) dy 
	&\lesssim& \int_B \eta_0(y)^{1+\delta} (\log n)^{\frac{\delta}{2}} \exp \{ 2\delta (s_n \log p)^{\frac{1}{a_2}} (\log n)^{\frac{4}{\tau}} \} dy \\
	&\lesssim& 1
	\eea
	by taking $\delta = (s_n \log p)^{-\frac{1}{a_2}} (\log n )^{-\frac{4}{\tau}}$.
	Thus, by Lemma \ref{lemma:intpq_ub}, we have
	\bean\label{Bfeta_ub}
	\int_B f_\eta(y) dy 
	&\lesssim& \delta_n^2 \left(s_n\log p\right)^{\frac{2}{a_2}} [\log n]^{\frac{12}{\tau}}
	\eean
	for any $\eta\in\calH_n^*$. 
	By combining \eqref{Bset_lb} and \eqref{Bfeta_ub}, it implies that
	\bean\label{first_up}
	f_\eta(y_0) &\lesssim& \delta_n^{\frac{4}{3}} \left(s_n\log p\right)^{\frac{7}{3a_2}} [\log n]^{\frac{32}{3\tau}}
	\eean
	for any $y_0\in A$ and $\eta\in \calH_n^*$.

	Next, we claim that if $f_\eta(y) \lesssim \delta_n^{d_1} \left(s_n\log p \right)^{d_2} [\log n]^{d_3}$ for some $d_1,d_2$ and $d_3>0$, then we have $f_\eta(y) \lesssim \delta_n^{1+\frac{3}{8}d_1 -\zeta } \left(s_n \log p\right)^{\frac{3}{8}d_2 +\frac{3}{2a_2} } [\log n]^{\frac{3}{8}d_3 + \frac{7}{\tau}}$ for any $y \in A$ and $\eta \in \calH_n^*$.	
	Suppose that $f_\eta(y) \lesssim \delta_n^{d_1} \left(s_n\log p \right)^{d_2} [\log n]^{d_3}$ on $y\in A$ and $\eta\in \calH_n^*$ for some positive constants $d_1,d_2$ and $d_3$. 
	Due to \eqref{first_up}, there exist constants $d_1=4/3, d_2=7/(3a_2)$ and $d_3=32/(3\tau)$ satisfying $f_\eta(y) \lesssim \delta_n^{d_1} \left(s_n\log p \right)^{d_2} [\log n]^{d_3}$.
	Note that for any small constant $\zeta>0$,
	\begin{align}\label{dotell_eta_eta0}
	\begin{split}
	&| \dot{\ell}_\eta(y) - \dot{\ell}_{\eta_0}(y) | \sqrt{\eta_0(y)} \\
	&\lesssim |x| \left( |\ddot{\ell}_\eta(y_1) | + | \ddot{\ell}_{\eta_0}(y_2)|  \right)\sqrt{\eta_0(y)} \\
	&+ \frac{|\ell_\eta(y+x) - \ell_{\eta_0}(y+x)| + |\ell_\eta(y) - \ell_{\eta_0}(y)|}{|x|}  \sqrt{\eta_0(y)} \\
	\end{split} \\
	&\lesssim |x| \left(s_n\log p\right)^{\frac{2}{a_2}} [\log n]^{\frac{4}{\tau}} 
	+ \frac{e^{\frac{b'}{2}|y|^{\tau'}}}{|x|}\cdot  \delta_n^{\frac{d_1}{2}} \left(s_n\log p\right)^{\frac{d_2}{2}} [\log n]^{\frac{d_3}{2}} \nonumber  \\
	&\lesssim |x| \left(s_n\log p\right)^{\frac{2}{a_2}} [\log n]^{\frac{4}{\tau}} 
	+ \frac{1}{|x|} \delta_n^{\frac{d_1}{2} - 4\zeta} \left(s_n\log p\right)^{\frac{d_2}{2}} [\log n]^{\frac{d_3}{2}} \nonumber
	\end{align}
	for some $|y-y_1| \vee |y-y_2| \le |x|$, thus
	\bean
	| \dot{\ell}_\eta(y) - \dot{\ell}_{\eta_0}(y) | \sqrt{\eta_0(y)} 
	&\lesssim& \delta_n^{\frac{d_1}{4} - 2\zeta} \left(s_n\log p\right)^{\frac{d_2}{4} +\frac{1}{a_2} } [\log n]^{\frac{d_3}{4} + \frac{2}{\tau}} \label{dotell_eta_eta0_rate}
	\eean
	on $y \in A$ and $\eta\in \calH_n^*$, by taking $|x|= \delta_n^{\frac{d_1}{4} - 2\zeta} \left(s_n\log p\right)^{\frac{d_2}{4}-\frac{1}{a_2} } [\log n]^{\frac{d_3}{4} -\frac{2}{\tau} }$. 
	Then, by \eqref{dot_feta}, 
	\bea
	| \dot{f}_\eta(y) | 
	&\lesssim& \delta_n^{\frac{3}{4}d_1 -2\zeta } \left(s_n \log p\right)^{\frac{3}{4}d_2 +\frac{1}{a_2} } [\log n]^{\frac{3}{4}d_3 + \frac{2}{\tau}}
	\eea
	for any $y\in A$ and $\eta\in \calH_n^*$, which implies that 
	\bea
	f_\eta(y+x) &\ge& \frac{1}{2}f_\eta(y) 
	\eea
	for any $y\in A, \eta\in \calH_n^*$, $|x|\le C_3\delta_n^{-\frac{3}{4}d_1 +2\zeta } \left(s_n \log p \right)^{-\frac{3}{4}d_2 -\frac{1}{a_2} } [\log n]^{-\frac{3}{4}d_3 - \frac{2}{\tau}} f_\eta(y)$ and for some small constant $C_3>0$, by the first-order Taylor expansion.
	Thus, similar to \eqref{Bset_lb}, 
	\bea
	\int_B f_\eta(y) dy 
	&\gtrsim& \left(f_\eta(y_0) \right)^2 \delta_n^{-\frac{3}{4}d_1 +2\zeta } \left(s_n \log p\right)^{-\frac{3}{4}d_2 -\frac{1}{a_2} } [\log n]^{-\frac{3}{4}d_3 - \frac{2}{\tau}},
	\eea
	for any $y_0\in A, \eta\in \calH_n^*$ and small $\zeta>0$.
	Again by \eqref{Bfeta_ub}, 
	\bean\label{new_feta_ub}
	f_\eta(y) 
	&\lesssim& \delta_n^{1+\frac{3}{8}d_1 -\zeta } \left(s_n \log p\right)^{\frac{3}{8}d_2 +\frac{3}{2a_2} } [\log n]^{\frac{3}{8}d_3 + \frac{7}{\tau}},
	\eean
	for any $y\in A, \eta\in \calH_n^*$ and small $\zeta>0$.
	
	Note that  the upper bound \eqref{new_feta_ub} is obtained from the assumption $\sup_{\eta\in \calH_n^*}$ $f_\eta(y) \lesssim \delta_n^{d_1} \left(s_n \log p \right)^{d_2} [\log n]^{d_3}$. 
	Thus, by applying the claim repeatedly, one can check that $\sup_{\eta\in \calH_n^*} f_\eta(y) \lesssim \delta_n^{\frac{8}{5} -2\zeta} \left(s_n\log p \right)^{\frac{12}{5a_2} } [\log n]^{\frac{56}{5\tau}} $ for any $y\in A$ and a given small constant $\zeta>0$.
	
	Therefore, we finally obtain the following upper bound
	\bea
	&& \int_{A} \sup_{\eta\in \calH_n^*} \left(\dot{\ell}_\eta(y) - \dot{\ell}_{\eta_0}(y)  \right)^2 dP_{\eta_0} (y) \\
	&\lesssim& x^2 \cdot \left( s_n \log p \right)^{\frac{4}{a_2}} [\log n]^{\frac{8}{\tau}}  \\
	&&+ \frac{1}{x^2}  \int_A \sup_{\eta\in \calH_n^*} \left( \ell_\eta(y+x) - \ell_{\eta_0}(y+x) \right)^2 \eta_0(y+x) \cdot e^{b'|y|^{\tau'}} dy  \\
	&\lesssim& x^2 \cdot \left( s_n \log p \right)^{\frac{4}{a_2}} [\log n]^{\frac{8}{\tau}} 
	+ \frac{ \delta_n^{\frac{8}{5} -2\zeta} \left(s_n\log p \right)^{\frac{12}{5a_2} } [\log n]^{\frac{56}{5\tau}}  }{x^2} 
	\eea
	by \eqref{ellelldeta}. By taking $|x| = \delta_n^{\frac{2}{5} -\frac{\zeta}{2}} \left(s_n\log p \right)^{-\frac{2}{5a_2} } [\log n]^{\frac{4}{5\tau}}$, 
	\bea
	\int_{A} \sup_{\eta\in \calH_n^*} \left(\dot{\ell}_\eta(y) - \dot{\ell}_{\eta_0}(y)  \right)^2 dP_{\eta_0} (y) 
	&\le& K_\zeta \delta_n^{\frac{4}{5} -\zeta} \left(s_n\log p \right)^{\frac{16}{5a_2} } [\log n]^{\frac{48}{5\tau}}\\ 
	&\le& K_\zeta \epsilon_n^{\frac{4}{5} -2\zeta} \left(s_n\log p \right)^{\frac{16}{5a_2} }
	\eea
	for some constant $K_\zeta>0$ not depending on $(n,p)$.  \hfill $\blacksquare$
\end{proof}

\bigskip\noindent
This lemma gives slightly faster convergence rate, under stronger condition,  compared with Lemma \ref{lemma:score}, and is used to prove the misspecified LAN (Lemma \ref{thm:misLAN}).
Although Lemma \ref{lemma:score2} seems similar to Lemma \ref{lemma:score}, we stated them separately to avoid assuming redundant conditions for Lemma \ref{lemma:score}.

\begin{lemma}\label{lemma:score2}
	Let $\epsilon_n = K_{\rm eta}\sqrt{s_n \log p /n}$. For any constant $\zeta>0$, there exists a constant $K_\zeta>0$ not depending on $(n,p)$ such that
	\bea
	\int \sup_{\eta\in \calH_n^*} \left(\dot{\ell}_\eta(y) - \dot{\ell}_{\eta_0}(y) \right)^2 dP_{\eta_0} (y)
	&\le& K_\zeta \left(\epsilon_n \right)^{\frac{4}{5}-\zeta} 
	\eea
	for any $\eta_0$ satisfying \hyperref[D1]{(D1)}-\hyperref[D5]{(D5)} and all sufficiently large $n$, provided that $\left(s_n \log p \right)^{1+\frac{15}{a_2}} = o(n^{1-\zeta})$, where $\calH_n^*$ defined at \eqref{H_n_star}.
\end{lemma}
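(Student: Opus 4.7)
My plan is to deduce the bound directly from Lemma~\ref{lemma:score} by absorbing the extra factor $(s_n\log p)^{16/(5a_2)}$ into the rate under the stronger hypothesis. The structure of the argument will be identical to that of Lemma~\ref{lemma:score}; only the final bookkeeping step changes.

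Concretely, I would first check that the hypothesis $(s_n\log p)^2 = o(n)$ used in Lemma~\ref{lemma:score} is implied by $(s_n\log p)^{1+15/a_2} = o(n^{1-\zeta})$, and then apply Lemma~\ref{lemma:score} with a smaller parameter $\zeta_0 \in (0,\zeta)$ to obtain
\[
\int \sup_{\eta \in \calH_n^*} \bigl(\dot\ell_\eta(y) - \dot\ell_{\eta_0}(y)\bigr)^2 \, dP_{\eta_0}(y) \le K_{\zeta_0}\,\epsilon_n^{4/5 - \zeta_0}\,(s_n\log p)^{16/(5a_2)}.
\]
The target bound will then follow once I verify $(s_n\log p)^{16/(5a_2)} \lesssim \epsilon_n^{-(\zeta-\zeta_0)}$. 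Substituting $\epsilon_n^2 = K_{\rm eta}^2 s_n\log p/n$, this reduces to showing $(s_n\log p)^{16/(5a_2)+(\zeta-\zeta_0)/2} = O\bigl(n^{(\zeta-\zeta_0)/2}\bigr)$, which is precisely where the stronger hypothesis enters. A short manipulation using $s_n\log p \le n^{(1-\zeta)/(1+15/a_2)}$ gives the admissibility criterion $\zeta - \zeta_0 \ge 32(1-\zeta)/\bigl(5(a_2\zeta + 15)\bigr)$, so I would pick $\zeta_0$ just below this critical value to conclude.

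The main obstacle will be the algebraic calibration of $\zeta_0$ relative to $\zeta$ to ensure the resulting gap is strictly positive and the argument carries through cleanly. The exponent $1+15/a_2$ appearing in the hypothesis has been tuned precisely to match the $(s_n\log p)^{16/(5a_2)}$ factor that Lemma~\ref{lemma:score} unavoidably produces, so the absorption is essentially tight. Beyond this calibration, no new analytical ingredient is required: the detailed pointwise bounds on $\ddot\ell_\eta$ and $\dddot\ell_\eta$ and the bootstrap on $f_\eta$ can be reused verbatim from the proof of Lemma~\ref{lemma:score}.
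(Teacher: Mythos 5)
There is a genuine gap in your proposal: the absorption step cannot work for all $\zeta>0$, and the lemma is required to hold for \emph{every} constant $\zeta>0$. To apply Lemma~\ref{lemma:score} you must take $\zeta_0>0$, so you need $\zeta-\zeta_0<\zeta$. But your own admissibility criterion forces
\[
\zeta-\zeta_0 \;\ge\; \frac{32(1-\zeta)}{5(a_2\zeta+15)},
\]
which is bounded below by roughly $32/75$ as $\zeta\downarrow 0$ (and, with the paper's choice $a_2=3$, only admits a solution with $\zeta_0>0$ once $\zeta\gtrsim 0.29$). So the absorption fails precisely in the regime of small $\zeta$, which is the only regime that matters since smaller $\zeta$ gives a stronger conclusion under a weaker hypothesis. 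More bluntly: absorbing $(s_n\log p)^{16/(5a_2)}$ into $\epsilon_n^{-\delta}$ requires $(s_n\log p)^{16/(5a_2)+\delta/2}\lesssim n^{\delta/2}$, which cannot hold as $\delta\to 0$ because the left side does not tend to a constant. Your remark that the exponent $1+15/a_2$ was ``tuned precisely'' to make this absorption tight is not accurate; the factor $(s_n\log p)^{16/(5a_2)}$ in Lemma~\ref{lemma:score} is simply too large to be swallowed this way.

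The paper's actual proof does not use Lemma~\ref{lemma:score} as a black box. Instead, it introduces $\varphi_n=\epsilon_n^{4/5-\zeta}(s_n\log p)^{6/(5a_2)}[\log n]^{4/\tau}$ and restricts the integral to the set $A'=\{y\in A:\eta_0(y)\gtrsim\varphi_n^2\}$ (and similarly $B'$). On $B'$, the ratio $\eta_0/\eta$ is uniformly bounded, so the application of Lemma~\ref{lemma:intpq_ub} yields $\int_{B'}f_\eta\lesssim\delta_n^2$ \emph{without} the extra $(s_n\log p)^{2/a_2}[\log n]^{12/\tau}$ factor that drives the loss in Lemma~\ref{lemma:score}. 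It then runs a fresh bootstrap on the exponent $K$ in the bound $\sup_\eta|\ddot\ell_\eta-\ddot\ell_{\eta_0}|\sqrt{\eta_0}\lesssim(s_n\log p)^K$, showing each pass multiplies $K$ by $3/10$, so after finitely many iterations $K$ can be taken below any prescribed $\zeta'>0$. This is the analytical ingredient your proposal is missing; reusing the pointwise bounds from Lemma~\ref{lemma:score} verbatim does not by itself produce a bound free of the $(s_n\log p)$ factor.
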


\begin{proof}	

	Assume that a small $\zeta>0$ is given. Let $\varphi_n := \epsilon_n^{\frac{4}{5} -\zeta} \left(s_n\log p\right)^{\frac{6}{5a_2} }[\log n]^{\frac{4}{\tau}}$, $A' := \{y\in A: \eta_0(y) \gtrsim \varphi_n^2  \}$ and $B' := \{y\in B: \eta_0(y) \gtrsim \varphi_n^2  \}$, where $A$ and $B$ are defined in Lemma \ref{lemma:score}.
	Note that
	\bea
	\int_{{(A')}^c} \sup_{\eta\in\calH_n^*} \left(\dot{\ell}_\eta(y) \right)^2 dP_{\eta_0}(y)
	&\lesssim& 
	\int_{A^c} \sup_{\eta\in\calH_n^*} \left(\dot{\ell}_\eta(y) \right)^2 dP_{\eta_0}(y) \\
	&+& \int_{A\cap \{y:\, \eta_0(y)\lesssim \varphi_n^2 \}  } \sup_{\eta\in\calH_n^*} \left(\dot{\ell}_\eta(y) \right)^2 dP_{\eta_0}(y) \\
	&\lesssim& \epsilon_n + \varphi_n^2 \left(s_n\log p \right)^{\frac{2}{a_2} } [\log n ]^{\frac{4}{\tau}} \cdot \int_A  (y^2 +1) dy \\
	&\lesssim& \epsilon_n + \varphi_n^2 \left(s_n\log p \right)^{\frac{2}{a_2} } [\log n ]^{\frac{7}{\tau}} \\
	&\lesssim& \epsilon_n^{\frac{4}{5} -\zeta },
	\eea
	provided that $\left(s_n \log p \right)^{1+\frac{11}{a_2}} = o(n)$. Similarly, it is easy to check that $$\int_{{(A')}^c} \sup_{\eta\in\calH_n^*} \left(\dot{\ell}_{\eta_0}(y) \right)^2 dP_{\eta_0}(y) \lesssim \epsilon_n^{4/5 -\zeta}.$$
	Hence, it suffices to show that 
	\bea
	\int_{A'} \sup_{\eta\in \calH_n^*} \left(\dot{\ell}_\eta(y) - \dot{\ell}_{\eta_0}(y) \right)^2 dP_{\eta_0}(y) 
	&\le& K_\zeta \left(\epsilon_n \right)^{\frac{4}{5}-\zeta}
	\eea
	for some positive constants $\zeta$ and $K_\zeta$.
	
	Note that similar to \eqref{dotell_eta_eta0},
	\bean
	&& |\ddot{\ell}_\eta(y) - \ddot{\ell}_{\eta_0}(y)| \sqrt{\eta_0(y)} \nonumber \\
	&\lesssim& |x| \left\{|\dddot{\ell}_\eta(y_1)|+ |\dddot{\ell}_{\eta_0}(y_2)|  \right\} \sqrt{\eta_0(y)} 
	+ \frac{e^{\frac{b'}{2}|y|^{\tau'} }}{|x|} |\dot{\ell}_\eta(y) - \dot{\ell}_{\eta_0}(y)| \sqrt{\eta_0(y)} \quad\,\,\,\,\label{ddotell_eta_eta0} \\
	&\lesssim& |x| \left(s_n \log p\right)^{\frac{3}{a_2} } [\log n ]^{\frac{6}{\tau}} + \frac{1}{|x|} \delta_n^{\frac{2}{5}-\zeta }\left(s_n\log p\right)^{\frac{8}{5a_2} } [\log n ]^{\frac{4}{\tau}} \nonumber
	\eean
	for some $|y-y_1| \vee |y-y_2| \le |x|$ on $y\in A'$ and $\eta\in \calH_n^*$ by \eqref{dotell_eta_eta0_rate}. Then, by taking appropriate $|x|$, we have
	\bean
	|\ddot{\ell}_\eta(y) - \ddot{\ell}_{\eta_0}(y)| \sqrt{\eta_0(y)}
	&\lesssim& \delta_n^{\frac{1}{5}-\zeta }\left(s_n\log p\right)^{\frac{23}{10a_2} } [\log n ]^{\frac{5}{\tau}} \label{ddotell_diff_B} \\
	&\lesssim& \left(s_n\log p\right)^{\frac{4}{5a_2} } \nonumber
	\eean
	on $y\in A'$ and $\eta\in \calH_n^*$, because we assume that $\left(s_n \log p \right)^{1+\frac{15}{a_2}} = o(n^{1-\zeta})$. 
	Suppose that $\sup_{\eta\in \calH_n^*}|\ddot{\ell}_\eta(y) - \ddot{\ell_0}_\eta(y)|\sqrt{\eta_0(y)} \lesssim \left(s_n\log p \right)^{K}$ and $\sup_{\eta\in \calH_n^*} f_\eta(y) \lesssim \delta_n^{d_1} \left(s_n\log p \right)^{d_2} [\log n]^{d_3}$ on $y\in B'$ for some positive constants $K, d_1,d_2$ and $d_3$. 
	Note that from the proof of Lemma \ref{lemma:score} and the definition of $B'$,
	\bea
	\frac{\eta_0(y)}{\eta(y)} 
	&\lesssim& \exp \left( \frac{\varphi_n}{\sqrt{\eta_0(y)}}  \right) \,\,\lesssim\,\, 1
	\eea
	for any $y\in B'$ and $\eta\in \calH_n^*$, then, similar to \eqref{Bfeta_ub}, it is easy to show that
	\bean\label{Bfeta_ub2}
	\int_{B'} f_\eta(y) dy 
	&\lesssim& \delta_n^2,
	\eean
	by Lemma \ref{lemma:intpq_ub}.
	Applying \eqref{dotell_eta_eta0}, 
	\bean\label{dotell_eta_eta0_rate2}
	|\dot{\ell}_\eta(y) - \dot{\ell}_{\eta_0}(y)| \sqrt{\eta_0(y)}
	&\lesssim& \delta_n^{\frac{d_1}{4} - \zeta} \left(s_n\log p \right)^{\frac{d_2}{4} + \frac{K}{2}} [\log n]^{\frac{d_3}{4}}
	\eean
	for any $y \in A'$ and $\eta\in \calH_n^*$.
	Then by \eqref{Bfeta_ub2} and the similar arguments to the proof of Lemma \ref{lemma:score}, we have
	\bea
	f_\eta(y) 
	&\lesssim& \delta_n^{1+ \frac{3}{8}d_1 - \zeta} \left(s_n\log p \right)^{\frac{3}{8}d_2  + \frac{K}{4}} [\log n]^{\frac{3}{8}d_3} 
	\eea
	for any $y \in A'$ and $\eta\in \calH_n^*$.
	By a recursion, one can check that $d_1,d_2$ and $d_3$ converge to $8/5-\zeta, 2K/5$ and $0$, respectively.
	Thus, by \eqref{dotell_eta_eta0_rate2}, we have
	\bean\label{Krate}
	|\dot{\ell}_\eta(y) - \dot{\ell}_{\eta_0}(y)| \sqrt{\eta_0(y)}
	&\lesssim& \delta_n^{\frac{2}{5} -\zeta} \left(s_n\log p \right)^{\frac{3}{5}K}
	\eean
	for any $y \in A'$ and $\eta\in \calH_n^*$, and it implies that 
	\bea
	|\ddot{\ell}_\eta(y) - \ddot{\ell_0}_\eta(y)|\sqrt{\eta_0(y)} 
	&\lesssim& \delta_n^{\frac{1}{5}-\zeta }\left(s_n\log p \right)^{\frac{3}{2a_2} + \frac{3}{10}K } [\log n ]^{\frac{5}{\tau}} \\
	&\lesssim& \left(s_n\log p \right)^{\frac{3}{10}K }
	\eea
	for any $y\in A'$ and $\eta\in \calH_n^*$ by \eqref{ddotell_eta_eta0}.
	Thus, we obtain $\sup_{\eta\in \calH_n^*}|\ddot{\ell}_\eta(y) - \ddot{\ell_0}_\eta(y)|\sqrt{\eta_0(y)} \lesssim \left(s_n\log p \right)^{\frac{3}{10}K }$ from the assumption $\sup_{\eta\in \calH_n^*}|\ddot{\ell}_\eta(y) - \ddot{\ell_0}_\eta(y)|$ $\times\sqrt{\eta_0(y)}  \lesssim \left(s_n \log p \right)^{K}$ on $y\in A'$. Suppose that a small constant $\zeta'>0$ is given, then we have $\sup_{\eta\in \calH_n^*} |\ddot{\ell}_\eta(y)| \lesssim \left(s_n \log p \right)^{\zeta'}$ on $y\in B'$ by repeatedly applying the above arguments. 
	Finally, by \eqref{Krate}, 
	\bea
	\left( \dot{\ell}_\eta(y) - \dot{\ell}_{\eta_0}(y)\right)^2 \eta_0(y)
	&\lesssim& \delta_n^{\frac{4}{5} -\zeta}
	\eea
	for some given constant $\zeta>0$, any $y\in A'$ and $\eta\in \calH_n^*$.
	Therefore,
	\bea
	\int_{A'} \sup_{\eta\in \calH_n^*} \left(\dot{\ell}_\eta(y) - \dot{\ell}_{\eta_0}(y) \right)^2 dP_{\eta_0}(y) 
	&\le&  K_\zeta \left(\epsilon_n \right)^{\frac{4}{5}-\zeta}
	\eea
	for some positive constants $\zeta$ and $K_\zeta$ not depending on $(n,p)$.   \hfill $\blacksquare$
\end{proof}

\begin{lemma}\label{lemma:score_L2_not_uniform}
	If $(s_n \log p)^{1+ \frac{11}{2a_2}} = o(n^{1-\zeta})$ for some constant $\zeta>0$, we have
	\bea
	\sup_{\eta\in \calH_n^*}  \int \left(\dot{\ell}_\eta(y) - \dot{\ell}_{\eta_0}(y) \right)^2 dP_{\eta_0}(y) &=& o(1)
	\eea
	for any $\eta_0$ satisfying  \hyperref[D1]{(D1)}-\hyperref[D5]{(D5)}, where $\calH_n^*$ defined at \eqref{H_n_star}.
\end{lemma}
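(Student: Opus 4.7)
The strategy is to adapt Lemmas 4 (score) and 5 (score2), exploiting the fact that the supremum here sits outside the integral rather than inside. Because pointwise control of the integrand is no longer required, a single-pass analysis replaces the iterative bootstrap used for the uniform versions, which is why the sample-size condition is weaker than in those lemmas.

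Fix $\eta \in \calH_n^*$ and split $\bbR$ into $A = \{|y| \le C_1 (\log(1/\epsilon_n))^{1/\tau}\}$ and $A^c$. On $A^c$, the integrand $(\dot{\ell}_\eta - \dot{\ell}_{\eta_0})^2 \eta_0$ is controlled by the polynomial-in-$|y|$ bounds on $|\dot{\ell}_\eta|$ and $|\dot{\ell}_{\eta_0}|$ already computed in the proof of Lemma 4, combined with the exponential tail (D2) of $\eta_0$; choosing $C_1$ large this contribution is $o(1)$.

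On $A$, introduce the difference quotient $d_\eta(x,y) := x^{-1}[(\ell_\eta - \ell_{\eta_0})(y+x) - (\ell_\eta - \ell_{\eta_0})(y)]$ and decompose $\dot{\ell}_\eta - \dot{\ell}_{\eta_0} = r_\eta(x,\cdot) + d_\eta(x,\cdot)$. By second-order Taylor, $|r_\eta(x,y)| \le |x|(|\ddot{\ell}_\eta(y_1)| + |\ddot{\ell}_{\eta_0}(y_2)|) \lesssim |x|(s_n\log p)^{2/a_2}(\log n)^{4/\tau}$ on $A$, and $(x d_\eta(x,y))^2 \le 2[(\ell_\eta - \ell_{\eta_0})^2(y+x) + (\ell_\eta - \ell_{\eta_0})^2(y)]$. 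Integrating against $\eta_0$, changing variables in the shifted term and invoking (D5) to dominate $\eta_0(y-x)/\eta_0(y)$ by $C_{\eta_0}e^{b'|y|^{\tau'}}$, where $\tau' < \tau$ forces $e^{b'|y|^{\tau'}} = O(\epsilon_n^{-\zeta'})$ on $A$ for any $\zeta' > 0$, one obtains
$$\int_A (\dot{\ell}_\eta - \dot{\ell}_{\eta_0})^2 \eta_0 \,\lesssim\, x^2 (s_n\log p)^{4/a_2}(\log n)^{8/\tau} \,+\, \frac{\epsilon_n^{-\zeta'}}{x^2}\int (\ell_\eta - \ell_{\eta_0})^2 \eta_0.$$
The remaining $L^2$ integral is bounded using Lemma 7 (intpq\_ub) with $\delta \asymp [(\log(1/\epsilon_n))^{2/\tau}(s_n\log p)^{1/a_2}]^{-1}$, the choice that keeps $M_\delta = O(1)$ because $|\log\eta(y)| \lesssim (y^2 + (\log n)^{4/\tau})(s_n\log p)^{1/a_2}$ on $\calH_n^*$; this yields $\int(\ell_\eta - \ell_{\eta_0})^2 \eta_0 \lesssim \epsilon_n^{2-\zeta'}(s_n\log p)^{2/a_2}$. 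Optimizing over $x$ then gives a bound of order $\epsilon_n^{1-\zeta'}(s_n\log p)^{3/a_2}$ up to sub-polynomial factors, and plugging in $\epsilon_n^2 = K_{\rm eta}^2 s_n\log p/n$ makes the right-hand side $o(1)$.

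The main obstacle is the careful bookkeeping of the polynomial-in-$(s_n\log p)$ factors accumulating from (i) the $\sigma^{-2} \lesssim (s_n\log p)^{1/a_2}$ sup bounds on the higher derivatives of $\ell_\eta$ that enter the Taylor remainder, (ii) the $\delta$–$M_\delta$ trade-off in Lemma 7, and (iii) the sub-polynomial shift penalty from (D5). The exact exponent $1 + 11/(2a_2)$ in the hypothesis arises by replacing, in a single refinement step, the crude sup bound on $|\ddot{\ell}_\eta|$ by the sharper $L^2$ estimate just derived, which trims a factor of $(s_n\log p)^{1/(2a_2)}$ off the pessimistic single-pass rate; no full bootstrap (as in Lemmas 4 and 5) is needed since only the $o(1)$ conclusion is sought.
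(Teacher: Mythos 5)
The paper's proof is quite different and shorter: it integrates by parts, writing
$\int(\dot\ell_\eta-\dot\ell_{\eta_0})^2\,dP_{\eta_0} = -\int(\ell_\eta-\ell_{\eta_0})(\dot\ell_\eta-\dot\ell_{\eta_0})\dot\eta_0 - \int(\ell_\eta-\ell_{\eta_0})(\ddot\ell_\eta-\ddot\ell_{\eta_0})\eta_0$,
and then bounds the two integrals via Cauchy--Schwarz together with Lemma~\ref{lemma:intpq_ub} and the bootstrapped pointwise rates \eqref{dotell_eta_eta0_rate} and \eqref{ddotell_diff_B} established inside the proofs of Lemmas~\ref{lemma:score} and~\ref{lemma:score2}. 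This yields a bound of order $\epsilon_n^{6/5-\zeta}(s_n\log p)^{33/(10a_2)}$, which is exactly where the exponent $1+11/(2a_2)$ in the hypothesis comes from. Your route via the difference quotient is self-contained and structurally sensible, but after optimizing over the step $x$ it produces only $\epsilon_n^{1-\zeta'}(s_n\log p)^{3/a_2}$, which is $o(1)$ only under the strictly stronger requirement $(s_n\log p)^{1+6/a_2}=o(n)$; the lemma's stated hypothesis does not in general imply this.

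You acknowledge the shortfall of $(s_n\log p)^{1/(2a_2)}$ and propose to close it with a single refinement, but the step as described does not go through. The Taylor remainder $r_\eta(x,\cdot)$ is controlled by a \emph{pointwise} (supremum over $|y'-y|\le|x|$) bound on the second-derivative difference $\ddot\ell_\eta-\ddot\ell_{\eta_0}$, whereas what you have ``just derived'' is an $L^2(P_{\eta_0})$ estimate on the \emph{first}-derivative difference $\dot\ell_\eta-\dot\ell_{\eta_0}$: wrong derivative order and wrong norm, so it cannot be substituted into the remainder bound. Converting an $L^2$ rate for $\dot\ell_\eta-\dot\ell_{\eta_0}$ into a pointwise one and then pushing it up to the second derivative is precisely the content of the iterative arguments of Lemmas~\ref{lemma:score} and~\ref{lemma:score2} (culminating in \eqref{ddotell_diff_B}), and the paper's proof of this lemma relies on that output. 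So the claim that no bootstrap is needed is incorrect, and the unexecuted refinement step is a genuine gap in the argument.
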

\begin{proof}
	Note that
	\bea
	\int \left(\dot{\ell}_\eta(y) - \dot{\ell}_{\eta_0}(y) \right)^2 dP_{\eta_0}(y)
	&=& - \int (\ell_{\eta}(y) - \ell_{\eta_0}(y)) (\dot{\ell}_{\eta}(y) - \dot{\ell}_{\eta_0}(y)) \dot{\eta}_0(y) dy \\
	&-& \int (\ell_{\eta}(y) - \ell_{\eta_0}(y)) (\ddot{\ell}_{\eta}(y) - \ddot{\ell}_{\eta_0}(y)) {\eta}_0(y) dy
	\eea
	follows from the integration by parts. 
	By Lemma \ref{lemma:intpq_ub}, \eqref{dotell_eta_eta0_rate} and \eqref{ddotell_diff_B}, one can show that the absolute value of the above equality is bounded above by $\epsilon_n^{\frac{6}{5}-\zeta} (s_n \log p)^{\frac{33}{10a_2}}$ for some constant $\zeta>0$, up to some constant not depending on $\eta$, which implies the desired result.  \hfill $\blacksquare$
\end{proof}

\bigskip\noindent
The following lemma is used to prove Lemma \ref{lemma:Gnfupper}.

\begin{lemma}\label{lemma:Gnfupper}
	Let $s_n$ be a sequence of positive integers. Define
	\bea
	\Theta_{n,1} &:=& \left\{ \theta\in\bbR^p : s_\theta \le s_n, \|\theta-\theta_0\|_1\le 1 \right\}
	\eea
	and $f_{\theta,\bar{\theta},\eta} := (\theta-\theta_0)^T \ddot{\ell}_{\bar{\theta},\eta}(\theta-\theta_0)$. 
	If we assume $\left(s_n \log p \right)^{1+\frac{15}{a_2}} = o(n^{1-\zeta})$ for some constant $\zeta>0$, then for any small constant $\zeta'>0$,
	\bean\label{Gnupper_strong}
	&&\Eaa \left( \sup_{\theta,\bar{\theta}\in \Theta_{n,1} }  \sup_{\eta\in\calH_n^* } \frac{1}{\sqrt{n}} \bigg| \bbG_n  f_{\theta,\bar{\theta},\eta}  \bigg|  \right) \nonumber\\ 
	&\lesssim& \left( \frac{ s_n(\log p)^3 + \left(s_n \log p \right)^{\frac{3}{a_2}} (\log p )^4  }{n} \right)^{\frac{1}{2}} \left(s_n\log p\right)^{\zeta'}   \quad\quad\quad
	\eean
	for any $\eta_0$ satisfying  \hyperref[D1]{(D1)}-\hyperref[D5]{(D5)} and all sufficiently large $n$, where $\calH_n^*$ defined at \eqref{H_n_star}.
\end{lemma}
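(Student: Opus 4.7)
The plan is to apply the maximal inequality of Corollary A.1 of \cite{chae2019bayesian} to the function class $\calF_n=\{f_{\theta,\bar\theta,\eta}: \theta,\bar\theta\in\Theta_{n,1},\,\eta\in\calH_n^*\}$, whose elements factor as
$$f_{\theta,\bar\theta,\eta}(x,y) \,=\, (x^T(\theta-\theta_0))^2\,\ddot\ell_\eta(y - x^T\bar\theta).$$
On $\Theta_{n,1}$, the quadratic factor is bounded by $M^2\log p$ since $\|\theta-\theta_0\|_1\le1$ and $\|x_i\|_\infty\le M\sqrt{\log p}$. Using the pointwise bounds on $|\ddot\ell_\eta|$ and $|\dddot\ell_\eta|$ valid for every $\eta\in\calH_n^*$ that were derived in the opening of the proof of Lemma \ref{lemma:score}, together with the light-tail condition \hyperref[D2]{(D2)} on $\eta_0$, one gets an envelope $F$ with $\|F\|_n$ of order $(\log p)(s_n\log p)^{2/a_2}$ up to polylogarithmic factors in $n$.

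Next, I will bound the bracketing number $N_{[\,]}^n(\epsilon,\calF_n)$ by separating the three sources of variability. The quadratic factor $(x^T(\theta-\theta_0))^2$ ranges over an $s_n$-sparse class in $\theta$; on $\Theta_{n,1}$ an $\epsilon$-net in $\ell_1$ gives $\log N \lesssim s_n(\log p+\log(1/\epsilon))$. The dependence of $\ddot\ell_\eta(y-x^T\bar\theta)$ on $\bar\theta$ is Lipschitz through $|\dddot\ell_\eta|$, which, after bounding $|x^T(\bar\theta-\bar\theta')|\le M\sqrt{\log p}\,\|\bar\theta-\bar\theta'\|_1$, converts to an entropy inflation of order $(s_n\log p)^{3/a_2}$ on top of an $s_n$-sparse cover of $\bar\theta$. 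Finally, the variability in $\eta$ is handled by transferring the sup-norm entropy bound for $\calH_n$ of Lemma 3 of \cite{ghosal2007posterior} into empirical-$L_2$ brackets for $\ddot\ell_\eta$, using truncation at $|y|\lesssim(\log n)^{1/\tau}$ (as in the proof of Lemma \ref{thm:misLAN}) and the tail control from \hyperref[D2]{(D2)} to absorb the unbounded envelope. Combining the three gives a bound of the shape
$$\log N_{[\,]}^n(\epsilon,\calF_n) \,\lesssim\, s_n\log p\,+\,(s_n\log p)^{3/a_2}(\log p)^2\,+\,(s_n\log p)^{\zeta''}\log^2(1/\epsilon).$$

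Inserting this into the entropy integral $\int_0^{\|F\|_n}\sqrt{\log N_{[\,]}^n(\epsilon,\calF_n)}\,d\epsilon$ and using $\int_0^u\log(1/\epsilon)\,d\epsilon\lesssim u^{1-\zeta'}$ (as in the derivation leading to \eqref{Gnelldot_diff_rate}) produces the claimed rate, with all remaining polylog factors absorbed into $(s_n\log p)^{\zeta'}$. The main obstacle will be the careful bookkeeping of the bracketing number for the composed map $\ddot\ell_\eta(y-x^T\bar\theta)$, where both $\eta$ (infinite-dimensional) and $\bar\theta$ (sparse, high-dimensional) vary simultaneously: converting the sup-norm bound on $\calH_n$ into empirical-$L_2$ brackets without losing polynomial control of the envelope requires delicate truncation of large $|y|$, and keeping the $\bar\theta$-Lipschitz inflation at order $(s_n\log p)^{3/a_2}$ rather than a higher power will rely on the hypothesis $(s_n\log p)^{1+15/a_2}=o(n^{1-\zeta})$ appearing in the assumptions.
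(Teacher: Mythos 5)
Your envelope estimate is too crude, and this is not a polylogarithmic slippage but a genuine gap that prevents the claimed rate from coming out. You take $\|F\|_n$ of order $(\log p)(s_n\log p)^{2/a_2}$ by inserting the pointwise bound $|\ddot\ell_\eta(y)|\lesssim(y^2+(\log n)^{4/\tau})(s_n\log p)^{2/a_2}$ into $\Eaa F^2$. The paper does \emph{not} use that crude estimate for the envelope. After rescaling by $(s_n\log p)^{-\zeta'}(\log p)^{-1}$ it shows $\|\widetilde F_n\|_n^2\lesssim(s_n\log p)^{-2\zeta'}$, i.e.\ the un-normalized envelope has $\|F_n\|_n\lesssim\log p$ with no $(s_n\log p)^{2/a_2}$ factor. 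The mechanism is the bootstrap established inside the proof of Lemma~\ref{lemma:score2}: $\sup_{\eta\in\calH_n^*}|\ddot\ell_\eta(y)|\lesssim(s_n\log p)^{\zeta'}$ for $y$ in the set $B'=\{y\in B:\eta_0(y)\gtrsim\varphi_n^2\}$, while the contributions from $A^c$ (light tail of $\eta_0$) and from $A\cap\{\eta_0\lesssim\varphi_n^2\}$ (which is where $(s_n\log p)^{1+15/a_2}=o(n^{1-\zeta})$ actually enters) are shown to be negligible. Replacing $(s_n\log p)^{\zeta'}$ by $(s_n\log p)^{2/a_2}$ inflates the envelope, hence the entropy integral, by $(s_n\log p)^{2/a_2}$. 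For the prior $a_2=3$ used in Theorem~\ref{thm:BvM}, this is a fixed positive power $2/3$ and cannot be swept into $(s_n\log p)^{\zeta'}$ for arbitrarily small $\zeta'$. So your proof cannot produce the stated bound.

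A secondary discrepancy is the treatment of the $\eta$-variability. You propose to transfer the $\|\cdot\|_\infty$ (or Hellinger) entropy of the density class $\calH_n$ from Lemma~3 of \cite{ghosal2007posterior} into $L_2$-brackets for $\ddot\ell_\eta$, in analogy with what the paper does for $\dot\ell_\eta$ in Lemma~\ref{thm:misLAN}. But the map $\eta\mapsto\ddot\ell_\eta$ involves derivatives and division by $\eta$, so such a transfer is not automatic; for the second derivative of the log-density the paper instead applies Theorem~2.7.1 of \cite{vaart1996weak} directly to the truncated class $\{\ddot\ell_\eta I_{[-K_n,K_n]}:\eta\in\calH_n^*\}$, using the Lipschitz envelope $\widetilde H_{K_n}$ from the pointwise bounds on $|\ddot\ell_\eta|$ and $|\dddot\ell_\eta|$. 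That gives an entropy of order $K_n\widetilde H_{K_n}/\epsilon$, i.e.\ the $\epsilon^{-3/2}$ term in the paper's $\log N_{[\,]}^n$, which is absent from your proposed shape $(s_n\log p)^{\zeta''}\log^2(1/\epsilon)$. Finally, the assumption $(s_n\log p)^{1+15/a_2}=o(n^{1-\zeta})$ is used to control the envelope's tail, not to ``keep the $\bar\theta$-Lipschitz inflation at order $(s_n\log p)^{3/a_2}$''; the latter is a purely pointwise consequence of the $|\dddot\ell_\eta|$ bound and requires no growth hypothesis.
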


\begin{proof}
	Without loss of generality, we assume that $\theta_0=0$.
	For a given $\zeta'>0$, define 
	\bean\label{F_nset}
	\widetilde{\calF}_n &:=& \left\{ \widetilde{f}_{\theta,\bar{\theta},\eta} = \left(s_n\log p \right)^{- \zeta' } (\log p)^{-1} \cdot {f}_{\theta,\bar{\theta},\eta}  : \theta, \bar{\theta}\in \Theta_{n,1}, \eta \in \calH_n^* \right\}. \quad\quad
	\eean
	Then for any $\widetilde{f}_{\theta,\bar{\theta},\eta} \in \widetilde{\calF}_n$, 
	\bea
	&&|\widetilde{f}_{\theta,\bar{\theta},\eta}(x,y)| \\
	&\le& \sup_{\theta, \bar{\theta}\in \Theta_{n,1}} \sup_{\eta\in \calH_n^*} (x^T \theta)^2 | \ddot{\ell}_\eta(y-x^T\bar{\theta})| \left(s_n\log p \right)^{- \zeta' } (\log p)^{-1}
	\,\,=:\,\, \widetilde{F}_n(x,y).
	\eea
	$\widetilde{F}_n$ is an envelop function of $\widetilde{\calF}_n$ such that $\Eaa \widetilde{F}_n^2(x_i, Y_i) \lesssim 1$ for any $i=1,\ldots, n$ because
	\bea
	&&\Eaa \widetilde{F}_n^2(x,Y) \\
	&=& \int \sup_{\theta,\bar{\theta}\in \Theta_{n,1}} \sup_{\eta\in \calH_n^*} (x^T \theta)^4 | \ddot{\ell}_\eta(y-x^T\bar{\theta})|^2  \eta_0(y) dy\cdot \left(s_n\log p \right)^{- 2\zeta' } (\log p)^{-2} \\
	&\lesssim& \int_{A'} \sup_{\bar{\theta}\in \Theta_{n,1}} \sup_{\eta\in \calH_n^*} | \ddot{\ell}_\eta(y-x^T\bar{\theta})|^2 \eta_0(y) dy\cdot \left(s_n\log p \right)^{-2\zeta' } \\
	&+& \int_{(A')^c} \sup_{\bar{\theta}\in \Theta_{n,1}} \sup_{\eta\in \calH_n^*} | \ddot{\ell}_\eta(y-x^T\bar{\theta})|^2  \eta_0(y) dy\cdot \left(s_n\log p \right)^{-2\zeta' } \\
	&\lesssim& \left(s_n\log p \right)^{-2\zeta' } + \int_{A^c} \sup_{\bar{\theta}\in \Theta_{n,1}} \sup_{\eta\in \calH_n^*} | \ddot{\ell}_\eta(y-x^T\bar{\theta})|^2  \eta_0(y) dy\cdot \left(s_n \log p \right)^{-2\zeta' } \\
	&+& \int_{A \cap \{y:\eta_0(y) \lesssim \varphi_n^2 \} } \sup_{\bar{\theta}\in \Theta_{n,1}} \sup_{\eta\in \calH_n^*} | \ddot{\ell}_\eta(y-x^T\bar{\theta})|^2  \eta_0(y) dy\cdot \left(s_n\log p \right)^{-2\zeta' }  \\
	&\lesssim& \left(s_n\log p \right)^{-2\zeta' } + \left(s_n \log p \right)^{\frac{4}{a_2} }  \varphi_n^2 \left(s_n\log p \right)^{-2\zeta' } \,\, \lesssim\,\, \left(s_n\log p \right)^{-2\zeta' }
	\eea
	provided that $\left(s_n \log p \right)^{1+\frac{15}{a_2} } =o(n)$, where $A, A'$ and $\varphi_n$ are defined in the proof of Lemma \ref{lemma:score2}.
	Thus, $\| \widetilde{F}_n \|_n^2 = n^{-1} \sum_{i=1}^n \Eaa \widetilde{F}_n^2(x_i,Y_i) \lesssim \left(s_n\log p \right)^{-2\zeta' }$. We will use Corollary A.1 in \cite{chae2019bayesian}, which implies
	\bean\label{Gnfupper}
	&&\Eaa \left( \sup_{\theta,\bar{\theta}\in \Theta_{n,1} }  \sup_{\eta\in\calH_n^* } \frac{1}{\sqrt{n}} \bigg| \bbG_n  f_{\theta,\bar{\theta},\eta}  \bigg|  \right) \nonumber\\
	&\lesssim& \int_0^{\|\widetilde{F}_n\|_n} \sqrt{\log N^n_{[\, ]}(\epsilon, \widetilde{\calF}_n) } d\epsilon \cdot \frac{\left(s_n \log p \right)^{ \zeta' } }{\sqrt{n}} \, \log p . \quad\quad\quad\,\,
	\eean
	
	Now, we calculate $N^n_{[\, ]}(\epsilon, \widetilde{\calF}_n)$ defined at \eqref{Gnfupper}.
	For $\theta^j, \bar{\theta}^j \in \Theta_{n,1}$ and $\eta_j \in \calH_n^*, j=1,2$, write
	\bea
	\widetilde{f}_{\theta^1, \bar{\theta}^1, \eta_1} - \widetilde{f}_{\theta^2, \bar{\theta}^2, \eta_2} &\equiv& \widetilde{f}_1 + \widetilde{f}_2 + \widetilde{f}_3,
	\eea
	where $\widetilde{f}_1 := \widetilde{f}_{\theta^1, \bar{\theta}^1, \eta_1} - \widetilde{f}_{\theta^2, \bar{\theta}^1, \eta_1}, \widetilde{f}_{2} := \widetilde{f}_{\theta^2, \bar{\theta}^1, \eta_1} - \widetilde{f}_{\theta^2, \bar{\theta}^2, \eta_1}$ and $\widetilde{f}_{3} := \widetilde{f}_{\theta^2, \bar{\theta}^2, \eta_1} - \widetilde{f}_{\theta^2, \bar{\theta}^2, \eta_2}$. 
	It is easy to show  $|\widetilde{f}_1(x,y)| \lesssim \|\theta^1- \theta^2\|_1 \cdot \left( y^2  +1\right) \left( s_n \log p \right)^{\frac{2}{a_2}} [\log n]^{\frac{4}{\tau}}$ and  $|\widetilde{f}_2(x,y)| \lesssim \|\bar{\theta}^1 - \bar{\theta}^2 \|_1 \cdot \left( |y|^3 +1 \right) \left( s_n \log p \right)^{\frac{3}{a_2}} [\log n]^{\frac{6}{\tau}} \sqrt{\log p}$.  
	Then, we have
	\bea
	&& \Eaa \left( \sup_{\theta^1, \theta^2} \sup_{\eta_1,\eta_2} |\widetilde{f}_{\theta^1, \bar{\theta}^1, \eta_1}(x,Y) - \widetilde{f}_{\theta^2, \bar{\theta}^2, \eta_2}(x,Y) |^2  \right) \\
	&\lesssim& \sup_{\theta^1,\theta^2} \|\theta^1-\theta^2\|_1^2  \left( s_n \log p \right)^{\frac{6}{a_2}} [\log n]^{\frac{12}{\tau}} \, \log p   + \Eaa \left( \sup_{\theta^1, \theta^2} \sup_{\eta_1,\eta_2} | \widetilde{f}_3(x,Y)|^2  \right).
	\eea 
	To deal with $\widetilde{f}_3$, define
	\bea
	\widetilde{\calG}_{K_n} &:=& \left\{ \ddot{\ell}_\eta \cdot I_{[-K_n,K_n]} : \eta \in \calH_n^*   \right\}
	\eea
	and $\widetilde{H}_{K_n} := \sup_{\eta\in\calH_n^*} \max_{k=0,1} \sup_{|y|\le K_n} | \ddot{\ell}_\eta^{\,(k)}(y) | $ for some $K_n >0$. 
	Then, Theorem 2.7.1 of \cite{vaart1996weak}, which implies for every $\epsilon>0$,
	\bea
	\log N(\epsilon) &:=& \log N(\epsilon, \widetilde{\calG}_{K_n} , \|\cdot\|_\infty) \\
	&\lesssim& K_n \cdot \widetilde{H}_{K_n} \cdot\frac{1}{\epsilon}\\
	&\lesssim& K_n \cdot K_n^3 \left(s_n\log p \right)^{\frac{3}{a_2}}(\log n)^{\frac{6}{\tau}} \,\frac{1}{\epsilon} .
	\eea
	By the definition of the covering number, there is a partition $\{ \calH^l : 1\le l \le N(\epsilon) \}$ of $\calH_n^*$ such that
	\bea
	&& \int_{|y|\le K_n - M\sqrt{\log p} } \sup_{\theta\in \Theta_{n,1} } \sup_{\eta_1,\eta_2 \in \calH^l} | \ddot{\ell}_{\eta_1}(y-x^T \theta) - \ddot{\ell}_{\eta_2}(y-x^T \theta) |^2 dP_{\eta_0}(y)   \\
	&\lesssim& \int_{|y|\le K_n - M\sqrt{\log p} } \epsilon^2 dP_{\eta_0}(y) \,\, \le \,\, \epsilon^2.
	\eea
	Let $K_n= C(\log (1/\epsilon))^{1/\tau} + C(\log n)^{1/\tau} + M\sqrt{\log p}$ for some constant $C>0$, then 
	\bea
	&&\int_{|y|> K_n - M\sqrt{\log p} } \sup_{\theta\in \Theta_{n,1} }\sup_{\eta\in\calH_n^*} | \ddot{\ell}_\eta(y - x^T\theta)|^2 dP_{\eta_0}(y)   \\
	&\lesssim& \int_{|y|>K_n - M\sqrt{\log p} } y^4 e^{-b|y|^\tau} dy\cdot \left(s_n \log p \right)^{\frac{4}{a_2}} [\log n]^{\frac{8}{\tau}}  \\
	&\lesssim& e^{-\frac{b}{4}K_n^\tau}\cdot \left(s_n \log p\right)^{\frac{4}{a_2}} [\log n]^{\frac{8}{\tau}}  \,\,\le\,\, \epsilon^2  .
	\eea
	Thus, we have
	\bea
	\int \sup_{\theta^2,\bar{\theta}^2\in \Theta_{n,1} }\sup_{\eta_1,\eta_2\in \calH^l} | \widetilde{f}_3(x,y) |^2 dP_{\eta_0}(y) 	&\lesssim& \epsilon^2,
	\eea
	for some constant $C>0$ and any $1\le l \le N(\epsilon)$.

	By the above arguments, 
	\bea
	\log N^n_{[\, ]} (\epsilon, \widetilde{\calF}_n ) 
	&\lesssim& \log N(\epsilon) + \log N \left(\epsilon \left(s_n \log p \right)^{-\frac{3}{a_2}} [\log n]^{-\frac{6}{\tau}}[\log p]^{-\frac{1}{2}} , \Theta_n, \|\cdot\|_1 \right) \\
	&\lesssim& K_n^4 \left(s_n\log p\right)^{\frac{3}{a_2}}(\log n)^{\frac{6}{\tau}} \cdot\frac{1}{\epsilon}  + s_n \log p + s_n \log \frac{1}{\epsilon} \\
	&\lesssim& \epsilon^{-\frac{3}{2}} \cdot \left(s_n\log p \right)^{\frac{3}{a_2}}(\log n)^{\frac{6}{\tau}} (\log p)^2 + s_n \log p + s_n \log \frac{1}{\epsilon} .
	\eea
	Hence, by \eqref{Gnfupper}, we get the inequality \eqref{Gnupper_strong}.    \hfill $\blacksquare$
\end{proof}

\bigskip\noindent
The following lemma is used to prove Lemma \ref{thm:misLAN}.

\begin{lemma}[Misspecified LAN: version 1]\label{lemma:LAN}
	Let $s_n$ be a positive integer sequence and $\epsilon_n$ be a sequence such that $\epsilon_n \to 0$. 
	Define $\Theta_{n,\epsilon_n} := \{ \theta\in \Theta : s_\theta\le s_n, \|\theta-\theta_0\|_1\le \epsilon_n \}$ and
	$\tilde{r}_n(\theta,\eta):= L_n(\theta,\eta) - L_n(\theta_0,\eta_0) - \sqrt{n} (\theta-\theta_0)^T \bbG_n \dot{\ell}_{\theta_0,\eta} + n (\theta-\theta_0)^T V_{n,\eta} (\theta-\theta_0)/2$. 
	If we assume that $\left(s_n \log p \right)^{1+\frac{15}{a_2}} = o(n^{1-\zeta})$ for some constant $\zeta>0$, then 
	\bean\label{LAN1_strong}
	&&\Eaa \left( \sup_{\theta\in \Theta_{n,\epsilon_n}} \sup_{\eta\in \calH_n^*} |\tilde{r}_n(\theta,\eta)|  \right) \nonumber\\
	&\lesssim& n \epsilon_n^2 \cdot \rho_n + \epsilon_n \sqrt{\log p} \cdot \sup_{\theta\in\Theta_{n,\epsilon_n}} \| X(\theta-\theta_0)\|_2^2, \quad\quad\quad
	\eean 
	for any $\eta_0$ satisfying  \hyperref[D1]{(D1)}-\hyperref[D5]{(D5)} and all sufficiently large $n$, where $\calH_n^*$ defined at \eqref{H_n_star} and
	\bea
	\rho_n &:=& \left( \frac{ s_n(\log p)^3 + \left(s_n \log p \right)^{\frac{3}{a_2}} (\log p )^4  }{n} \right)^{\frac{1}{2}} \left(s_n\log p\right)^{\zeta'} 
	\eea
	for a given constant $\zeta'>0$.
\end{lemma}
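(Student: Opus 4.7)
The plan is to reduce $\tilde r_n(\theta,\eta)$ to a quadratic remainder with centered integrand and then estimate it by splitting into an empirical-process piece (handled by Lemma \ref{lemma:Gnfupper}) and a Taylor-remainder piece (handled by a pointwise third-derivative bound on $\ell_\eta$). Start with the second-order Taylor expansion of $\ell_\eta(Y_i-x_i^T\theta)$ about $\theta_0$, producing a first-order term $-\sum_i \dot\ell_\eta(Y_i-x_i^T\theta_0)x_i^T(\theta-\theta_0)$ and a Lagrange remainder $\tfrac12\sum_i \ddot\ell_\eta(Y_i-x_i^T\bar\theta_i)(x_i^T(\theta-\theta_0))^2$ at intermediate points $\bar\theta_i$. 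The symmetry condition \hyperref[D4]{(D4)} on $\eta_0$ together with the symmetry of $\eta\in\calH_{\rm mix}$ makes $\dot\ell_\eta$ odd, so $\Eaa[\dot\ell_\eta(Y_i-x_i^T\theta_0)]=0$; consequently the first-order term collapses to exactly $-\sqrt n(\theta-\theta_0)^T \bbG_n\dot\ell_{\theta_0,\eta}$, which is precisely the quantity subtracted in $\tilde r_n$.

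Next, use the integration-by-parts identity $\nu_\eta=\Eaa[\dot\ell_\eta\dot\ell_{\eta_0}]=-\Eaa[\ddot\ell_\eta]$ (valid under the exponential-tail condition \hyperref[D2]{(D2)}, which kills boundary terms) to rewrite
\[
\tfrac{n}{2}(\theta-\theta_0)^T V_{n,\eta}(\theta-\theta_0)=-\tfrac12\sum_i \Eaa[\ddot\ell_\eta(Y_i-x_i^T\theta_0)]\,(x_i^T(\theta-\theta_0))^2.
\]
Combined with the Taylor remainder, this gives
\[
\tilde r_n(\theta,\eta)=\tfrac12\sum_i\bigl\{\ddot\ell_\eta(Y_i-x_i^T\bar\theta_i)-\Eaa[\ddot\ell_\eta(Y_i-x_i^T\theta_0)]\bigr\}(x_i^T(\theta-\theta_0))^2,
\]
which I split as (I)$+$(II), where (I) replaces $\bar\theta_i$ by $\theta_0$ inside the second derivative and (II) is the remaining centered piece.

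For (II), note that with $\bar\theta=\theta_0$ the integrand coincides with $f_{\theta,\theta_0,\eta}$ of Lemma \ref{lemma:Gnfupper} up to centering, so (II)$=\tfrac12\sqrt n\,\bbG_n f_{\theta,\theta_0,\eta}$. Because $f$ is quadratic in $\theta-\theta_0$, rescaling $\theta-\theta_0\mapsto (\theta-\theta_0)/\epsilon_n$ embeds $\Theta_{n,\epsilon_n}$ into $\Theta_{n,1}$ at the cost of a factor $\epsilon_n^2$; Lemma \ref{lemma:Gnfupper} then gives $\Eaa\sup|\text{(II)}|\lesssim n\epsilon_n^2\rho_n$, matching the first term in the claimed bound. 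For (I), apply the mean value theorem once more in the radial direction to obtain $\ddot\ell_\eta(Y_i-x_i^T\bar\theta_i)-\ddot\ell_\eta(Y_i-x_i^T\theta_0)=-\dddot\ell_\eta(\cdot)\,x_i^T(\bar\theta_i-\theta_0)$, bound $|x_i^T(\bar\theta_i-\theta_0)|\le M\sqrt{\log p}\,\epsilon_n$ using the design-matrix assumption and $\|\bar\theta_i-\theta_0\|_1\le\epsilon_n$, and use the uniform third-derivative estimates for $\eta\in\calH_n^*$ derived in the proof of Lemma \ref{lemma:score}. Taking expectation and using the exponentially decaying tail of $\eta_0$ to absorb the moments of $Y_i-x_i^T\theta_0$ gives $\Eaa\sup|\text{(I)}|\lesssim \epsilon_n\sqrt{\log p}\cdot\sup_{\theta\in\Theta_{n,\epsilon_n}}\|X(\theta-\theta_0)\|_2^2$.

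The main obstacle is controlling (II): Lemma \ref{lemma:Gnfupper} is stated for the unit $\ell_1$-ball, and one needs to verify that the rescaling argument preserves the envelope and bracketing estimates for the quadratic functional class; this is what drives the growth condition $(s_n\log p)^{1+15/a_2}=o(n^{1-\zeta})$. A secondary technical point is the integration-by-parts identity $\nu_\eta=-\Eaa[\ddot\ell_\eta]$, for which one must check that the boundary terms vanish uniformly in $\eta\in\calH_n^*$—this follows from the tail estimates on $\eta_0$ in \hyperref[D2]{(D2)} and the pointwise bounds on $\dot\ell_\eta$ used in Lemma \ref{lemma:score}.
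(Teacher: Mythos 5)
Your high-level strategy (Taylor-expand in $t$, identify the first-order term with $\sqrt n(\theta-\theta_0)^T\bbG_n\dot\ell_{\theta_0,\eta}$ via the symmetry-induced zero mean, cancel the Hessian expectation against the $V_{n,\eta}$ term, and control the remainder by an empirical-process bound plus a third-derivative estimate) is the right one and matches the paper. But your decomposition of the quadratic remainder is different from the paper's in a way that creates a real gap.

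The central problem is term (I). You bound
$\ddot\ell_\eta(Y_i-x_i^T\bar\theta_i)-\ddot\ell_\eta(Y_i-x_i^T\theta_0)$
pointwise by $|\dddot\ell_\eta|\cdot|x_i^T(\bar\theta_i-\theta_0)|$ and then ``take expectation.'' But the uniform third-derivative estimate for $\eta\in\calH_n^*$ developed in Lemma~\ref{lemma:score} is
$|\dddot\ell_\eta(y)|\lesssim\{|y|^3+(\log n)^{6/\tau}\}\,(s_n\log p)^{3/a_2}$,
so even after taking expectations over $Y_i$ (which absorbs only the polynomial in $y$, not the deterministic factor) you are left with an extra factor $(s_n\log p)^{3/a_2}(\log n)^{6/\tau}$, not the claimed $\epsilon_n\sqrt{\log p}\cdot\sup_\theta\|X(\theta-\theta_0)\|_2^2$. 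This factor is not $O(1)$ and it ruins the rate needed in Lemma~\ref{thm:misLAN}.

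The paper circumvents this by keeping \emph{only} a deterministic expectation difference in the piece that uses the third derivative. Its decomposition uses the \emph{integral} form of the Taylor remainder, $\int_0^1(1-t)\partial_t^2L_n(\theta(t),\eta)\,dt$, and splits it into $A_{n1}$ (the \emph{centered} process $\bbG_n f_{\theta,\theta(t),\eta}$ with the intermediate point $\theta(t)$ ranging over the full $\ell_1$-ball, handled uniformly by Lemma~\ref{lemma:Gnfupper}), $A_{n2}$ (the \emph{deterministic} quantity $\bbE\ddot\ell_{\theta(t),\eta}-\bbE\ddot\ell_{\theta_0,\eta}$), and $A_{n3}$ (which cancels exactly with the $V_{n,\eta}$ term). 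For $A_{n2}$ the key estimate is $\sup_{\eta\in\calH_n^*}|\bbE_{\theta_0,\eta_0}\dddot\ell_\eta(Y-x^T\cdot)|\lesssim 1$, which is dramatically smaller than the pointwise supremum of $|\dddot\ell_\eta|$. This is obtained by integration by parts: $\bbE\dddot\ell_\eta-\bbE\dddot\ell_{\eta_0}=\int(\dot\ell_\eta-\dot\ell_{\eta_0})\ddot\eta_0$, whose magnitude is controlled by the $L_2(\eta_0)$ closeness of $\dot\ell_\eta$ to $\dot\ell_{\eta_0}$ (Lemma~\ref{lemma:score_L2_not_uniform}) via Cauchy--Schwarz. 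This integration-by-parts + $L_2$-closeness argument is the ingredient your sketch is missing, and it is precisely what makes the \emph{expectation} of $\dddot\ell_\eta$ bounded even though $\dddot\ell_\eta$ itself is large.

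Two secondary issues. First, the Lagrange form of the remainder with random intermediate points $\bar\theta_i$ that depend on $Y_i$ makes it awkward to split (I) into a deterministic piece and a centered piece; the integral form, whose integration variable $t$ is deterministic given $\theta$, avoids this. Second, your (II) only covers the $\bar\theta=\theta_0$ slice of $\bbG_n f_{\theta,\bar\theta,\eta}$, while the centered fluctuations of $\ddot\ell_\eta$ at intermediate points $\theta(t)\neq\theta_0$ are hidden inside (I) and never handled as an empirical process. Lemma~\ref{lemma:Gnfupper} is stated uniformly in $\bar\theta\in\Theta_{n,1}$ precisely so that this piece can be absorbed into $A_{n1}$; if one works only at $\bar\theta=\theta_0$ that uniformity is wasted and the remaining centered fluctuations are unaccounted for.
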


\begin{proof}
	By the Taylor expansion, where $\theta(t):= \theta_0 + t(\theta-\theta_0)$,
	\bea
	L_n(\theta,\eta) &=& L_n(\theta(1),\eta)\\
	&=& L_n(\theta_0,\eta) + \frac{\partial}{\partial t} L_n(\theta(t),\eta) \big|_{t=0} + \int_0^1 \frac{\partial^2}{\partial t^2} L_n(\theta(t),\eta)(1-t)dt .
	\eea
	Since $\bbE_{\theta_0,\eta_0} \dot\ell_{\theta_0, \eta} = 0$ for every $\eta$ by \hyperref[D4]{(D4)}, we have that
	\bea
	\frac{\partial}{\partial t} L_n(\theta(t),\eta) \big|_{t=0} &=& \sqrt{n}(\theta-\theta_0)^T \bbG_n \dot{\ell}_{\theta_0,\eta}
	\eea
	and
	\bea
	\frac{\partial^2}{\partial t^2} L_n(\theta(t),\eta) &=& n(\theta-\theta_0)^T \bbP_n \ddot{\ell}_{\theta(t),\eta}(\theta-\theta_0).
	\eea
	Define 
	\bea
	A_{n1}(\theta,\eta) &:=& n \int_0^1 (1-t) \frac{1}{\sqrt{n}} \bbG_n(\theta-\theta_0)^T \ddot{\ell}_{\theta(t),\eta} (\theta-\theta_0) dt ,\\
	A_{n2}(\theta,\eta) &:=& \int_0^1 (1-t) \\
	&& \sum_{i=1}^n \left[(\theta-\theta_0)^T \bbE_{\theta_0,\eta_0} \left\{ \ddot{\ell}_{\theta(t),\eta}(x_i,Y_i) - \ddot{\ell}_{\theta_0,\eta}(x_i,Y_i) \right\} (\theta-\theta_0)  \right] dt ,\\
	A_{n3}(\theta,\eta) &:=& \frac{1}{2} \sum_{i=1}^n (\theta-\theta_0)^T \bbE_{\theta_0,\eta_0} \ddot{\ell}_{\theta_0,\eta}(x_i,Y_i)(\theta-\theta_0),
	\eea
	then, it is easy to show that
	\bea
	\int_0^1 \frac{\partial^2}{\partial t^2} L_n(\theta(t),\eta) (1-t)dt &=&  A_{n1}(\theta,\eta) +  A_{n2}(\theta,\eta) + A_{n3}(\theta,\eta) .
	\eea
	Since 
	\bea
	\frac{1}{\sqrt{n}} \bbG_n (\theta-\theta_0)^T \ddot{\ell}_{\theta(t),\eta}(\theta-\theta_0) 
	&=& \frac{\|\theta-\theta_0\|_1^2}{\sqrt{n}} \bbG_n \frac{(\theta-\theta_0)^T}{\|\theta-\theta_0\|_1} \ddot{\ell}_{\theta(t),\eta}\frac{(\theta-\theta_0)}{\|\theta-\theta_0\|_1},
	\eea
	we have
	\bea
	\bbE_{\theta_0,\eta_0} \left( \sup_{\theta\in \Theta_{n,\epsilon_n}} \sup_{\eta \in \calH_n^*} \Big|A_{n1}(\theta,\eta) \Big| \right) &\lesssim& n \epsilon_n^2 \cdot \rho_n
	\eea
	by \eqref{Gnupper_strong} in Lemma \ref{lemma:Gnfupper}, provided that $\left(s_n \log p \right)^{1+\frac{15}{a_2}} = o(n^{1-\zeta})$ for some $\zeta>0$. 
	Since $A_{n3}(\theta,\eta) = -n/2 \cdot (\theta-\theta_0)^T V_{n,\eta}(\theta-\theta_0)$, if we only need to show that
	\bea
	\sup_{\theta\in \Theta_{n,\epsilon_n}} \sup_{\eta\in \calH_n^*}  \Big| A_{n2}(\theta,\eta) \Big| 
	&\lesssim& \epsilon_n \sqrt{\log p} \cdot \sup_{\theta\in \Theta_{n,\epsilon_n}} \| X(\theta-\theta_0)\|_2^2,
	\eea 
	where $\theta(t):= \theta_0 + t(\theta-\theta_0)$ for $0\le t \le 1$. 
	To show the above inequality, it suffices to prove that
	\bean
	&&(\theta-\theta_0)^T \left\{ \Eaa \ddot{\ell}_{\theta(t),\eta}(x_i,Y_i) - \Eaa \ddot{\ell}_{\theta_0,\eta}(x_i, Y_i)  \right\} (\theta-\theta_0) \label{Qn2_sum} \\
	&\lesssim& | x_i^T (\theta-\theta_0)^T |^2 \, \sqrt{\log p} \|\theta- \theta_0\|_1 	 \nonumber
	\eean
	for any $i=1,\ldots,n$. Note that \eqref{Qn2_sum} is bounded above by
	\bea
	&& |x_i^T(\theta-\theta_0)|^2  \left| \Eaa \left( \ddot{\ell}_{\eta}(Y_i - x_i^T \theta(t)) - \ddot{\ell}_{\eta}(Y_i - x_i^T \theta_0)  \right)  \right| \\
	&\lesssim& |x_i^T(\theta-\theta_0)|^2 \, \sqrt{\log p} \|\theta-\theta_0\|_1 \cdot \left| \Eaa \dddot{\ell}_\eta(Y_i- x_i^T \theta(t_1))  \right| ,
	\eea 
	for some constant $0\le t_1 \le t$.
	Also note that
	\bea
	&& \left| \Eaa \left( \dddot{\ell}_\eta(Y- x^T \theta(t_1)) -  \dddot{\ell}_{\eta_0}(Y- x^T \theta(t_1)) \right) \right| \\
	&=& \left| \int \left( \dddot{\ell}_\eta(y-x^T\theta(t_1)) - \dddot{\ell}_{\eta_0}(y-x^T\theta(t_1))  \right) \eta_0(y-x^T\theta_0) dy \right| \\
	&=& \left| \int \left( \dot{\ell}_\eta(y-x^T\theta(t_1)) - \dot{\ell}_{\eta_0}(y-x^T\theta(t_1) \right) \ddot{\eta}_0(y-x^T\theta_0) dy  \right|  \\
	&\le& \left[ \int ( \dot{\ell}_\eta(y)- \dot{\ell}_{\eta_0}(y))^2 \eta_0(y) dy \right]^{\frac{1}{2}} \\
	&&\times \,\, \left[ \int \left(\frac{\ddot{\eta_0}(y-x^T\theta_0)}{\eta_0(y-x^T\theta_0)} \right)^2 \frac{\eta_0(y-x^T\theta_0)}{\eta_0(y-x^T\theta(t_1))} \eta_0(y-x^T\theta_0) dy  \right]^{\frac{1}{2}} .
	\eea
	The above equality follows from the integration by parts, and the last inequality follows from the H\"{o}lder's inequality. The last term is of order $O(1)$ by Lemma \ref{lemma:score_L2_not_uniform}.
	Since $ \big| \Eaa \dddot{\ell}_{\eta_0}(Y-x^T \theta(t_1))  \big| \lesssim 1$, it completes the proof for \eqref{LAN1_strong}. \hfill $\blacksquare$
\end{proof}

\bigskip\noindent
Finally, the following lemma is used to prove Lemma \ref{lemma:normal}.

\begin{lemma}\label{lemma:Gnelldot}
	Suppose that $(s_n \log p )^{1+ \frac{8}{a_2}} = o(n^{1-\zeta})$ holds for some constant $\zeta>0$, then
	\bea
	\Eaa \left( \sup_{\eta \in \calH_n^*} \| \bbG_n \dot{\ell}_{\theta_0,\eta}\|_\infty \right) &\lesssim& {\log p}
	\eea
	for any $\eta_0$ satisfying  \hyperref[D1]{(D1)}-\hyperref[D5]{(D5)}, where $\calH_n^*$ defined at \eqref{H_n_star}.
\end{lemma}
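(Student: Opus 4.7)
The plan is to decompose
\bea
\dot{\ell}_{\theta_0,\eta} &=& \dot{\ell}_{\theta_0,\eta_0} + \big(\dot{\ell}_{\theta_0,\eta} - \dot{\ell}_{\theta_0,\eta_0}\big)
\eea
and bound the two resulting pieces separately. The first piece $\|\bbG_n \dot{\ell}_{\theta_0,\eta_0}\|_\infty$ carries the $\log p$ factor, while the $\eta$-dependent remainder is $o(\log p)$ under the stated hypothesis.

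For the first piece, the $j$-th coordinate equals $n^{-1/2}\sum_{i=1}^n\{\dot{\ell}_{\eta_0}(Y_i-x_i^T\theta_0)x_{ij} - \Eaa[\cdot]\}$. Conditions \hyperref[D2]{(D2)} and \hyperref[D5]{(D5)} force $|\dot{\ell}_{\eta_0}(\epsilon_i)| \lesssim |\epsilon_i|^{\gamma_1}+1$ with $\epsilon_i = Y_i-x_i^T\theta_0$ having an exponentially light tail, so each $\dot{\ell}_{\eta_0}(\epsilon_i)$ has finite moments of every order (in fact sub-Weibull of index $\tau/\gamma_1$). Since $|x_{ij}|\le M\sqrt{\log p}$, the summand $\dot{\ell}_{\eta_0}(\epsilon_i)x_{ij}$ has Orlicz $\psi_\alpha$-norm of order $\sqrt{\log p}$ for some $\alpha\in(0,1]$. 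Combining a Bernstein-type concentration (or Lemma 2.2.2 of van der Vaart and Wellner (1996)) with a union bound over the $p$ coordinates then yields $\Eaa \|\bbG_n\dot{\ell}_{\theta_0,\eta_0}\|_\infty \lesssim \sqrt{\log p}\cdot (\log p)^{1/\alpha - 1/2} \cdot \text{polylog}$, and in particular $\lesssim \log p$ (absorbing logarithmic factors into a constant if necessary, or sharpening the Orlicz argument using the specific power-law bound on $\dot{\ell}_{\eta_0}$).

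For the remainder piece, set
\bea
\calL_n &:=& \Big\{ e_j^T\big(\dot{\ell}_{\theta_0,\eta} - \dot{\ell}_{\theta_0,\eta_0}\big) : 1 \le j \le p,\; \eta \in \calH_n^* \Big\}
\eea
with envelope $L_n(x,y) = M\sqrt{\log p}\cdot \sup_{\eta\in\calH_n^*}|\dot{\ell}_\eta(y)-\dot{\ell}_{\eta_0}(y)|$. By Lemma \ref{lemma:score2}, $\|L_n\|_n^2 \lesssim (\log p)\cdot(s_n\log p/n)^{4/5-\zeta}$, which tends to zero under the hypothesis. The bracketing entropy is controlled exactly as in the proof of Lemma \ref{thm:misLAN}: a bracket in $L_2(P_{\eta_0})$ on $\calG_n=\{\dot{\ell}_\eta:\eta\in\calH_n^*\}$ is obtained from a Hellinger bracket on $\calH_n$ via (D5), which in turn follows from a sup-norm cover on $\calH_n$ furnished by Lemma~3 of Ghosal and van der Vaart (2007) together with the tail control on the envelope of $\calH_n$. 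This yields
\bea
\log N_{[\,]}^n(\epsilon,\calL_n) &\lesssim& \log p + (s_n\log p)^{1/(2a_2)}(\log n)^{2/\tau}\Big[(\log(1/\epsilon))^2 + (\log n)^2\Big].
\eea
Applying Corollary A.1 of \cite{chae2019bayesian} then gives
\bea
\Eaa\sup_{\eta\in\calH_n^*}\|\bbG_n(\dot{\ell}_{\theta_0,\eta}-\dot{\ell}_{\theta_0,\eta_0})\|_\infty &\lesssim& \int_0^{\|L_n\|_n}\sqrt{\log N_{[\,]}^n(\epsilon,\calL_n)}\,d\epsilon,
\eea
and a direct calculation using $\int_0^u \log(1/\epsilon)\,d\epsilon\lesssim u^{1-\zeta'}$ shows the integral is $o(1)$, hence certainly $o(\log p)$, once $(s_n\log p)^{1+8/a_2}=o(n^{1-\zeta})$.

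The main obstacle will be the first piece: because we cannot assume $\tau \ge 2\gamma_1$ here (the sub-Gaussian sharpening is reserved for Theorem \ref{thm:selection}), the tails of $\dot{\ell}_{\eta_0}(\epsilon_i)x_{ij}$ are only sub-Weibull rather than sub-exponential, so the standard Bernstein bound has to be replaced by an Orlicz-norm maximal inequality with index $\alpha = \tau/(\tau+\gamma_1)\in(0,1)$. One must then verify that the resulting rate, which is $(\log p)^{1/\alpha}$ after combining with the $\sqrt{\log p}$ boundedness of $x_{ij}$, can be absorbed into $\log p$ (either by being slightly less careful about polylog factors in the target bound, or by exploiting that $\dot{\ell}_{\eta_0}(\epsilon_i)$ is centered and that $\Eaa|\dot{\ell}_{\eta_0}(\epsilon_i)|^k$ grows only polynomially in $k$). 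Everything else is essentially bookkeeping built on top of already-established lemmas.
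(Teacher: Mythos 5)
Your decomposition $\dot{\ell}_{\theta_0,\eta} = \dot{\ell}_{\theta_0,\eta_0} + (\dot{\ell}_{\theta_0,\eta}-\dot{\ell}_{\theta_0,\eta_0})$ is a genuinely different route from the paper's, and it creates an obstacle that the paper simply avoids. For the fixed-$\eta_0$ piece you correctly identify that without $\tau\ge 2\gamma_1$ the summands $x_{ij}\dot{\ell}_{\eta_0}(\epsilon_i)$ are only sub-Weibull of some index $\alpha<1$, so a raw Orlicz maximal inequality over $p$ coordinates yields $(\log p)^{1/\alpha}\gg\log p$; your intermediate expression $\sqrt{\log p}\cdot(\log p)^{1/\alpha-1/2}\cdot\text{polylog}$ is exactly $(\log p)^{1/\alpha}\cdot\text{polylog}$, and this cannot be absorbed into a constant. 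The parenthetical fixes you mention (exploiting centering and the i.i.d.\ sum structure via a Bernstein/Fuk--Nagaev-type bound, where the Gaussian bulk of the tail dominates the union-bound threshold) are indeed what is required, but they are left as a sketch, and they are the crux of the piece you split off. There is also a smaller slip in the remainder piece: you invoke Lemma~\ref{lemma:score2}, whose hypothesis is $(s_n\log p)^{1+15/a_2}=o(n^{1-\zeta})$, but Lemma~\ref{lemma:Gnelldot} only assumes $(s_n\log p)^{1+8/a_2}=o(n^{1-\zeta})$; the correct reference here is Lemma~\ref{lemma:score}, whose extra factor $(s_n\log p)^{16/(5a_2)}$ is exactly what makes the stated hypothesis the right one.

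The paper sidesteps the sub-Weibull problem entirely by \emph{not} splitting: it works with the single class $\calF_n=\{e_j^T\dot{\ell}_{\theta_0,\eta}(\log p)^{-1/2}:1\le j\le p,\ \eta\in\calH_n^*\}$, observes that $|x_{ij}|\le M\sqrt{\log p}$ makes $F_n(x,y)=\sup_{\eta}|\dot{\ell}_\eta(y)-\dot{\ell}_{\eta_0}(y)|+|\dot{\ell}_{\eta_0}(y)|$ an envelope with $\|F_n\|_n\lesssim1$ (using Lemma~\ref{lemma:score} for the first summand and finite Fisher information for the second), and then bounds the bracketing entropy by $\log N^n_{[\,]}(\epsilon,\calF_n)\lesssim\epsilon^{-1}+\log p$ via Corollary~2.7.4 of van der Vaart and Wellner (1996). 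Feeding this into Corollary~A.1 of Chae et al.\ gives $\int_0^{O(1)}\sqrt{\epsilon^{-1}+\log p}\,d\epsilon\cdot\sqrt{\log p}\lesssim\log p$ directly. The $p$ coordinates then cost only a single $\sqrt{\log p}$ through the entropy, not $(\log p)^{1/\alpha}$, because the maximal-inequality chaining sees a class with an $O(1)$ envelope rather than $p$ heavy-tailed random variables. If you want to keep your decomposition, you must actually carry out the Bernstein-type argument for the first piece; otherwise, adopting the unified bracketing argument is the cleaner path.
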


\begin{proof}
	Without loss of generality, we assume that $\theta_0=0$. 
	Define 
	$$\calF_n := \left\{ e_j^T \dot{\ell}_{\theta_0,\eta} \, (\log p)^{-\frac{1}{2}}  : 1\le j \le p,\,\, \eta\in \calH_n^* \right\},$$ where $e_j$ is the $j$th unit vector in $\bbR^p$. Then,
	\bea
	\sup_{\eta \in \calH_n^*}\|\bbG_n \dot{\ell}_{\theta_0,\eta} \|_\infty &=& \sup_{f \in \calF_n} |\bbG_n f| \sqrt{\log p}.
	\eea
	We first show that $F_n(x,y) :=\sup_{\eta\in \calH_n^*} |\dot{\ell}_\eta(y)- \dot{\ell}_{\eta_0}(y) | + |\dot{\ell}_{\eta_0}(y)|$ is an envelop function of $\calF_n$ and $\Eaa F_n^2(x_i, Y_i) \lesssim 1$ for any $i=1,\ldots,n$.	
	Note that for any $f\in \calF_n$ and $x=(x_1,\ldots,x_p)^T$,    
	\bea
	|f(x,y)| &=& \left| e_j^T \dot{\ell}_{\theta_0,\eta}(x,y)  \right| (\log p)^{-\frac{1}{2}} \\
	&=& \left| x_j \cdot \dot{\ell}_\eta(y) \right|  (\log p)^{-\frac{1}{2}} \\
	&\lesssim&  \sup_{\eta\in \calH_n^*} |\dot{\ell}_\eta(y)- \dot{\ell}_{\eta_0}(y) | + |\dot{\ell}_{\eta_0}(y)|  .
	\eea
	By Lemma \ref{lemma:score}, we have $\Eaa F_n^2(x_i, Y_i) \lesssim 1$ if $(s_n\log p)^{1+\frac{8}{a_2}} =O(n^{1-\zeta})$ for some $\zeta>0$. 
	Then, we have
	\bea
	\bbE_{\theta_0,\eta_0} \left(\sup_{\eta \in\calH_n^*} \|\bbG_n \dot{\ell}_{\theta_0,\eta}\|_\infty  \right) 
	&\lesssim& \int_0^{\|F_n\|_n} \sqrt{\log N_{[\,]}^n(\epsilon, \calF_n )} \, d\epsilon \, \sqrt{\log p}\\
	&\lesssim& \int_0^{\|F_n\|_n} \sqrt{\epsilon^{-1} + \log p } \, d\epsilon \, \sqrt{\log p} \,\,\lesssim\,\, {\log p},
	\eea  
	where the second inequality follows from Corollary 2.7.4 of \cite{vaart1996weak}.  \hfill $\blacksquare$
\end{proof}


%
%

\bibliographystyle{spbasic}      
\bibliography{bayes-HDLR-beyond-SG}   


\end{document}